\documentclass[11pt,reqno]{amsart}

\usepackage{amsmath, wrapfig}
\usepackage{dsfont, a4wide, amsthm, amssymb, amsfonts, graphicx}
\usepackage{fancyhdr, xspace, psfrag, setspace, supertabular, color}
\usepackage[colorlinks]{hyperref}
\usepackage{bbm}
\usepackage{stmaryrd}
\usepackage{txfonts}
\usepackage{enumerate}
\usepackage[utf8]{inputenc}
\usepackage[T1]{fontenc}

\newtheorem{theorem}{Theorem}[section]

\newtheorem{lemma}[theorem]{Lemma}
\newtheorem*{claim1*}{Claim 1}
\newtheorem*{claim2*}{Claim 2}
\newtheorem*{definition*}{Definition}
\newtheorem{corollary}[theorem]{Corollary}

\newtheorem{definition}[theorem]{Definition}

\newtheorem{thm}{Theorem}[section]

\newtheorem{lem}[thm]{Lemma}

\newtheorem{cor}[thm]{Corollary}

\theoremstyle{definition}

\newtheorem{rk}[thm]{\bf Remark}

\DeclareMathOperator{\disc}{d}

\def\e{\epsilon}

\def\Z{\mathbb{Z}}
\def\R{\mathbb{R}}

\def\a{\alpha}
\def\be{\beta}

\def\d{\delta}

\def\b1{\mathbbm{1}}

\def\F#1{f(#1)}

\def\subset{\subseteq}
\def\supset{\supseteq}
\begin{document}

\title{Product-free subsets of $(0,1)$}

\author[L. Franchi]{Leonardo Franchi}
\address{Department of Pure Mathematics and Mathematical Statistics,
Centre for Mathematical Sciences, Wilberforce Road,
Cambridge CB3 0WA, United Kingdom}
\email{lf511@cam.ac.uk}

\author[W. T. Gowers]{W. T. Gowers}
\address{Coll\`ege de France, 11 place Marcelin Berthelot,
75005 Paris, France and Department of Pure Mathematics and Mathematical Statistics,
Centre for Mathematical Sciences, Wilberforce Road,
Cambridge CB3 0WA, United Kingdom} 
\email{W.T.Gowers@dpmms.cam.ac.uk}

\author[F. Yip]{Fredy Yip}
\address{Department of Pure Mathematics and Mathematical Statistics,
Centre for Mathematical Sciences, Wilberforce Road,
Cambridge CB3 0WA, United Kingdom}
\email{fy276@cam.ac.uk}

\begin{abstract}
The third problem in Ben Green's collection of 100 open problems asks whether an open subset of $(0,1)$ that does not contain $x,y,z$ with $xy=z$ must have measure at most 1/3. We give an affirmative answer to this question. As part of the proof we obtain a result of independent interest that gives a lower bound for the size of the sumset and the difference set of a set of reals in terms not just of its size but also of a parameter that measures how far it is from being an interval.
\end{abstract}

\maketitle

\section{Introduction}

In a remarkable paper from 2014 \cite{EberhardGreenManners2014}, Eberhard, Green and Manners proved that for every $\e>0$ there exists a set $A$ of $n$ integers such that no sum-free subset of $A$ has size more than $(1/3+\e)n$, answering a 1965 question of Erd\H os \cite{Erdos1965}, which was later asked again by several other authors. In the other direction, Erd\H os himself gave a short and elegant argument to show that there must be a sum-free subset of size at least $n/3$, which by being slightly more careful one can improve to $(n+1)/3$. This bound was improved by Bourgain \cite{Bourgain} to $(n+2)/3$, using a much more complicated argument, and in a recent breakthrough Bedert \cite{Bedert} obtained a bound of $n/3+c\log\log n$.

A question that arose naturally in connection with the work of Eberhard, Green and Manners, which was formally asked by Green in his collection of 100 open problems \cite{GreenOpenProblems}, was the following. Let $A$ be an open subset of the open interval $(0,1)$ and suppose that $A$ is \emph{product free}: that is, it is not possible to find $x,y,z\in A$ such that $xy=z$. Must $A$ have measure at most 1/3? An obvious source of open product-free sets is sets of the form $\bigcup_{n=-\infty}^\infty(\a^{3n+1},\a^{3n+2})$, where $\a$ is a constant greater than 1. Choosing $\a$ to be close to 1 and intersecting such a set with $(0,1)$, one obtains product-free open subsets of $(0,1)$ with measure arbitrarily close to 1/3, so an upper bound of 1/3 would, if true, be best possible. 

Heuristically, the reason the question is related is that if the answer is yes, then we can change variables by setting $y=-\log x$ (as we shall at the beginning of the next section), which converts the problem into showing that the exponential distribution on $\R$ has the property that every sum-free subset of $\R$ has measure at most $1/3$. Thus, a positive answer to Green's question can be seen as answering a continuous version of Erd\H os's question, and indeed continuous measures play an important role in \cite{EberhardGreenManners2014}.

The main result of this paper gives an affirmative answer to Green's question. We write $|A|$ for the Lebesgue measure of $A$.

\begin{theorem} \label{main1} Let $A$ be an open product-free subset of $(0,1)$. Then $|A|< 1/3$.
\end{theorem}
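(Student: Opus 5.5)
We follow the reduction sketched in the introduction: substitute $x=e^{-t}$. An open product-free $A\subseteq(0,1)$ becomes an open set $B\subseteq(0,\infty)$ with no $s,t,u\in B$ satisfying $s+t=u$, so $B$ is sum-free. The measure is $|A|=\int_B e^{-t}\,dt=\mu(B)$, where $\mu$ is the exponential distribution. We must show that $\mu(B)<1/3$ for every open sum-free $B\subseteq(0,\infty)$.

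We may assume $B$ is a finite union of intervals. Indeed, $B$ is a countable union of intervals, and any finite subunion is still sum-free; if we can prove $\mu\le 1/3-\eta$ with some uniform margin, or an exact bound for the finite case, we can then pass to the limit. Strictness needs care here, and we handle it by a separate argument at the end.

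Let $m=\inf B$. Then $B+m$ and $B$ are "almost" disjoint: if $b\in B$ and $b+m'\in B$ with $m'\in B$ close to $m$, we get a violation. So $B\cap(B+B)=\emptyset$, and the task is to bound the exponential measure of $B$ given that $B+B$ lies in the complement of $B$. The natural strategy is a layer-cake or scaling argument. Write $F(t)=|B\cap(0,t)|$. The exponential weight gives
\[
\mu(B)=\int_0^\infty e^{-t}F(t)\,dt.
\]
It would therefore suffice to show the average inequality $\int_0^\infty e^{-t}(F(t)-t/3)\,dt<0$. Pointwise $F(t)\le t/3$ is false: $B$ could be a single interval near $0$, for instance $(a,2a)$ with $F$ large relative to $t$ for small $t$. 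So the argument must be genuinely averaged.

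The key input is a sumset estimate. For $B_t=B\cap(0,t)$, we have $(B_t+B_t)\cap(0,t)$ disjoint from $B_t$. We also use difference sets: if $b<b'$ are both in $B$, then $b'-b\notin B$, so $(B_t-B_t)\cap(0,t)$ is disjoint from $B_t$. The one-dimensional Brunn–Minkowski inequality gives $|B_t+B_t|\ge 2|B_t|$, but only part of $B_t+B_t$ lies below $t$. This is where the result announced in the abstract enters. That result is a lower bound for $|X+X|$ and $|X-X|$ in terms of $|X|$ and a parameter measuring how far $X$ is from an interval, for example $\mathrm{diam}(X)-|X|$. Combining sums and differences of $B_t$ inside $(0,t)$, we aim for a differential-type inequality of the form
\[
F(t)+\bigl|(B_t\pm B_t)\cap(0,t)\bigr|\le t,
\]
with the sumset lower bound expressed in terms of $F(t)$, $\inf B$, and the gap structure. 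We would then integrate this against $e^{-t}$, choosing the combination so that the extremal configurations, namely geometric-progression-like unions $\bigcup(\alpha^{3n+1},\alpha^{3n+2})$ after the log transform, give equality only in a limit. The main obstacle is exactly this step. Near intervals the plain $2|X|$ bound is tight but wastes the part above $t$, while far from intervals one needs the stronger bound. Balancing the two regimes under the exponential weight is the heart of the matter. I would first try to prove $\int e^{-t}F(t)\,dt\le\frac13$ by splitting at $t=3m$, where $m=\inf B$, and treating $B\cap(m,2m)$ separately. Note that $B\cap(2m,\ldots)$ must avoid $B+m$.

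Strictness comes last. For an open $B$ containing an interval, equality would force $F(t)=t/3$ asymptotically in the extremal regime. That is impossible for a set with a genuine interval component at scale $m>0$, since the near-interval case of the stability sumset bound then gives a definite gain.
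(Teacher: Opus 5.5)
There is a genuine gap. What you have written is a strategy, not a proof, and it stops at exactly the step that carries all of the difficulty; you say so yourself (``the main obstacle is exactly this step''). Moreover, the route you sketch is known to fall short of $1/3$.

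Suppose you bound $|(B_t-B_t)\cap(0,t)|$ from below by a sumset estimate and integrate the resulting pointwise bound on $F(t)$. The natural outcome is $F(t)\le (t+d(B_t))/3$, where $d(X)=\sup_I(2|X\cap I|-|I|)$ is the discrepancy of Theorem~\ref{main2}, and $d(B_t)\le m$ by Lemma~\ref{discrepancy observation}. (The parameter $\mathrm{diam}(X)-|X|$ you suggest is the wrong one: Freiman $3k-4$-type bounds cap at $3|X|$ and lead only to bounds like $F(t)\le 2t/5$.) Integrating gives $\frac13+\frac13\int_0^\infty \mathrm{e}^{-t}d(B_t)\,\mathrm{d}t$. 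Adding the kind of ad hoc information on $[m,3m]$ that you propose (splitting at $3m$ and using $B\cap(B+m)=\emptyset$) only gets you to about $0.369$, or roughly $0.3415$ with more work. Bounding $F$ one value of $t$ at a time is the limitation. You correctly sense that the argument must be ``genuinely averaged'', but you never say how.

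The missing idea is a shifted three-term inequality. Reduce to $B$ a finite union of closed intervals with $\min B=m>0$. Then
\[
F(u-m)+F(u)+F(u+m)\le u\qquad\text{for all }u .
\]
This is Lemma~\ref{key lemma} after rescaling, and it is sharp for $m\bigl([1,2)\cup[4,5)\cup[7,8)\cup\cdots\bigr)$. Integrate against $\mathrm{e}^{-u}$, using that $F=0$ on $[0,m]$. This gives $(\mathrm{e}^{-m}+1+\mathrm{e}^{m})\int_0^\infty \mathrm{e}^{-u}F(u)\,\mathrm{d}u\le\int_0^\infty u\mathrm{e}^{-u}\,\mathrm{d}u=1$, that is, $\mu(B)\le 1/(1+2\cosh m)<1/3$.

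Strictness then comes for free. A nonempty open sum-free set has positive infimum, which gives a uniform margin over all its closed approximations. Your stability-based strictness argument (``equality would force $F(t)=t/3$ asymptotically'') is unsupported as written, and it is unnecessary.

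The symmetric term $F(u+m)$ is essential. The two-term inequality $F(u-m)+2F(u)\le u$ of Lemma~\ref{weaker inequality} is what Theorem~\ref{main2} and Lemma~\ref{discrepancy observation} give directly. It integrates only to $\mu(B)\le 1/(2+\mathrm{e}^{-m})$, which exceeds $1/3$.

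Proving the three-term inequality is the real content of the result. It is a long rigidity argument by contradiction:
\begin{enumerate}
\item pick a point $x$ where the inequality fails with $x\pm m\in B$;
\item take a discrepancy-maximizing interval of $B\cap[0,x+m]$;
\item use the balanced-interval machinery (Lemma~\ref{main step}, Corollary~\ref{key bound}, Theorem~\ref{offdiagonal main2}) to force successive gaps that mimic the period-3 extremal set, until the constraints become incompatible.
\end{enumerate}
Your proposal neither states this inequality nor offers a substitute for it.
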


\noindent One of the main tools we use to prove this theorem is a second theorem, about the size of the sumset $A+A$ or the difference set $A-A$ when $A$ is a Borel measurable set of real numbers, the statement of which is itself closely related to various lemmas from the paper of Eberhard, Green and Manners. By Kneser's theorem \cite{Kneser1956} or the 1-dimensional case of the Brunn-Minkowski inequality (see for example \cite{Gardner2002}) we have that $|A-A|\geq 2|A|$. For compact sets, this bound is sharp if and only if $A$ is an interval, so one can hope to try to obtain a more precise result by quantifying appropriately the extent to which $A$ fails to be an interval. To this end, we define the \emph{positive interval discrepancy}, or just \emph{discrepancy}, of a set $A$ by the formula
\[d(A)=\sup_I \big(2|A\cap I|-|I|\big),\]
where the supremum is over all intervals $I$. Note that $2|A\cap I|-|I|=|A\cap I|-|I\setminus A|$, so this measures how much more $A$ can intersect an interval than $A^c$. Usually we shall consider compact sets $A$, so the supremum above will be a maximum. Indeed, nothing much is lost by taking $A$ to be a union of finitely many closed intervals, which we do for most of the paper, in which case the maximizing $I$ will always join the beginning of one of those intervals to the end of another. 

Our second theorem, which we shall in fact prove first, is the following.

\begin{theorem} \label{main2}
Let $A$ be a Borel measurable set of real numbers with finite measure. Then $|A+A|$ and $|A-A|$ are both at least $4|A|-2d(A)$.
\end{theorem}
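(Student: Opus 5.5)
The plan is to prove a more flexible two-set version and deduce both bounds from it. For Borel sets $A,B$ of finite measure I would aim to show
\[|A+B|\ \ge\ 2|A|+2|B|-d(A)-d(B).\]
Taking $B=A$ gives the sumset bound. Since $d(-A)=d(A)$ (reflect the interval $I$) and $|-A|=|A|$, taking $B=-A$ gives the difference-set bound.

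A useful preliminary observation is that the inequality is trivial when $d(A)+d(B)\ge |A|+|B|$. Indeed, in that case the right-hand side is at most $|A|+|B|$, which is below the Brunn--Minkowski bound $|A+B|\ge|A|+|B|$. So the real content concerns sets whose discrepancy is smaller than their measure, that is, sets far from being intervals.

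\emph{Reduction to finite unions of intervals.} By inner regularity choose compact $K\subset A$ with $|A\setminus K|<\e$. Since $K\subset A$ we have $|K\cap I|\le|A\cap I|$ for every $I$, so $d(K)\le d(A)$, and also $K+K\subset A+A$. Next cover $K$ by a finite union $C$ of closed intervals with $|C\setminus K|<\e$, taking $C$ inside the $\eta$-neighbourhood of $K$. Then $d(C)\le d(K)+2\e$, and $C+C$ lies in the $2\eta$-neighbourhood of $K+K$, whose measure tends to $|K+K|$ as $\eta\to 0$ because $K+K$ is compact. The same construction applies to $B$. Letting $\e,\eta\to0$ reduces the statement to finite unions of closed intervals.

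\emph{Induction on the total number of intervals.} If $A$ and $B$ are single intervals then $d(A)=|A|$, $d(B)=|B|$, and the claim is Brunn--Minkowski. In general, write $A=A'\cup J$, where $J=[a,a+j]$ is the rightmost component of $A$ and $A'$ lies to the left with gap $g$ before $J$ (or do the same with $B$, whichever is more convenient). Let $t=\max A'+\max B$. Then $A+B$ contains $A'+B$ together with the set $(J+B)\cap(t,\infty)$, which is disjoint from $A'+B$. I would lower-bound the measure of this new part by comparing $J+B$ with the top portion of $B$: it contains $J+K$ for the top component $K$ of $B$, shifted to start $g$ above $t$. I would also use more of $B$ when the top part of $B$ is dense, since then $J+(B\cap[\max B-s,\max B])$ covers an interval of length roughly $j+|B\cap[\max B-s,\max B]|$. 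Combining this with the induction hypothesis for $(A',B)$ leaves a needed inequality of the form
\[\text{(new measure)}\ \ge\ 2j-\big(d(A)-d(A')\big).\]

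\emph{Main obstacle.} The difficulty is controlling how $d$ changes when $J$ is added. If the maximising interval for $d(A)$ does not meet $J$, then $d(A)=d(A')$ and the full $2j$ must be found in the new part, which the estimate $j+|K|$ only supplies when $|K|\ge j$. When it fails, $d$ can increase: applying the definition of $d$ to the interval from the start of the maximiser of $d(A')$ to the end of $J$ gives $d(A)\ge d(A')+2j-g'$, where $g'$ measures the complement in between. So I expect a case split according to whether $j$ exceeds the size of the top of $B$, and whether to peel an interval off $A$ or off $B$; choosing the component with the smaller length at the extreme should make one of the cases close. If this greedy induction does not close, my fallback is a continuous ``prefix'' argument. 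Sweep a threshold $t$, set $\alpha(t)=|A\cap(-\infty,t]|$ and $\beta(s)=|B\cap(-\infty,s]|$, and bound $|(A+B)\cap(-\infty,u]|$ from below by $\sup_{t+s=u}$ of Brunn--Minkowski-type quantities. Integrating this would produce the bound, with the terms $2|A\cap I|-|I|$ arising exactly where the sweep passes through a dense window of $A$.
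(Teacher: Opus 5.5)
\textbf{The two-set inequality you set out to prove is false.} Your reduction to finite unions of closed intervals is fine. So is passing from a two-set inequality to both conclusions via $B=A$ and $B=-A$ (using $d(-A)=d(A)$). The problem is the intermediate claim itself: $|A+B|\ge 2|A|+2|B|-d(A)-d(B)$ does not hold for arbitrary $A,B$.

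Take $A=[0,\epsilon]$ with $0<\epsilon<1$ and $B=[0,1]\cup[2,3]$. Then $d(A)=\epsilon$ and $d(B)=1$ (attained by $[0,1]$ and by $[0,3]$). We get $|A+B|=|[0,1+\epsilon]\cup[2,3+\epsilon]|=2+2\epsilon$, while your right-hand side is $3+\epsilon$. Letting $\epsilon\to 0$ shows that your inequality would force $d(B)\ge |B|$, i.e.\ every $B$ would be an interval up to a null set. A less degenerate failure is $A=[0,1]\cup[2,3]$, $B=[0,4]\cup[8,12]$, where $|A+B|=14<15$.

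The mechanism is that a set of small diameter cannot fill a long gap in the other set. This is exactly the obstacle you ran into. The inequality ``new measure $\ge 2j-(d(A)-d(A'))$'' that your peeling step needs is not merely hard to verify; it is false in general. So no case split can close the induction, and a sweeping argument aimed at the same statement will fail for the same reason.

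\textbf{What is missing.} You need a hypothesis linking $d(A)$ and $d(B)$ that holds for $B=\pm A$ and survives the inductive step. The paper proves the two-set bound only when $d(A)=d(B)$. The factor $2$ comes from a covering lemma, which your estimate via $J+K$ (giving only about $j+|K|$) cannot supply. The lemma says: suppose $I$ is left balanced for $A$ (every left subinterval $K$ of $I$ has $2|A\cap K|-|K|\ge 0$), $J=[c,d]$ is left balanced for $B$, and $2|A\cap I|-|I|\ge d(B)$. Then $(A\cap I)+(B\cap J)$ contains the whole interval $[\min I+c,\ \min I+c+2|B\cap J|]$.

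The paper then decomposes both sets into maximal balanced blocks and splits into two cases.
\begin{enumerate}
\item If the extreme blocks at both ends of both sets all have discrepancy $d(A)=d(B)$, apply the lemma at the left end (summed over the maximal left-balanced blocks of $B$) and at the right end. This yields portions of $A+B$ of measure $2|B|$ and $2|A|$ that overlap in length at most $d_R(B)+d_L(A)$.
\item Otherwise, after symmetries, the first block of $B$ has discrepancy below $d(A)$. Remove the maximal left-balanced interval $J$ starting at $\min B$. Then $d(B\setminus J)=d(B)$, so the hypothesis is preserved. The covering lemma gives a fresh interval of length $2|B\cap J|$ at the far left of $A+B$, and one applies induction to $(A,B\setminus J)$.
\end{enumerate}

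Without equal discrepancies, one only gets $|A+B|\ge (1+d(B)/d(A))|A|+2|B|-2d(B)$ when $d(A)\ge d(B)$. This bound is consistent with both counterexamples above (with equality in the first).
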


To get a feel for this theorem, it is helpful to see that there is a large class of equality cases. If $A$ is itself an interval, then $d(A)=|A|$, since the maximizing interval $I$ is easily seen to be $A$ itself, and therefore $4|A|-2d(A)=2|A|=|A+A|$. 

Another important equality case is a set of the form
\[A=[0,t]\cup[1,1+t]\cup\dots\cup[n-1,n-1+t],\]
for some $t\in(0,1)$, which has measure $tn$. (More generally, one can of course take a set $P+[0,t]$ where $P$ is a finite arithmetic progression in $\R$ and $t$ is a positive real that is smaller than the common difference of $P$.) Then
\[A+A=[0,2t]\cup[1,1+2t]\cup\dots\cup[2(n-1),2(n-1)+2t].\]
If $t\leq 1/2$, then $[0,t]$ is a maximizing interval, so $d(A)=t$, and
\[|A+A|=2t(2n-1)=4tn-2t=4|A|-2d(A).\]
If $t>1/2$, then the unique maximizing interval is $[0,n-1+t]$, and $d(A)=2nt-(n-1+t)=n(2t-1)+1-t$. Therefore,
\[|A+A|=2(n-1)+2t=4nt-n(4t-2)-2(1-t)=4|A|-2d(A)\]
in this case as well.

This does not by any means exhaust the set of sharp examples. For instance, let $A=I\setminus E$, where $I$ is an interval. There are many sets $E$ that satisfy the following two conditions -- they just need to be fairly small, particularly near the edges of $I$. 
\begin{enumerate} 
\item The maximizing interval for $A$ is $I$.
\item $A+A=I+I$.
\end{enumerate}
If $E$ has these two properties, then $d(A)=2|A|-|I|$, so once again we have 
\[|A+A|=2|I|=4|A|-2d(A).\]

The proof we shall give of Theorem \ref{main2} is built up via a sequence of lemmas that provide a toolbox that is also used for proving Theorem \ref{main1}. The argument is elementary, in the sense that no use is made of tools such as Fourier analysis or regularity lemmas. 
\medskip

It is instructive to reformulate Theorem \ref{main2} in terms of the size of the symmetric difference of $A$ and an interval. If $A$ is a measurable set and $I$ is an interval, then 
\[2|A\cap I|-|I|=|A\cap I|-|I\setminus A|=|A|-|A\setminus I|-|I\setminus A|=|A|-|A\triangle I|.\]
Therefore, Theorem \ref{main2} is equivalent (if $A$ is compact) to the assertion that if $|A+A|\leq C|A|$ or $|A-A|\leq C|A|$, then there exists an interval $I$ such that 
\[2(|A|-|A\triangle I|)\geq (4-C)|A|,\]
or in other words that
\[|A\triangle I|\leq(C/2-1)|A|.\]
This is a non-trivial statement for every $C<4$. When $C=2$ it tells us that if $|A+A|\leq 2|A|$, then there is an interval $I$ with $|A\triangle I|=0$. This quickly implies that $A\subset I$, and therefore yields the familiar fact that $A$ is equal to $[\min A,\max A]$ up to a set of measure zero. When $C$ is close to 2, our result yields a stability version of this statement, which is also well known, see, for example, \cite[Corollary~3.1]{Ruzsa1991}.

For comparison, the regime $C<3$ lies in the sharp one-dimensional inverse theory for small sum-sets, and may be viewed as the symmetric, or diagonal, version of the stability theory for the one-dimensional Brunn-Minkowski inequality. Equivalently, it is the continuous counterpart of Freiman's $3k-4$ theorem \cite{Freiman1959}. A particularly convenient formulation of this diagonal statement is Theorem~1.1 of Figalli and Jerison \cite{FigalliJerison2015}, which asserts that for every Borel measurable set $A\subset\mathbb R$,
\[
    |A+A|-2|A|
    \geq
    \min\bigl\{
        |\operatorname{co}(A)\setminus A|,
        |A|
    \bigr\}.
\]
Equivalently, if $C<3$ it implies that $A$ is contained in an interval $J$ of length at most $(C-1)|A|$. 

This statement is already implicit in the earlier work of Ruzsa
\cite{Ruzsa1991}, whose exact two-set estimates contain the diagonal
result above as a special case. Figalli and Jerison point out that the
one-dimensional statement may also be deduced from Freiman's theorem,
and give a self-contained proof. The corresponding inverse theory for
two different sets was later developed further by de Roton
\cite{deRoton2018}, who proved a full continuous analogue of the
$3k-4$ theorem, together with additional structural information on the
sumset. The diagonal
inequality above also follows from Theorem \ref{main2}, by an
amplification argument. Since the proof is slightly less simple than the arguments given in this introduction, we defer it to the end of Section~3. 

If $C=4-\e$, we can conclude that there is an interval $I$ such that $2|A\cap I|-|I|\geq\frac{\e}{2}|A|$, which implies that $|I|\geq\frac{\e}{2}|A|$ and \[\frac{|A\cap I|}{|I|}\geq\frac 12+\frac{\e|A|}{4|I|}.\]
This is a sharp version of Theorem 6.2 of \cite{EberhardGreenManners2014}. If $A$ is the set $\{0,1,2,\dots,k-1\}+[0,1/2]$, then $|A|=k/2$ and $|A+A|=2k-1=(4-2/k)|A|$. If we take $I$ to be the interval $[0,(j-1)+1/2]$ for some $j<k$, then we have that 
\[\frac{|A\cap I|}{|I|}=\frac{j/2}{j-1/2}=\frac j{2j-1}=\frac 12+\frac 1{2(2j-1)}=\frac 12+\frac 1{4|I|}.\]
Since $\e|A|=1$ for this particular choice of $A$, this example demonstrates the sharpness (at least for an infinite sequence of $\e$ tending to zero). 
\medskip

The following two observations lead to further consequences of Theorem \ref{main2}. 

\begin{lemma} \label{discrepancy observation}
Let $A$ be a set of non-negative real numbers that is measurable  with finite measure, let $\d>0$, and suppose that $A\cap(A-\d)=\emptyset$. Then $d(A)\leq\d$.
\end{lemma}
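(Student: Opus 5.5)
We must show that for every interval $I$ we have $2|A\cap I|-|I|\le\d$; taking the supremum over $I$ then gives $d(A)\le\d$. The plan is to fix an interval $I=[a,b]$ of length $L=b-a$ (openness or closedness of the endpoints does not affect the measures involved), cut it into pieces of length $\d$, and use the hypothesis $A\cap(A-\d)=\emptyset$ to show that $A$ occupies at most half of each consecutive pair of such pieces.

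The key observation is the following. The hypothesis says that if $x\in A$ then $x+\d\notin A$. So for any set $J$, the sets $A\cap J$ and $(A\cap(J+\d))-\d$ are disjoint subsets of $J$. In particular, for a half-open interval $J$ of length $\d$, the sets $A\cap J$ and $(A-\d)\cap J$ are disjoint subsets of $J$, which gives
\[
|A\cap J|+|A\cap(J+\d)|\le\d .
\]
Equivalently, $A$ occupies measure at most $\d$ of any interval $[x,x+2\d)$.

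Next, write $L=2k\d+r$ with $k\ge0$ an integer and $0\le r<2\d$. Decompose $I$ into $k$ consecutive intervals of length $2\d$ followed by a remainder interval $R$ of length $r$. The displayed inequality gives at most $\d$ of $A$ in each of the $k$ blocks of length $2\d$. For the remainder, we have the trivial bound $|A\cap R|\le r$, and if $r>\d$ we can do better: split $R$ into an initial piece of length $r-\d$ followed by a piece of length $\d$, apply the same disjointness argument on $R$ (or note $R\subset[x,x+2\d)$), and get $|A\cap R|\le\d$. Hence $|A\cap R|\le\min(r,\d)$ in all cases, and therefore
\[
2|A\cap I|-|I|\le 2k\d+2\min(r,\d)-2k\d-r=2\min(r,\d)-r\le \d ,
\]
since $2\min(r,\d)-r$ equals $r\le\d$ when $r\le\d$, and equals $2\d-r<\d$ when $r>\d$.

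No step is a real obstacle. The only care needed is the remainder term; the non-negativity and finite-measure assumptions on $A$ are not needed for this bound beyond ensuring that the measures involved are finite.
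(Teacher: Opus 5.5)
Your proof is correct, but it follows a different route from the paper's. The paper uses one global translation. For $I=[a,b]$, the sets $A\cap I$ and $(A\cap I)-\d$ are disjoint, each has measure $|A\cap I|$, and both lie in $[a-\d,b]$. So $2|A\cap I|\le |I|+\d$ follows immediately, with no decomposition or case analysis. The same picture also gives, via connectedness of $[a-\d,b]$, the strict inequality for closed $A$ that the paper records for later use.

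You instead extract only a local consequence of the hypothesis: every window of length $2\d$ contains at most $\d$ of $A$. You then sum over a block decomposition and bound the remainder by $\min(r,\d)$ rather than by its length. This is essentially the argument the paper gives for Lemma~\ref{discrepancy observation2}, but with a sharper remainder estimate. What it buys is generality: it shows $d(A)\le\d$ under the window condition alone, which is strictly weaker than $A\cap(A-\d)=\emptyset$.

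Run with $\d/2$ in place of $\d$, the same refinement even improves the conclusion of Lemma~\ref{discrepancy observation2} from $d(A)\le\d$ to $d(A)\le\d/2$. This is sharp for $A=[0,\d/2]$.
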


\begin{proof}
Let $I=[a,b]$ and let $A'=A\cap I$. Then $A'$ and $A'-\d$ are disjoint sets of measure $|A\cap I|$ and $A'\cup(A'-\d)\subset[a-\d,b]$, so $2|A\cap I|\leq b-a+\d=|I|+\d$, from which it follows that $d(A)\leq\d$, as stated. 
\end{proof}
\noindent For later use we remark that if $A$ is closed, then the inequality is strict. Indeed, the interval $[a-\d,b]$ is connected and is not contained in either of the closed sets $A'$ or $A'-\d$, so it cannot be the union of those two sets. The complement of that union is therefore an open subset of $[a-\d,b]$, so it does not have full measure, so equality does not occur.

\begin{lemma} \label{discrepancy observation2}
Let $\d>0$ and let $A$ be a measurable set of real numbers with the property that $|A\cap I|\leq\d/2$ for every interval $I$ of length $\d$. Then $d(A)\leq\d$.
\end{lemma}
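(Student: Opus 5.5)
Fix an arbitrary interval $I$; the plan is to show $2|A\cap I|-|I|\leq\d$ directly. Write $|I|=L$ and $L=k\d+r$ with $k\geq 0$ an integer and $0\leq r<\d$. Cut $I$ from its left endpoint into $k$ consecutive intervals of length $\d$, followed by one leftover interval $J$ of length $r$. The hypothesis bounds $A$ on each full-length piece by $\d/2$. On $J$, enlarge it to an interval of length $\d$ (the length-$\d$ interval starting at its left endpoint, say) and apply the hypothesis again; this gives $|A\cap J|\leq \min(r,\d/2)$. Summing the pieces,
\[
|A\cap I|\leq k\d/2+\min(r,\d/2),
\]
and therefore
\[
2|A\cap I|-|I|\leq k\d+2\min(r,\d/2)-k\d-r=\min(r,\d-r)\leq \d/2\leq\d.
\]
Taking the supremum over $I$ gives $d(A)\leq\d$; in fact the argument yields the stronger bound $d(A)\leq\d/2$.

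The only place needing any care is the leftover interval $J$. There we use both the trivial bound $|A\cap J|\leq |J|=r$ and the hypothesis applied to a length-$\d$ interval containing $J$, and take the minimum of the two. If $k=0$, i.e.\ $L<\d$, the same two bounds apply directly to $I$ and the same computation goes through.

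Open versus closed endpoints are irrelevant, since single points have measure zero. Intervals of infinite length need not be considered separately: $A$ has measure at most $\d/2$ per unit of length $\d$, so $2|A\cap I|-|I|\to-\infty$ along long intervals, and one can restrict to bounded $I$. No step here is a real obstacle.
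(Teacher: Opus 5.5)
Your proof is correct and follows the same route as the paper: split $I$ into a maximal part $K$ whose length is a multiple of $\d$ plus a remainder of length less than $\d$, and apply the hypothesis to the length-$\d$ pieces of $K$. The only difference is that the paper bounds $A$ on the remainder trivially by its length, giving $2|A\cap I|-|I|\le|I\setminus K|\le\d$, whereas your extra use of the hypothesis on the remainder gives the sharper bound $d(A)\le\d/2$; this bound is attained for $A=[0,\d/2]$, and it would improve the second alternative of Corollary~\ref{consequences} to $4|A|-\d$.
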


\begin{proof}
    Let $J$ be an arbitrary interval and let $K$ be a maximal subinterval of $J$ with length a multiple of $\d$. Then 
    \[|A\cap J|\leq|A\cap K|+|J\setminus K|\leq\frac 12|K|+|J\setminus K|.\]
    It follows that
    \[2|A\cap J|-|J|\leq|K|+2|J\setminus K|-|J|=|J\setminus K|\leq \d.\qedhere\]
\end{proof}

Combining Theorem \ref{main2} with Lemmas \ref{discrepancy observation} and \ref{discrepancy observation2} we obtain the following immediate consequence.

\begin{corollary} \label{consequences}
Let $A$ be a Borel measurable set of real numbers with finite measure, let $\d>0$, and suppose either that $A\cap(A-\d)=\emptyset$ or that $|A\cap I|\leq\d/2$ for every interval $I$ of length $\d$. Then $|A+A|$ and $|A-A|$ are both at least $4|A|-2\d$. \hfill $\square$
\end{corollary}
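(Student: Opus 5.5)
The corollary follows by combining Theorem \ref{main2} with Lemmas \ref{discrepancy observation} and \ref{discrepancy observation2}. The plan is to show $d(A)\le\d$ under either hypothesis. Then Theorem \ref{main2} gives
\[|A+A|,\ |A-A|\ \geq\ 4|A|-2d(A)\ \geq\ 4|A|-2\d.\]
Since Theorem \ref{main2} already applies to Borel sets of finite measure, there are only two things to check: that each hypothesis yields the discrepancy bound, and that the lemmas' side conditions are met.

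\emph{Second hypothesis.} Suppose $|A\cap I|\le\d/2$ for every interval of length $\d$. Then Lemma \ref{discrepancy observation2} gives $d(A)\le\d$ directly, with no further conditions.

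\emph{First hypothesis.} Suppose $A\cap(A-\d)=\emptyset$. Lemma \ref{discrepancy observation} is stated for sets of non-negative reals, so we have to remove that restriction. Its proof never uses non-negativity: for any interval $I=[a,b]$, the sets $A\cap I$ and $(A\cap I)-\d$ are disjoint, have equal measure, and lie in $[a-\d,b]$. This gives $2|A\cap I|\le |I|+\d$ for every $I$. Alternatively, one can reduce to the stated lemma by translation invariance. For a set of finite measure, $d(A)=\lim_{N\to\infty} d(A\cap[-N,\infty))$, because $|A\cap I|$ is continuous under increasing unions. Each truncation $A\cap[-N,\infty)$ still satisfies the disjointness hypothesis, and translating it by $N$ makes it non-negative while preserving both $d$ and the hypothesis. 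Either route gives $d(A)\le\d$.

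Open intervals in the definition of $d$ can be handled the same way, or by noting that endpoints have measure zero. The only real issue is this non-negativity detail; everything else is immediate.
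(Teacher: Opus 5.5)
Your proposal is correct and is essentially the paper's argument: under either hypothesis you get $d(A)\le\delta$ from Lemma~\ref{discrepancy observation} or Lemma~\ref{discrepancy observation2}, and Theorem~\ref{main2} then gives $|A+A|,|A-A|\ge 4|A|-2d(A)\ge 4|A|-2\delta$. You also note that the non-negativity assumption in Lemma~\ref{discrepancy observation} is never used in its proof, and can alternatively be removed by truncating and translating; this correctly settles a small point the paper leaves implicit.
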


The contrapositive of Corollary \ref{consequences} is also worth noting: if $\min\{|A+A|,|A-A|\}<4|A|-2d$, then $d\in A-A$ and there is an interval $I$ of length $d$ with $|A\cap I|>d/2$.

One might hope to prove results such as Corollary \ref{consequences} by combining an inverse theorem with a contradiction argument. That is, one could assume that the sumset or difference set is small, use an inverse theorem to obtain some structure for $A$, and then try to show that this structure is incompatible with either of the hypotheses of the corollary. There are several powerful inverse results around the doubling threshold $4$, including those of Eberhard, Green and Manners \cite{EberhardGreenManners2014}, van Hintum and Keevash \cite{vanHintumKeevash2026}, and Jing and Mudgal \cite{JingMudgal2026}. However, the structural conclusions of these results do not seem to be sufficiently precise to obtain anything close to the bounds above.

\medskip

Theorem \ref{main2} also has a slightly curious consequence for sumsets of sets of integers, which does not seem to be easy to prove directly. 

\begin{corollary}\label{discrete} Let $k$ be a positive integer and let $A\subset\Z$ be a finite set such that $A\cap(A-k)=\emptyset$. Then  \[|A+A+\{0,1\}|\geq 4|A|-2k.\] 
\end{corollary}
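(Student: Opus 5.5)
We pass to the continuous setting by thickening $A$: let $B=A+[0,1)$, i.e.\ the union of the unit intervals $[a,a+1)$ for $a\in A$. Since $A\subset\Z$, these intervals are pairwise disjoint, so $B$ is a Borel set with $|B|=|A|$. The whole argument then consists of checking that $B$ satisfies the hypothesis of Corollary \ref{consequences} with $\d=k$, and that $|B+B|$ is controlled by $|A+A+\{0,1\}|$.

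For the hypothesis, suppose $x\in B\cap(B-k)$. Then $x\in[a,a+1)$ and $x+k\in[a',a'+1)$ for some $a,a'\in A$. Since $k$ is an integer, $x+k\in[a+k,a+k+1)$, and the half-open unit intervals with integer endpoints are disjoint, so $a'=a+k$. This contradicts $A\cap(A-k)=\emptyset$, so $B\cap(B-k)=\emptyset$. Corollary \ref{consequences} (equivalently Theorem \ref{main2} together with Lemma \ref{discrepancy observation}) therefore gives
\[|B+B|\geq 4|B|-2k=4|A|-2k.\]

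For the sumset, we have $B+B=A+A+[0,2)$, which is the union over $c\in A+A$ of $[c,c+1)\cup[c+1,c+2)$. This equals the union of the unit intervals $[m,m+1)$ over $m\in A+A+\{0,1\}$. Those intervals are disjoint, so $|B+B|=|A+A+\{0,1\}|$, and the inequality follows.

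None of these steps is a real obstacle. The only point needing care is using half-open intervals, so that disjointness holds exactly and the measures are computed precisely. Closed intervals would change nothing up to measure zero in any case, since Theorem \ref{main2} only concerns measures.
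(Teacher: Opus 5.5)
Your proof is correct and is essentially the paper's argument: the paper also thickens $A$ (to $A+[0,1]$), bounds the discrepancy by $k$ using Lemma \ref{discrepancy observation} applied to $A+[0,1)$, applies Theorem \ref{main2}, and uses $A+A+[0,2]=A+A+\{0,1\}+[0,1]$. Using half-open intervals throughout, as you do, only changes the bookkeeping: disjointness and the measure computations hold exactly rather than up to null sets.
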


\begin{proof} Applying Theorem \ref{main2} to the set $A'=A+[0,1]$, we obtain that $|A'+A'|\geq 4|A'|-2d(A')$. But $|A'\cap(A'-k)|=0$, from which it follows that $d(A')\leq k$, by Lemma \ref{discrepancy observation} applied to the set $A+[0,1)$). Thus, $|A'+A'|\geq 4|A|-2k$. But $A'+A'=A+A+[0,2]=A+A+\{0,1\}+[0,1]$, and the result follows. 
\end{proof}

If $A=\{0,2k,\dots,2(n-1)k\}+\{0,1,\dots,k-1\}$, then $|A|=nk$ and 
\[A+A+\{0,1\}=\{0,1,\dots,4(n-1)k+2k-1\},\] 
which has size $(4n-2)k=4nk-2k$, so Corollary \ref{discrete} is sharp for this class of examples. 

One can in fact show that Corollary \ref{discrete} is equivalent to Corollary \ref{consequences} with the first condition. The proof of Corollary \ref{discrete} shows this in one direction. To prove the reverse implication, one approximates a measurable set $A'$ by a suitably scaled set of the form $A+[0,1]$ with $A\subset\Z$.

\subsection{Statement about AI use}

The entirety of the proof in this paper was discovered and written up by the authors. Though AI did not contribute any of the ideas, at certain stages of the research process it saved us time by finding counterexamples to statements that might well have been helpful to us had they been true. 

One such counterexample arose from the following observation. Let $I_1,I_2,\dots,I_k$ be a sequence of intervals with $I_r=[a_r,b_r]$ for each $r$, satisfying the following two properties.
\begin{enumerate}
    \item $2b_r\leq a_r+a_{r+1}$ and $b_r+b_{r+1}\leq 2a_{r+1}$ for every $r$.
    \item For every $r\leq s-2$, $I_r+I_s\subset I_{r+1}+I_{s-1}$.
\end{enumerate}
The second condition can be thought of as a kind of convexity property of the set $\bigcup_{r=1}^k\{r\}\times I_r$. If we set $A$ to be $\bigcup_{r=1}^kI_r$, then we have that $|A|=\sum_{r=1}^k|I_r|$ and 
\[|A+A|=2\sum_{r=1}^k|I_r|+\sum_{r=1}^{k-1}(|I_r|+|I_{r+1}|)=4|A|-|I_1|-|I_k|.\]
In the light of this example, it is tempting to conjecture that what matters is not so much what happens to $2|A\cap I|-|I|$ for an arbitrary interval $I$, but what happens for intervals at one or the other end of $A$. More precisely, if we define $d_L(A)$ to be the maximum of $2|A\cap I|-|I|$ over all intervals $[\min A,x]$ and $d_R(A)$ to be the maximum over all intervals $[x,\max A]$, one might conjecture that $|A+A|$ is always at least $4|A|-d_L(A)-d_R(A)$. However, ChatGPT 5.4 Pro provided for us the set $A=[0,1]\cup[4,6]\cup[8,11]$, which satisfies the second condition but not the first, and can easily be checked not to satisfy the inequality. 

\subsection*{Acknowledgements}
L.F. acknowledges support from the Isaac Newton Trust through a Trinity Cambridge Research Studentship. 

\section{From product sets to sumsets}

Let $E\subset(0,1)$ be an open set and let $A=-\log E$ (that is, $A=\{-\log x:x\in E\}$). The condition that $E$ is product free is equivalent to the statement that $A$ is sum free. Writing $\mathbbm 1_X$ for the characteristic function of $X$, we also have, using the substitution $u=-\log x$, that 
\[|E|=\int_0^1\mathbbm 1_E(x)\,\mathrm dx=\int_0^\infty\mathrm e^{-u}\mathbbm 1_A(u)\,\mathrm du.\]
Thus, the question we are addressing is equivalent to asking how large $\int_0^\infty e^{-u}\mathbbm 1_A(u)\,\mathrm du$ can be if $A$ is sum free. 

It will be convenient to work instead with closed sets: since open sets can be approximated by closed sets, any upper bound we find for closed sets implies the same upper bound for open sets. Assume, then, that $A$ is closed, which implies that it has a non-zero minimal element $\d$. Since $A$ is sum free, we have the condition that $A\cap(A-\d)=\emptyset$. Lemma \ref{discrepancy observation} then shows that we have a trade-off: in order to maximize $\int_0^\infty \mathrm e^{-u}\mathbbm 1_A(u)\,\mathrm du$ we would like $\min A$ to be as small as possible, but if $\min A$ is small, then we pay for that with the sumset and difference set of $A$ being large, which, since $A$ is sum free, confines $A$ to a smaller set. 

We write $\F{x}$ for $|A\cap[0,x]|=\int_0^x\mathbbm 1_A(u)\,\mathrm du$. Then integrating by parts gives
\[\int_0^\infty\mathrm e^{-u}\mathbbm 1_A(u)\,\mathrm du=\Big[\mathrm e^{-u}\F{u}\Big]_0^\infty+\int_0^\infty\mathrm e^{-u}\F{u}\,\mathrm du=\int_0^\infty\mathrm e^{-u}\F{u}\,\mathrm du.\]
From the fact that $A$ is sum free, it follows that $\F{u}\leq u/2$, so we obtain immediately the upper bound
\[\int_0^\infty\mathrm e^{-u}\F{u}\,\mathrm du\leq\frac 12\int_0^\infty u\mathrm e^{-u}\,\mathrm du=\frac 12.\]

With the help of Theorem \ref{main2} and Lemma \ref{discrepancy observation}, we can improve on this. Let $A_u=A\cap[0,u]$. Since $A$ is disjoint from $A-A$ and $A_u-A_u$ is a symmetric set contained in $[-u,u]$, then from the fact that $|A_u-A_u|\geq 4|A_u|-2d(A_u)$ it follows that $|(A_u-A_u)\cap[0,u]|\geq 2|A_u|-d(A_u)$ and hence that $|A_u|\leq u-2|A_u|+d(A_u)$, or in other words that $\F{u}\leq(u+d(A_u))/3$. This gives us an upper bound of
\[\frac 13+\frac 13\int_0^\infty\mathrm e^{-u}d(A_u)\,\mathrm du.\]
If $\min A=\d$, then by Lemma \ref{discrepancy observation} we have that $d(A_u)\leq\d$ for every $u$. But we also know that $\F{u}=0$ when $u\leq\d$ and $\F{u}\leq u-\d$ when $\d\leq u\leq 2\d$. When $2\d\leq u\leq 3\d$, we also have that $A\cap[0,u]\subset A\cap[\d,3\d]$, and since $A$ and $A+\d$ are disjoint, we have that $|A\cap[\d,3\d]|\leq\d$, so $\F{u}\leq\d$ for $u\in[2\d,3\d]$. Incorporating these observations and optimizing $\d$ (at approximately $0.251537$) one can obtain an upper bound of approximately $0.36909$. With a couple more observations of a similar kind and a bit of effort one can push the bound down to a little over $0.3415$, but it does not seem possible to reach $1/3$ without a further idea.

The further idea we use, which will occupy the bulk of this paper, is a proof of the following result.

\begin{lemma} \label{key lemma} Let $A$ be a measurable sum-free subset of $\R$ and let $\min A = 1$. For each $u\geq 0$ let $\F{u}=|A\cap[0,u]|$. Then for every $u$ we have the inequality $\F{u-1}+\F{u}+\F{u+1}\leq u$. 
\end{lemma}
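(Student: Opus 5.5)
The plan is to reduce Lemma~\ref{key lemma} to a disjointness count inside $[0,u+1]$. We assume, as in the rest of the section, that $A$ is closed, or a finite union of closed intervals, and pass to the general case by approximation. Put $B=A\cap[0,u+1]$.

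Since $A$ is sum free and $1\in A$, three families of points avoid $A$: the point $2=1+1$, every point of $A+1$, and more generally every point of $A+A$. Also $A\cap[0,1)=\emptyset$. Consider the three sets
\[
B,\qquad (A\cap[0,u])+1,\qquad \bigl((A\cap[0,u-1])+(A\cap[0,u-1])\bigr)\cap[0,u+1].
\]
They all lie in $[1,u+1]$, and $B$ is disjoint from the other two. The first two have measures $\F{u+1}$ and $\F{u}$, so
\[
\F{u+1}+\F{u}+\bigl|X_u\setminus\bigl((A\cap[0,u])+1\bigr)\bigr|\le u,
\]
where $X_u=(A_{u-1}+A_{u-1})\cap[0,u+1]$ and $A_{u-1}=A\cap[0,u-1]$. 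The lemma would therefore follow from the truncated, one-sided sumset estimate
\[
\bigl|\bigl((A_{u-1}+A_{u-1})\cup(A_u+1)\bigr)\cap[2,u+1]\bigr|\;\ge\;\F{u}+\F{u-1}.
\tag{$*$}
\]
Here $A_u=A\cap[0,u]$, and we use that $B$ is disjoint from this union and lies in $[1,u+1]$ with nothing in $[0,1)$.

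As a sanity check, for $A=[1,2)$ and $u=2$ the union is $[2,3]$, of measure $1=\F{2}+\F{1}$. So $(*)$ is tight there and should not be hopeless.

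To prove $(*)$, use that $A_u\cap(A_u+1)=\emptyset$, so that $d(A_{u})\le 1$ by Lemma~\ref{discrepancy observation}. This is exactly the setting of Theorem~\ref{main2}, except that only the part of the sumset below $u+1$ is counted, and it is supplemented by the translate $A_u+1$. Theorem~\ref{main2} for $A_{u-1}$ gives $|A_{u-1}+A_{u-1}|\ge 4\F{u-1}-2$, but that count includes the part of $A_{u-1}+A_{u-1}$ lying above $u+1$, which can be large. I would therefore not apply the theorem as a black box. Instead I would rerun its proof, which is built from the lemmas in the paper's toolbox and proceeds by induction on the number of intervals of a finite union of intervals, and run it in a one-sided form.

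In that one-sided form, list the intervals of $A_u$ from left to right. Each new interval $J$ contributes new points to $(A_{u-1}+A_{u-1})\cup(A_u+1)$ below $u+1$ in two ways: through $J+1$, which has measure $|J|$ and gives the $\F{u}$ term, and through $J+\min A_{u-1}$ and $J+J$, which add roughly $|J|$ more and give the $\F{u-1}$ term. The condition $A\cap(A+1)=\emptyset$, that is the discrepancy bound $d\le 1$, is what should prevent these new points from being swallowed by what was already counted. The truncation at $u+1$ explains why only $\F{u-1}$, and not $\F{u}$, appears in the sumset contribution.

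The main obstacle is exactly $(*)$: establishing a truncated version of Theorem~\ref{main2} in which the loss term $-2d(A)\ge -2$ is absorbed by the extra translate $A_u+1$. What I would try first is to set up the induction with the statement for $A_x$ at every threshold $x$ simultaneously, tracking the rightmost interval carefully. I expect the interplay between the top interval of $A_{u-1}$ and the window $[u-1,u]$ to be the delicate case.
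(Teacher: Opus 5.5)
\textbf{There is a genuine gap: the inequality $(*)$ that you reduce to is false.} The reduction itself is sound. The set $U=\bigl((A_{u-1}+A_{u-1})\cup(A_u+1)\bigr)\cap[2,u+1]$ is disjoint from $B=A\cap[0,u+1]$, so $f(u+1)+|U|\le u$. But because $(*)$ fails, no one-sided induction can establish it.

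Take the paper's extremal set $A=[1,2)\cup[4,5)\cup[7,8)\cup\cdots$ and $u=5/2$. Then $A_{3/2}+A_{3/2}=[2,3]$ and $A_{5/2}+1=[2,3)$, so the left side of $(*)$ is $1$. The right side is $f(5/2)+f(3/2)=3/2$. The closed set $[1,2-\eta]\cup[4,5-\eta]\cup\cdots$ behaves the same way for small $\eta$.

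This failure was predictable. The lemma holds with equality for this $A$ at every $u$. Any argument passing through $f(u+1)+|U|\le u$ would therefore need $B\cup U$ to cover $[1,u+1]$ up to a null set. Here $(3,7/2]$ lies in neither set.

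Your heuristic for the inductive step has the same flaw in miniature. Since $\min A_{u-1}=1$, the supposedly extra contribution $J+\min A_{u-1}$ is the same set as $J+1$.

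Enlarging the sumset to $A_u+A_u$ does not rescue the approach. The set $A=[1,1.1]\cup[3,3.9]\cup[5.05,5.9]$ is sum free, and $u=9/2$ even satisfies $u\pm1\in A$. Yet $(A_u+A_u)\cap[0,u+1]=[2,2.2]\cup[4,5]$ has measure $1.2$, which is less than $f(u)+f(u-1)=1.6$. So no count using only sums lying below $u+1$ can give the lemma.

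\textbf{What is missing is the difference set, together with a global rigidity argument.} In the paper the lemma is proved by contradiction at a point $x$ where it fails and $x\pm1\in A$ (Lemma \ref{positive-triple-reduction}).

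Theorem \ref{offdiagonal main2}, applied to $A\cap[1,x]$ and its reflection, bounds $|(A-A)\cap[0,x-1]|$ from below. This gives $f(x-1)+2f(x)\le x-1+t$, where $t=d(A\cap[0,x+1])$, and hence $f(x+1)-f(x)>1-t$.

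The remaining deficit, of size comparable to $t$, is then eliminated by a long analysis around a discrepancy-maximizing interval $I=[a,b]$, with $s=b-a$ and $y=x+1-a$. One shows in turn that $A$ has large gaps around $y-1$, $x$ and $2$, that $3<y<3+s$, that $t>1/2$, and so on. Each step combines sumset and difference-set estimates such as Lemmas \ref{mzero}, \ref{minterv}, \ref{c lem} and \ref{double boost lem}.

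None of this structure is in your proposal, and the single estimate it relies on is not true.
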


Note that if $A$ is the set $[1,2)\cup[4,5)\cup[7,8)\cup\dots$, then $\frac{\mathrm d}{\mathrm du}\big(\F{u-1}+\F{u}+\F{u+1}\big)=1$ for almost every $u\geq 0$, since exactly one of $u-1,u$ and $u+1$ belongs to $A$. Since $\F{-1}+\F{0}+\F{1}=0$, it follows that $\F{u-1}+\F{u}+\F{u+1}=u$ for every $u$. Thus, the lemma is sharp for this example. Note also that the lemma implies after a simple rescaling that if $\min A=\d$, then $\F{u-\d}+\F{u}+\F{u+\d}\leq u$. 

We now show that Lemma \ref{key lemma} implies Theorem \ref{main1}. 

\begin{proof}[Reduction of Theorem \ref{main1} to Lemma \ref{key lemma}]

We have seen that it is enough to prove that if $A$ is a closed sum-free subset of the positive reals, then $\int_0^\infty\mathrm e^{-u}\F{u}\,\mathrm du\leq 1/3$. Let $\d=\min A$. Then using Lemma \ref{key lemma} we have that
\begin{align*}1&=\int_0^\infty u\mathrm e^{-u}\,\mathrm du\\
&\geq\int_0^\infty\mathrm e^{-u}(\F{u-\d}+\F{u}+\F{u+\d})\,\mathrm du\\
&=\int_0^\infty \F{u}(\mathrm e^{-(u+\d)}+\mathrm e^{-u}+\mathrm e^{-(u-\d)})\,\mathrm du\\
&=(\mathrm e^{-\d}+1+\mathrm e^\d)\int_0^\infty\mathrm e^{-u}\F{u}\,\mathrm du.\\
\end{align*}
The second equality in the above calculation made use of the fact that $\F{u}=0$ for $u\leq\d$.

Since $\cosh(\d)\geq 1$ for all $\d$, it follows that $\int_0^\infty \mathrm e^{-u}\F{u}\,\mathrm du\leq 1/3$, as desired. (In fact, since $\d>0$, we have a strict inequality.) 
\end{proof}

At this point it is less obvious what the role of Theorem \ref{main2} is. The answer is that it will turn out to be very useful in the proof of Lemma \ref{key lemma}, as will parts of its proof. But for now, let us observe that it implies an inequality that is similar to but weaker than Lemma \ref{key lemma}.

\begin{lemma} \label{weaker inequality}
Suppose that Theorem \ref{main2} is true and let $A$ be a Borel measurable subset of $\R$ such that $\min A=1$ and $A$ is sum free. Let $\F{u}=|A\cap[0,u]|$ for every non-negative $u\in\R$. Then 
\[\F{u-1}+2\F{u}\leq u\] 
for every $u$. 
\end{lemma}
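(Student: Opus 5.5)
The plan is to rerun the argument from earlier in this section that gave $\F{u}\leq(u+d(A_u))/3$, but with one sharpening: all the relevant sets lie in $[0,u-1]$ rather than $[0,u]$, because $\min A=1$. Fix $u$ and let $A_u=A\cap[0,u]$, so $|A_u|=\F{u}$. If $u<1$ then $\F{u}=\F{u-1}=0$ and there is nothing to prove, so assume $u\geq 1$. Then $A_u\subset[1,u]$.

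\textbf{Step 1: the positive differences.} Every difference of two elements of $A_u$ has absolute value at most $u-1$. So the symmetric set $A_u-A_u$ lies in $[-(u-1),u-1]$, and $D=(A_u-A_u)\cap[0,u-1]$ has measure at least $\frac12|A_u-A_u|$.

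\textbf{Step 2: a lower bound for $|D|$.} Theorem \ref{main2} gives $|A_u-A_u|\geq 4\F{u}-2d(A_u)$. Hence
\[|D|\geq 2\F{u}-d(A_u).\]

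\textbf{Step 3: the discrepancy is at most $1$.} Since $1\in A$ and $A$ is sum free, $A$ cannot contain both $x$ and $x+1$ (that would give $x+1=x+1$ with all three terms in $A$). So $A_u\cap(A_u-1)=\emptyset$. As $A_u$ consists of non-negative reals and has finite measure, Lemma \ref{discrepancy observation} gives $d(A_u)\leq 1$. Therefore $|D|\geq 2\F{u}-1$.

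\textbf{Step 4: disjointness and conclusion.} $D$ is disjoint from $A$: if $a-b=c$ with $a,b\in A_u$ and $c\in A$, then $b+c=a$ contradicts sum-freeness. Both $D$ and $A\cap[0,u-1]$ are subsets of $[0,u-1]$, so
\[\F{u-1}+2\F{u}-1\leq u-1,\]
which is the claimed inequality $\F{u-1}+2\F{u}\leq u$.

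\textbf{Main obstacle.} There is no real obstacle beyond keeping track of the interval $[0,u-1]$ instead of $[0,u]$; that one-unit gain is exactly what absorbs the $-2d(A_u)\geq -2$ loss. The only technical points are that $A_u$ is Borel with finite measure, so Theorem \ref{main2} applies, and that the difference set of a Borel set is measurable. If needed, the latter can be handled by passing to compact subsets of $A_u$ and taking limits.
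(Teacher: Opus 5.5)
Your proof is correct and is essentially the paper's argument: the positive part of $A_u-A_u$ lies in $[0,u-1]$ and is disjoint from $A\cap[0,u-1]$ by sum-freeness. It has measure at least $2f(u)-d(A_u)$ by Theorem~\ref{main2}, and $d(A_u)\leq 1$ by Lemma~\ref{discrepancy observation}, which together give $f(u-1)+2f(u)\leq u$. Your side remarks on the trivial range $u<1$ and on measurability of the difference set are details the paper leaves implicit.
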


\begin{proof} The positive part of $A_u-A_u$ is contained in $[0,u-1]$ and is disjoint from $A_{u-1}$. By Theorem \ref{main2} it also has measure at least $2|A_u|-d(A_u)$. But by Lemma \ref{discrepancy observation} we have that $d(A_u)\leq 1$, and putting all that together we deduce that $2|A_u|+|A_{u-1}|\leq u-1+1=u$, as claimed.
\end{proof}

\section{Proof of Theorem \ref{main2}}

Throughout the rest of the paper, unless we state otherwise we shall assume that sets named $A$ and $B$ are unions of finitely many finite closed intervals. Given such a set $A$ and an interval $I$, we define $d_A(I)$ to be $2|A\cap I|-|I|$, so the discrepancy $d(A)$ is by definition $\max_I d_A(I)$. 

Our eventual aim will be to prove the result by induction on the number of intervals that make up $A$. To that end, it will be useful to have conditions under which we can guarantee that $(A\cap I)+(A\cap J)=I+J$, since that raises the possibility of filling in gaps between the intervals that make up $A$. Before we start on this, and draw various consequences from it, we give a couple of motivating examples. 

We have already commented that if $I$ is an interval and we let $A=I\setminus E$ for a suitable small set $E$, then $A+A=I+I$. The word ``suitable" is important, however: for example, if $A=[0,a]\cup[b,1]$, then 
\[A+A=[0,2a]\cup[b,1+a]\cup[2b,2],\]
which equals $[0,2]$ if $b\leq 2a$ and $2b\leq 1+a$, or equivalently $1-a\leq 2(1-b)$. Note that the first condition is precisely what is needed to guarantee that $d_A([0,x])\geq 0$ for every $x\in[0,1]$ and the second is what is needed to guarantee that $d_A([x,1])\geq 0$ for every $x\in[0,1]$. 

Another example that explains some of the conditions we impose in what follows is the simple observation that we cannot hope for $A+B$ to equal $[\min A + \min B,\max A+\max B]$ if one of the sets is too small compared with the other. Consider for example the sets $A=[0,1]\cup[2,3]$ and $B=[0,4]\cup[8,12]$. They both satisfy the conditions just discussed, but $A+B=[0,7]\cup[8,14]$. In this case the diameter of $A$ is smaller than the length of the gap inside $B$, so there is no way that $A$ can close the gap in $B$. 

\subsection{Balanced intervals and their properties}

In this subsection we develop tools that will be put to use to prove Theorem \ref{main2}. 

\begin{definition} Let $I=[a,b]$ be an interval. A \emph{left subinterval} of $I$ is a subinterval of the form $[a,c]$ for some $c\in I$ and a \emph{right subinterval} is a subinterval of the form $[c,b]$. 
\end{definition}

\begin{definition} Let $A$ be a finite union of finite closed intervals. An interval $I$ is \emph{left balanced} with respect to $A$ if $d_A(J)\geq 0$ for every left subinterval $J$ of $I$. It is \emph{right balanced} if $d_A(J)\geq 0$ for every right subinterval $J$ of $I$. If $I$ is both left balanced and right balanced with respect to $A$, then it is \emph{balanced}. 
\end{definition} 

If $I$ is written as a disjoint union $I_1\cup\dots\cup I_k$ of subintervals, then $d_A(I)=\sum_{j=1}^kd_A(I_j)$. It follows that $I$ is balanced if and only if $d_A(J)\leq d_A(I)$ for every subinterval $J$ of $I$. From that it follows in turn that if $I$ is balanced, then $d_A(I)\geq 0$. Observe also that if $I=[a,b]$ is left balanced with respect to $A$ and $a<b$, then $a$ must belong to $A$ and not be the right end-point of one of the component intervals of $A$, since otherwise there is some left subinterval $[a,a+\e]$ such that $(a,a+\e]$ is disjoint from $A$, and for that subinterval we have $d_A([a,a+\e])=-\e<0$. Thus, if $I$ is a balanced interval of positive length, then $d_A(I)$ is strictly positive.

If $I$ is an interval $[a,a]$ of length 0, we shall adopt the convention that $I$ is left balanced, right balanced or balanced with respect to $A$ if and only if $a\in A$. This convention will not play an important role in the proof, but will allow us to make certain statements without having to stipulate that the intervals we are talking about have positive length. 

\begin{lemma} \label{union left balanced} Let $I$ and $J$ be intervals that are left balanced with respect to $A$. If $I\cap J\ne\emptyset$, then $I\cup J$ is also left balanced.
\end{lemma}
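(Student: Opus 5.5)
Write $I=[a,b]$ and $J=[c,e]$, and assume without loss of generality that $a\le c$. Since $I\cap J\neq\emptyset$ we have $c\le b$, so $I\cup J=[a,\max(b,e)]$. If $e\le b$ then $I\cup J=I$ and there is nothing to prove, so we assume $e>b$, which gives $I\cup J=[a,e]$. Every left subinterval of $[a,e]$ has the form $K=[a,x]$ with $x\in[a,e]$, and we must show that $d_A(K)\ge 0$ for each such $x$.

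\emph{Case $x\le b$.} Here $K$ is a left subinterval of $I$, so $d_A(K)\ge 0$ because $I$ is left balanced.

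\emph{Case $b<x\le e$.} Split $K$ at $c$ as $[a,c]\cup[c,x]$. These two pieces overlap only in a single point, so by the additivity of $d_A$ noted after the definition of balanced intervals,
\[d_A(K)=d_A([a,c])+d_A([c,x]).\]
Since $c\le b$, the interval $[a,c]$ is a left subinterval of $I$, so $d_A([a,c])\ge 0$. Since $c\le x\le e$, the interval $[c,x]$ is a left subinterval of $J$, so $d_A([c,x])\ge 0$. Hence $d_A(K)\ge 0$.

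It remains to check the degenerate situations. If $a=c$, the piece $[a,c]$ is a single point and contributes $0$ to $d_A$, so the argument goes through unchanged. If $I$ or $J$ has length zero, the convention that $[a,a]$ is balanced exactly when $a\in A$ fits the same argument, since a point has $d_A$ equal to $0$. There is no real obstacle in this proof: the only idea needed is to cut $K$ at the left endpoint $c$ of $J$, and the hypothesis $I\cap J\ne\emptyset$ is exactly what makes $[a,c]$ a subinterval of $I$.
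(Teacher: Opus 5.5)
Your proof is correct and is essentially the paper's argument. After reducing to the overlapping configuration $a\le c\le b<e$, you handle $x\le b$ directly via $I$ and $x>b$ by splitting $[a,x]$ at $c$ and using the additivity of $d_A$, exactly as the paper does (the paper phrases the cases as $x\le b$ or $x\ge c$), and your remarks on the degenerate cases are consistent with the paper's convention.
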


\begin{proof}
If $I\subset J$ or $J\subset I$ then we are done immediately. Otherwise, we may assume without loss of generality that $\min I\leq\min J$, which implies that $\min I<\min J$ and $\max I<\max J$. Let us therefore set $I=[a,b]$ and $J=[c,d]$ with $a<c\leq b<d$, which gives that $I\cup J=[a,d]$. 

Now let $x\in[a,d]$. Then $x\leq b$ or $x\geq c$. If $x\leq b$, then $d_A([a,x])\geq 0$ by the fact that $I$ is left balanced. If $x\geq c$, then 
\[d_A([a,x])=d_A([a,c])+d_A([c,x])\geq d_A([a,c])\geq 0,\]
where the first inequality uses the fact that $J$ is left balanced and the second the fact that $I$ is left balanced.
\end{proof}

This result has some immediate consequences, which we collect together into a single corollary.

\begin{corollary} \label{balanced intervals} The following statements hold with respect to $A$ and $B$.
\begin{enumerate}[(i)]
\item Any two distinct maximal left-balanced intervals are disjoint.
\item If $I$ and $J$ are balanced intervals and $I\cap J\ne\emptyset$, then $I\cup J$ is balanced.
\item Any two distinct maximal balanced intervals are disjoint.
\end{enumerate}
\end{corollary}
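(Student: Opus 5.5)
Each of the three parts follows from Lemma \ref{union left balanced}, its mirror image for right-balanced intervals, and maximality. The mirror statement is this: if $I$ and $J$ are right balanced and intersect, then $I\cup J$ is right balanced. I would prove it either by repeating the argument of Lemma \ref{union left balanced} with the roles of left and right endpoints swapped, or by applying that lemma to the reflected set $-A$. Reflection is legitimate because $d_{-A}(-K)=d_A(K)$ for every interval $K$, and reflection exchanges left subintervals with right subintervals.

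For (i), let $I$ and $J$ be distinct maximal left-balanced intervals and suppose $I\cap J\neq\emptyset$. By Lemma \ref{union left balanced}, $I\cup J$ is left balanced. It contains both $I$ and $J$, so maximality forces $I\cup J=I$ and $I\cup J=J$. Hence $I=J$, a contradiction.

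For (ii), suppose $I$ and $J$ are balanced and meet. Both are left balanced, so $I\cup J$ is left balanced by Lemma \ref{union left balanced}. Both are right balanced, so $I\cup J$ is right balanced by the mirror statement. Therefore $I\cup J$ is balanced.

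Part (iii) follows from (ii) exactly as (i) follows from the lemma. The degenerate convention for intervals $[a,a]$ causes no trouble, since the union arguments only use the defining inequalities on subintervals. Nothing here is a real obstacle. The only point needing care is the reflection step, where one must check that left and right subintervals are swapped and that $d_A$ is preserved; both are immediate.
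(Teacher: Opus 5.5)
Your proposal is correct and follows essentially the same route as the paper: Lemma \ref{union left balanced} plus maximality gives (i), the reflected version of that lemma gives the right-balanced analogue and hence (ii), and (iii) follows from (ii) exactly as (i) follows from the lemma. The differences are cosmetic. The paper argues (i) contrapositively: distinct maximal intervals are not nested, so their union cannot be left balanced. You argue by direct contradiction, and you make the reflection identity $d_{-A}(-K)=d_A(K)$ explicit where the paper simply says ``by symmetry''.
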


\begin{proof} If $I$ and $J$ are distinct maximal left-balanced intervals, then $I$ is not a subset of $J$ and $J$ is not a subset of $I$. Therefore, by maximality, $I\cup J$ is not left balanced, which by Lemma \ref{union left balanced} implies that $I$ and $J$ are disjoint. By symmetry, Lemma \ref{union left balanced} also holds with ``left balanced" replaced by ``right balanced", and therefore it also holds with ``left balanced" replaced by ``balanced". Therefore,   a similar argument tells us that two distinct maximal right-balanced intervals are disjoint, and that two distinct maximal balanced intervals are disjoint. 
\end{proof}

\begin{lemma} \label{intersect balanced}
Let $I$ be balanced with respect to $A$ and let $J$ be balanced with respect to $B$. If $I\cap J\ne\emptyset$ and $d_A(I)=d_B(J)$, then $(A\cap I)\cap (B\cap J)\ne\emptyset$.
\end{lemma}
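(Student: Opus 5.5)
Throughout, write $d$ for the common value $d_A(I)=d_B(J)$, and let $I=[a,b]$, $J=[c,e]$. By symmetry (swapping the roles of $(A,I)$ and $(B,J)$) I may assume $a\le c$. Then the intersection $K=I\cap J$ is $[c,m]$ with $m=\min(b,e)\ge c$. If $K$ has length zero, I handle it directly from the definitions and the convention for degenerate intervals. Otherwise I argue by contradiction: suppose $A\cap B\cap K=\emptyset$. The plan is to track two cumulative discrepancy functions along $K$ and show they force an endpoint of $K$ to lie in both sets.

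\textbf{The two functions.} For $x\in K$ put
\[F(x)=d_A([a,x]),\qquad G(x)=d_B([c,x]).\]
\begin{enumerate}[(i)]
\item Since $I$ is balanced, $0\le F\le d$ on $I$. This uses the remark after the definition that every subinterval of a balanced interval has discrepancy at most that of the whole interval.
\item Likewise $0\le G\le d$ on $J$, with $G(c)=0$.
\item Both functions are Lipschitz, with derivative $+1$ almost everywhere on the set in question and $-1$ off it.
\end{enumerate}

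\textbf{Monotonicity from disjointness.} If $A$ and $B$ are disjoint on $K$, then almost everywhere at most one derivative is $+1$. Hence $F'+G'\le 0$ a.e., so $F+G$ is non-increasing on $K$.

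\textbf{Case $J\subset I$, so $m=e$.} Here $G(e)=d$. Monotonicity gives $F(e)+d\le F(c)$. Combined with $F(e)\ge 0$ and $F(c)\le d$, this forces $F(c)=d$.

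\textbf{Case $m=b<e$.} Here $F(b)=d$. Monotonicity gives $d+G(b)\le F(c)+0\le d$. Hence again $F(c)=d$ (and $G(b)=0$).

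\textbf{Deriving the contradiction at $c$.} In either case, $F$ attains its maximum possible value $d$ at the point $c$. I now show $c\in A$:
\begin{enumerate}[(i)]
\item If $c=a$, then $c\in A$ because a balanced interval of positive length starts at a point of $A$. This is the observation made just before the convention on degenerate intervals.
\item If $c>a$ and $c\notin A$, then since $A$ is closed it misses a neighbourhood of $c$. Then $F(c-\epsilon)=d+\epsilon>d$, contradicting $F\le d$.
\end{enumerate}
On the other hand, $J$ is left balanced of positive length, so $c\in B$ by the same observation. Thus $c\in A\cap B\cap K$, contradicting the assumption. The degenerate case $c=m$ is covered by the same endpoint reasoning, using the convention that a zero-length balanced interval is a point of the set.

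\textbf{Expected obstacle.} The main obstacle is the nested case $J\subset I$. There the naive approach, bounding $d_A(K)+d_B(K)$ below, fails because balancedness only gives upper bounds on $d_A(J)$. That is why I plan to use the cumulative functions $F$ and $G$ rather than discrepancies of $K$ alone. The other step needing care is the measure-zero bookkeeping behind $F'+G'\le 0$ a.e. This is routine because $A$ and $B$ are finite unions of closed intervals, so their intersection with $K$ being empty means they overlap on no set of positive measure.
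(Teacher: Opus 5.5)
Your proof is correct. It rests on the same inequality as the paper's, but you extract the common point differently.

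\textbf{The paper's route.} It proves directly that $d_A(K)+d_B(K)\ge 0$ on the overlap $K$. If $b\le e$, then $K=[c,b]$ is a right subinterval of $I$ and a left subinterval of $J$, so both terms are non-negative. If $J\subset I$, it uses
\[d_A(J)=d_A([a,e])+d_A([c,b])-d_A(I)\ge -d.\]
It then finishes by connectedness: two disjoint closed subsets of $K$ whose measures add up to $|K|$ would have to cover $K$.

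\textbf{Your route.} You assume disjointness, which makes $F+G$ non-increasing, i.e.\ $d_A(K)+d_B(K)\le 0$. All the inequalities are then tight, so $F(c)=d$, and you use closedness only locally at $c$.

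\textbf{Your ``expected obstacle'' is not real.} In the nested case, your own inequalities $F(e)\ge 0$ and $F(c)\le d$ are exactly the paper's lower bound $d_A(J)=F(e)-F(c)\ge -d$.

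\textbf{What each route buys.} Your route is more robust. The connectedness step needs both $A\cap K$ and $B\cap K$ to be non-empty. In the nested case this does not follow from $d_A(J)+d_B(J)\ge 0$ alone: the possibility $A\cap J=\emptyset$, $J\subset B$ still has to be excluded. The paper's one-line conclusion passes over this, and excluding it takes essentially your endpoint argument at $c$. Your version covers it automatically. The paper's route is shorter.

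\textbf{Two cosmetic points.} Step (ii) needs $F(x)=d_A([a,x])$ to be defined on all of $I$, since $c-\epsilon\notin K$. And when $c=a$ with $I$ degenerate, $c\in A$ comes from the convention on zero-length intervals rather than from the positive-length observation.
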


\begin{proof}
If $d_A(I)=d_B(J)=0$, then $I$ and $J$ are singletons, so $I=J$ and by convention the singleton point belongs both to $A$ and to $B$, so the result follows. We may therefore assume that $d_A(I)=d_B(J)>0$ and therefore that $I$ and $J$ have positive length.

Let $I=[a,b]$ and let $J=[c,d]$. Without loss of generality $a\leq c$, which implies that $c\leq b$. If $b\leq d$, then $[c,b]$ is a subinterval of both $I$ and $J$, which are both balanced, so $d_A([c,b])+d_B([c,b])\geq 0$. If $c=b$, then since balanced intervals contain their end points, we have that $c\in(A\cap I)\cap (B\cap J)$. If $c<b$, then both $A\cap[c,b]$ and $B\cap[c,b]$ intersect $[c,d]$ in sets of measure at least $(b-c)/2$, which implies, since $A$ and $B$ are closed, that $A\cap B\cap[c,b]$ is non-empty. Since $[c,b]=I\cap J$, that gives us the conclusion in this case as well.

If $b>d$, then $J\subset I$, and 
\[d_A(J)=d_A([a,d])+d_A([c,b])-d_A(I)\geq -d_A(I)=-d_B(J).\]
It follows that $d_A(J)+d_B(J)\geq 0$. Since $J$ has positive length, this implies that $A\cap B\cap J$ is non-empty, and since $J\subset I$ we are done.
\end{proof}

\begin{corollary} \label{add balanced} Let $I$ be balanced with respect to $A$ and let $J$ be balanced with respect to $B$. If $d_A(I)=d_B(J)$, then $(A\cap I)+(B\cap J)=I+J$.
\end{corollary}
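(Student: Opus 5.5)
The inclusion $(A\cap I)+(B\cap J)\subset I+J$ is trivial, so the work is the reverse inclusion. The plan is to reduce it to Lemma \ref{intersect balanced}, using the identity that $z\in X+Y$ if and only if $X\cap(z-Y)\ne\emptyset$. Write $I=[a,b]$ and $J=[c,d]$, and fix $z\in[a+c,b+d]$. Set $B'=z-B$ and $J'=z-J=[z-d,z-c]$. Then $z\in(A\cap I)+(B\cap J)$ exactly when $(A\cap I)\cap(B'\cap J')\ne\emptyset$. So it suffices to check that $I$, $J'$, $A$ and $B'$ satisfy the hypotheses of Lemma \ref{intersect balanced}, with $B'$ in place of $B$.

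\emph{Balance is preserved under reflection.} The map $x\mapsto z-x$ preserves Lebesgue measure and sends intervals to intervals. It sends left subintervals of $J$ to right subintervals of $J'$, and right subintervals of $J$ to left subintervals of $J'$. It also satisfies $d_{B'}(z-K)=2|(z-B)\cap(z-K)|-|K|=d_B(K)$ for every interval $K$. Hence $J'$ is balanced with respect to $B'$, and $d_{B'}(J')=d_B(J)=d_A(I)$. The set $B'$ is again a finite union of closed intervals, which is what the proof of Lemma \ref{intersect balanced} uses, via closedness.

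For the degenerate case where $J$ is a single point $[c,c]$ with $c\in B$, the reflected interval $J'$ is the single point $z-c$, and $z-c\in B'$ because $c\in B$. So the convention for balancedness of singletons also transfers.

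\emph{The intervals meet.} It remains to check that $I\cap J'\ne\emptyset$, that is, $[a,b]\cap[z-d,z-c]\ne\emptyset$. This holds exactly when $z-d\le b$ and $a\le z-c$, i.e.\ when $a+c\le z\le b+d$, which is our assumption on $z$.

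Lemma \ref{intersect balanced} then gives a point $x\in A\cap I$ with $x\in z-(B\cap J)$. Writing $x=z-y$ with $y\in B\cap J$, we get $z=x+y$, which is what we want.

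There is no real obstacle beyond bookkeeping. The one point to take care over is that the reflection swaps left and right, so it is essential that $J$ is balanced on both sides and not merely left balanced. The equality of discrepancies, which is the key hypothesis of Lemma \ref{intersect balanced}, is preserved because the reflection preserves measure.
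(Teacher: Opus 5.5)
Your proposal is correct and follows essentially the same route as the paper: reflect $B$ and $J$ through the target point $z$, observe that balance and discrepancy are preserved, and apply Lemma \ref{intersect balanced}. Your explicit check that $I\cap(z-J)\neq\emptyset$ holds exactly when $z\in I+J$ spells out a hypothesis of that lemma which the paper's proof leaves implicit.
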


\begin{proof}
Let $x\in I+J$ and note that $x-J$ is balanced with respect to $x-B$ and that $d_{x-B}(x-J)=d_B(J)=d_A(I)$. It follows from Lemma \ref{intersect balanced} that $(A\cap I)\cap((x-B)\cap(x-J))\ne\emptyset$, which implies that $x\in(A\cap I)+(B\cap J)$. Therefore,   $I+J\subset(A\cap I)+(B\cap J)$. The other containment is trivial.
\end{proof}

By replacing $B$ and $J$ with $-B$ and $-J$, we can deduce from Corollary \ref{add balanced} the corresponding result for difference sets. 

\begin{lemma} \label{partition} Let $I$ be a maximal left-balanced interval with respect to $A$. Then $A\cap I$ can be partitioned into sets $A\cap I_1,\dots,A\cap I_k$, where $I_1<I_2<\dots<I_k$ are maximal balanced intervals with respect to $A$. Moreover,
\[d_A(I_i)\leq d_A([\min I_1,\max I_i])\leq d_A(I_1)\]
for every $i$.
\end{lemma}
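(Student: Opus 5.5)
The approach is to build the partition directly out of maximal balanced intervals. The two inequalities should then follow from studying the function $F(x)=d_A([a,x])$ on $I=[a,b]$.

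\emph{Step 1: every point of $A\cap I$ lies in a maximal balanced interval contained in $I$.} Let $x\in A\cap I$. Then $x$ lies in some balanced interval: either the singleton $[x,x]$ (balanced by our convention), or a short interval $[x,x+\e]$ inside a component of $A$. By Corollary \ref{balanced intervals}(ii), the union of all balanced intervals containing $x$ is balanced, so $x$ lies in a unique maximal balanced interval $J$. This $J$ is in particular left balanced and meets $I$, so by Lemma \ref{union left balanced} the union $I\cup J$ is left balanced. Maximality of $I$ then forces $J\subset I$.

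By Corollary \ref{balanced intervals}(iii), distinct maximal balanced intervals are disjoint, so they partition $A\cap I$. To see there are finitely many, I will check that a maximal balanced interval of positive length must begin at the left end-point of a component of $A$. Suppose instead it begins at an interior point $a_0$ of a component of $A$, and extend it slightly to the left inside that component. Left subintervals either lie entirely in $A$ or gain a positive amount of $d_A$. Right subintervals gain $d_A$ equal to the added length. So the extension is still balanced, contradicting maximality. Symmetrically, it must end at a right end-point. Hence there are only finitely many positive-length ones; I would absorb any degenerate singletons by checking they lie in positive-length ones, or note they are also finite in number. List them as $I_1<\dots<I_k$. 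Since $I$ is left balanced of positive length, $a\in A$, so $a\in I_1$ and $\min I_1=a$.

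\emph{Step 2: the right-hand inequality.} Let $c$ be the largest point of $[a,b]$ at which $F$ attains its maximum. Then $[a,c]$ is left balanced, being a left subinterval of $I$. It is also right balanced, since
\[
d_A([x,c])=F(c)-F(x)\geq 0 \quad\text{for } x\in[a,c].
\]
So $[a,c]$ is balanced and contains $a$, hence $[a,c]\subset I_1$. On the other hand $I_1=[a,c_1]$ is a left subinterval of $I$, so $F(c_1)\leq F(c)$. If $c_1>c$, then $F(c_1)<F(c)$ by the choice of $c$ as the last maximiser, yet balance of $I_1$ gives $d_A([c,c_1])=F(c_1)-F(c)\geq 0$, a contradiction. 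Thus $c_1=c$ and $F(x)\leq d_A(I_1)$ for all $x\in I$. Taking $x=\max I_i$ gives $d_A([\min I_1,\max I_i])\leq d_A(I_1)$.

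\emph{Step 3: the left-hand inequality.} By additivity,
\[
d_A([a,\max I_i])=d_A([a,\min I_i])+d_A(I_i).
\]
The first term is nonnegative because $I$ is left balanced, which gives $d_A(I_i)\leq d_A([a,\max I_i])$.

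I expect the main obstacle to be Step 1. The delicate points are the covering and finiteness bookkeeping, including the degenerate singleton conventions, and ensuring that the maximal balanced intervals really stay inside $I$. The inequalities in Steps 2 and 3 are short once the argmax trick is in place.
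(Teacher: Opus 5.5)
Your proof is correct. Steps 1 and 3 follow the paper: you use Lemma \ref{union left balanced} and the maximality of $I$ to get $J\subset I$, Corollary \ref{balanced intervals} for disjointness, and the left balance of $I$ for $d_A(I_i)\le d_A([\min I_1,\max I_i])$.

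Step 2 is a genuinely different argument for the upper bound. The paper argues by contradiction. It takes the minimal $i$ with $d_A([a,b_i])>d_A(I_1)$ and notes that $[a,b_i]$ is left balanced but not right balanced. It then pushes a witness $x$ with $d_A([x,b_i])<0$ onto some right endpoint $b_j$ with $j<i$, using that the gaps between consecutive $I_j$ miss $A$ and that $I_j$ is right balanced. This contradicts the minimality of $i$.

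You instead take the last maximiser $c$ of $F(x)=d_A([a,x])$ on $I$. You note that $[a,c]$ is balanced: left balance is inherited from $I$, and right balance comes from $F(c)\ge F(x)$. The maximality and right balance of $I_1$ then force $I_1=[a,c]$. Your route is shorter and avoids the case analysis on where the witness point lies. It also proves more: $d_A([a,x])\le d_A(I_1)$ for every $x\in I$, not only at the points $\max I_i$. And it identifies $I_1$ as the longest left subinterval of $I$ with maximal discrepancy. The paper's argument works only with the discrete endpoint data of the partition.

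One small presentational point. Corollary \ref{balanced intervals}(ii) combines two intervals at a time, so your phrase ``the union of all balanced intervals containing $x$'' is best justified after your endpoint argument. That argument shows every balanced interval extends to a balanced one whose endpoints are component endpoints. Hence only finitely many intervals need to be combined, and the resulting union is a closed interval.
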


\begin{proof}
Let $A\cap I=[x_1,y_1]\cup\dots\cup [x_n,y_n]$. Then every maximal balanced interval must be of the form $[x_i,y_j]$ for some $j\geq i$. Indeed, if its left end point does not belong to $[x_i,y_i)$ for some $i$, then it is not balanced, and if it belongs to $(x_i,y_i)$ and is balanced, then it is not a maximal balanced interval, and similarly for the right end point. 

Since each interval $[x_i,y_i]$ is contained in a maximal balanced interval and any two distinct maximal balanced intervals are disjoint, the intersections of the maximal balanced intervals with $A$ form a partition of $A$. By Lemma \ref{union left balanced}, if a balanced interval $J$ intersects $I$, then $I\cup J$ is left balanced, and therefore $J\subset I$, by the maximality of $I$. It follows that the intersections of the maximal balanced intervals with $A\cap I$ form a partition of $A\cap I$.

Let $a=x_1, b=y_n$ and let $I_i=[a_i,b_i]$ for each $i$. Then $I=[a,b']$ for some $b'\geq b$, and since it is left balanced, we have that $d_A([a,a_i])\geq 0$ for every $i$, from which it follows that 
\[d_A(I_i)\leq d_A([a,a_i])+d_A([a_i,b_i])=d_A([\min I_1,\max I_i]).\]

If the second inequality is not true, let $i$ be minimal such that $d_A([a,b_i])>d_A(I_1)$. Clearly $i\geq 2$. Since $[a,b_i]$ is left balanced but not balanced, it is not right balanced, so there exists $x\in [a,b_i]$ such that $d_A([x,b_i])<0$. Since $I_i$ is balanced, we must have that $x<a_i$. 

If $x\in(b_j,a_{j+1})$ for some $j$, then $d_A([b_j,b_i])<d_A([x,b_i])$, so we may assume that $x\in[a_j,b_j]$ for some $j$. Since $I_j$ is right balanced, we also have that
\[d_A([b_j,b_i])=d_A([x,b_i])-d_A([x,b_j])\leq d_A([x,b_i]),\]
so we may in fact assume that $x=b_j$. But then
\[d_A([a,b_j])=d_A([a,b_i])-d_A([b_j,b_i])>d_A([a,b_i])>d_A(I_1),\]
which contradicts the minimality of $i$. 
\end{proof}

\begin{lemma} \label{main step} Let $I=[a,b]$ and let $J=[c,d]$. Suppose that $I$ is right balanced with respect to $A$, $J$ is left balanced with respect to $B$, and $d_A(I)\geq d(B)$. Then
\[(B\cap J)-(A\cap I)\supset[c-b,c-b+2|B\cap J|].\]
\end{lemma}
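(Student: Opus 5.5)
The plan is to prove the statement pointwise. We fix $t\in[0,2|B\cap J|]$ and show that $c-b+t\in(B\cap J)-(A\cap I)$. For this we reflect $A$ about $b$ and translate $B$ by $c$. Set
\[A'=\{s\geq 0:\ b-s\in A\cap I\}\subset[0,|I|],\qquad B'=\{r\geq 0:\ c+r\in B\cap J\}\subset[0,|J|].\]
We need $s\in A'$ and $r\in B'$ with $r+s=t$. In the new coordinates the hypotheses read as follows. Right balance of $I$ says $d_{A'}([0,u])\geq 0$ for $u\leq|I|$, and $d_{A'}([0,|I|])=d_A(I)$. Left balance of $J$ says $d_{B'}([0,u])\geq 0$ for $u\leq |J|$. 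Finally, $d_{B'}(K)\leq d(B)\leq d_A(I)$ for every interval $K$. It suffices to show that $A'$ and $t-B'$ intersect. We will do this with a measure-counting argument on a suitable window. The borderline case, where the two measures merely add up to the window length, is handled exactly as in Lemma~\ref{intersect balanced}: both sets are closed and contain the relevant end points, so two closed subsets of an interval whose measures sum to at least its length must meet.

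The first case is $t\leq|I|$. We work in the window $[0,t]$. Right balance gives $|A'\cap[0,t]|\geq t/2$. We also have $|B'\cap[0,t]|\geq t/2$: if $t\leq|J|$ this is left balance, and otherwise $B'\subset[0,t]$ and $|B'|=|B\cap J|\geq t/2$. The two measures sum to at least $t$, so $A'\cap[0,t]$ meets $t-(B'\cap[0,t])$, and we are done.

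The second case is $t>|I|$, which I expect to be the real content of the lemma. Here the window should be $W=[t-|I|,t]$, since the whole of $A'$ lies in $[0,|I|]$ and hence $t-A'\subset W$. We then need
\[|A'|+|B'\cap W|\geq |I|,\]
which rewrites as $d_{B'}(W)\geq -d_A(I)$, because $|A'|=(|I|+d_A(I))/2$. Split $d_{B'}(W)=d_{B'}([0,t])-d_{B'}([0,t-|I|])$. The subtracted term is at most $d(B)\leq d_A(I)$, so it remains to show $d_{B'}([0,t])\geq 0$. If $t\leq|J|$ this is left balance of $J$. If $t>|J|$, then
\[d_{B'}([0,t])=2|B\cap J|-t\geq 0,\]
using $t\leq 2|B\cap J|$. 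This is exactly where both the hypothesis $d_A(I)\geq d(B)$ and the upper limit $2|B\cap J|$ in the conclusion are used. Again closedness settles the equality case, and the lemma follows.
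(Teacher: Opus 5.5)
Your proof is correct and is essentially the paper's argument. Both do a pointwise measure count in a window, split according to whether the translate of $I$ fits inside it (your $t\le|I|$ versus $t>|I|$, the paper's $x\le c-a$ versus $x>c-a$), and in the second case the work is done by $d_B([c,a+x])\le d(B)\le d_A(I)$. Your reflected coordinates let the windows $[c,c+t]$ and $I+x$ also serve when $t>|J|$, which merges the paper's four subcases into two.

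One small point concerns the equality case of your second case. There $B'\cap W$ need not contain an endpoint of $W$, so you need a word when $|B'\cap W|=0$. In that situation everything is tight: $A\supseteq I$, so $t-A'=W$, while $d_{B'}([0,t-|I|])=d(B)>0$. Maximality of $d(B)$ together with closedness of $B'$ then forces $t-|I|\in B'$, so the two sets still meet. The paper also leaves this degenerate case implicit.
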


\begin{proof}
Let $x\in[c-b,d-b]$ and suppose that $x\leq c-a$, so $a\leq c-x$. Since $J$ is left balanced with respect to $B$, $d_B([c,b+x])\geq 0$, and since $I$ is right balanced with respect to $A$, $d_A([c-x,b])\geq 0$. But $d_A([c-x,b])=d_{A+x}([c,b+x])$, so this implies that 
\[(A+x)\cap B\cap[c,b+x]\ne\emptyset.\] 
But $[c-x,b]\subset[a,b]$ and $[c,b+x]\subset[c,d]$, so this shows that $x\in(B\cap J)-(A\cap I)$. 

Now suppose that $x\in[c-b,d-b]$ and $x>c-a$. Then $[a+x,b+x]\subset [c,d]=J$, so by the definition of discrepancy we have that $d_B([c,a+x])\leq d(B)$, which by hypothesis is at most $d_A(I)$. It follows that
\[d_B([a+x,b+x])+d_A(I)\geq d_B([a+x,b+x])+d_B([c,a+x])=d_B([c,b+x])\geq 0,\]
since $J$ is left balanced with respect to $B$. But $d_A(I)=d_{A+x}([a+x,b+x])$, so it follows that $(A+x)\cap B\cap[a+x,b+x]\ne\emptyset$. Since $I+x=[a+x,b+x]\subset J$, it follows that $x\in(B\cap J)-(A\cap I)$ in this case too.

Now suppose that $x\in[d-b,c-b+2|J\cap B|]$. Then 
\begin{equation}
d_B(J)=2|J\cap B|-(d-c)=c-b+2|J\cap B|-(d-b)\geq x-d+b.\label{first}
\end{equation}
Also,
\begin{equation}
d_A([a,d-x])=d_A(I)-d_A([d-x,b])\geq d_A(I)-(b-d+x).\label{second}
\end{equation}

If $x<c-a$ and hence $c-x>a$, then
\[d_A([c-x,d-x])=d_A([c-x,b])-d_A([d-x,b])\geq d-x-b,\]
since $I$ is right balanced and $d_A([d-x,b])\leq b-(d-x)$. It follows that
\[d_{A+x}([c,d])+d_B([c,d])\geq 0,\]
and hence that $(A+x)\cap B\cap [c,d]\ne\emptyset$. Since also $[c-x,d-x]\subset[a,b]$, this implies that $x\in(B\cap J)-(A\cap I)$.

If $x\geq c-a$, then by \eqref{first} and our hypothesis, we have
\[d_B(J)\geq x-d+b+d(B)-d_A(I).\]
By \eqref{second} it follows that
\[d_{A+x}([a+x,d])+d_B([c,d])=d_A([a,d-x])+d_B(J)\geq d(B)\geq d_B([c,a+x]),\]
where the last inequality follows from the definition of $d(B)$ and the fact that $a+x\geq c$. It follows that
\[d_{A+x}([a+x,d])+d_B([a+x,d])\geq 0,\]
and therefore that $(A+x)\cap B\cap[a+x,d]\ne\emptyset$.
Since $a+x\geq c$ and $d-x\leq b$, it follows once again that $x\in (B\cap J)-(A\cap I)$.
\end{proof}

\begin{corollary} \label{main step sums}
Let $I=[a,b]$ and let $J=[c,d]$. Suppose that $I$ is left balanced with respect to $A$, $J$ is left balanced with respect to $B$, and $d_A(I)\geq d(B)$. Then 
\[(A\cap I)+(B\cap J)\supset[a+c,a+c+2|B\cap J|].\]
If $I$ is right balanced with respect to $A$, $J$ is right balanced with respect to $B$, and $d_A(I)\geq d(B)$, then
\[(A\cap I)+(B\cap J)\supset[b+d-2|J\cap B|,b+d].\]
\end{corollary}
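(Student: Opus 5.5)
The plan is to deduce both statements from Lemma \ref{main step} by reflection, applying that lemma to $-A$ in place of $A$. Sums of $A$ with $B$ then become differences of $B$ with $-A$.

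For the first statement, put $A'=-A$ and $I'=-I=[-b,-a]$. Reflection $x\mapsto -x$ preserves lengths and measures, so $d_{A'}(-K)=d_A(K)$ for every interval $K$. It also sends left subintervals of $I$ to right subintervals of $I'$. Hence $I'$ is right balanced with respect to $A'$, because $I$ is left balanced with respect to $A$, and $d_{A'}(I')=d_A(I)\geq d(B)$. Applying Lemma \ref{main step} to $A'$, $I'=[a',b']$ with $a'=-b$, $b'=-a$, and to $B$, $J=[c,d]$, gives
\[(B\cap J)-(A'\cap I')\supset[c-b',c-b'+2|B\cap J|]=[a+c,a+c+2|B\cap J|].\]
Since $A'\cap I'=-(A\cap I)$, the left-hand side equals $(B\cap J)+(A\cap I)$, which is exactly the first claim.

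For the second statement, reflect everything: set $A''=-A$, $B''=-B$, $I''=-I$ and $J''=-J$. Right balanced intervals become left balanced ones, and discrepancies are unchanged, so $d(B'')=d(B)$ and $d_{A''}(I'')=d_A(I)$. The sets $A''$, $B''$ with intervals $I''=[-b,-a]$ and $J''=[-d,-c]$ therefore satisfy the hypotheses of the first statement. That statement gives
\[(A''\cap I'')+(B''\cap J'')\supset[-b-d,-b-d+2|B\cap J|].\]
Negating this inclusion yields $(A\cap I)+(B\cap J)\supset[b+d-2|B\cap J|,b+d]$.

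No step here is a real obstacle. The one thing to check is the bookkeeping of the reflection: that left and right balance swap correctly, and that the endpoint $c-b'$ becomes $a+c$. Both follow directly from the definitions.
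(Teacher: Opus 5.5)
Your proposal is correct and is essentially the paper's proof. The paper also obtains the first assertion by applying Lemma~\ref{main step} to $-A$ and $-I$, and the second by reflecting all sets in the origin; your write-up just makes the bookkeeping explicit: left and right balance swap, discrepancies are preserved, and $c-b'=a+c$.
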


\begin{proof}
The first assertion follows from Lemma \ref{main step} applied to $-A$ and $-I$ instead of $A$ and $I$. The second follows from the first if we replace all sets by their reflections in the origin.
\end{proof}

For the next corollary, we introduce an additional piece of notation.

\begin{definition} \label{edge discrepancy}
Let $A$ be a compact subset of $\R$. Then the \emph{right discrepancy} of $A$, denoted $d_R(A)$ is the maximum of $d_A([x,\max A])$ over all $x\leq\max A$, and the \emph{left discrepancy} of $A$, denoted $d_L(A)$, is the maximum of $d_A([\min A,x])$ over all $x\geq\min A$.
\end{definition}

Note that $d_R(A)$ and $d_L(A)$ are both at most $d(A)$.

\begin{corollary} \label{key bound}
Let $I$ be left balanced with respect to $A$ and suppose that $d_A(I)\geq d(B)$. Then 
\[\big|(B+(A\cap I))\cap[\min(A\cap I)+\min B,\min(A\cap I)+\max B+d_R(B)]\big|\geq 2|B|.\]
If instead $I$ is right balanced with respect to $A$, then
\[\big|(B+(A\cap I))\cap[\max(A\cap I)+\min B-d_L(B),\max(A\cap I)+\max B]\big|\geq 2|B|.\] 
\end{corollary}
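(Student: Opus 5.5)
The plan is to prove the first assertion directly, by cutting $B$ into left-balanced pieces and applying the first part of Corollary \ref{main step sums} to each piece. The second assertion will then follow by reflecting everything in the origin, which swaps left and right balanced and swaps $d_L$ with $d_R$.

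\emph{Setting up.} Write $a=\min(A\cap I)$. Since $I$ is left balanced, its left end-point lies in $A$ (or $I$ is a singleton, by our convention), so $a=\min I$. Decompose $B$ with respect to its maximal left-balanced intervals $J_1<J_2<\dots<J_m$, where $J_i=[c_i,e_i]$.
\begin{enumerate}[(i)]
\item Each component interval of $B$ is itself left balanced with respect to $B$, so it lies in some maximal left-balanced interval.
\item By Corollary \ref{balanced intervals}(i), distinct maximal left-balanced intervals are disjoint.
\item Hence the sets $B\cap J_i$ partition $B$, and $\sum_i|B\cap J_i|=|B|$.
\item If some $J_i$ extends beyond $\max B$, replace it by $[c_i,\min(e_i,\max B)]$. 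This is still left balanced and meets $B$ in the same set.
\end{enumerate}
Now apply the first part of Corollary \ref{main step sums} to $I$ and $J_i$, using the hypothesis $d_A(I)\geq d(B)$. It gives
\[(A\cap I)+(B\cap J_i)\supset[a+c_i,\,a+c_i+2|B\cap J_i|]\qquad\text{for every } i.\]

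\emph{Main step: the intervals are disjoint and lie in the window.} It suffices to show two things. First, these $m$ intervals are pairwise disjoint, apart from end-points. Second, they all lie inside $[a+\min B,\,a+\max B+d_R(B)]$. The sum of their lengths is $2|B|$, so this gives the bound.

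The left end is easy: $c_1\geq\min B$. For the rest I will use maximality of $J_i=[c_i,e_i]$ when $e_i<\max B$. The function $x\mapsto d_B([c_i,x])$ is continuous. Maximality means that immediately to the right of $e_i$ it becomes negative. Therefore $d_B([c_i,e_i])=0$, and $e_i$ is the right end of a component of $B$ followed by a gap. This gives
\[c_i+2|B\cap J_i|=c_i+(e_i-c_i)+d_B(J_i)=e_i.\]
So the $i$-th interval ends at $a+e_i$. Since $e_i<c_{i+1}$, this is at most $a+c_{i+1}$, which gives disjointness, and it is also at most $a+\max B$.

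For the last piece, or any piece with $e_i=\max B$, the same identity gives
\[c_i+2|B\cap J_i|=\max B+d_B([c_i,\max B])\leq\max B+d_R(B),\]
by Definition \ref{edge discrepancy}. This places every interval inside the stated window.

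\emph{Expected obstacle.} The delicate point is the claim that a maximal left-balanced interval ending before $\max B$ has discrepancy exactly $0$. This needs care about the structure of the end-points: whether $e_i$ can fall in the interior of a component of $B$, and how maximality interacts with the truncation at $\max B$. I would settle it using the continuity of $d_B([c_i,\cdot])$. While $x$ moves through $B$ this function increases with slope $1$, and while $x$ moves through a gap it decreases with slope $1$. So the first time it would become negative is just after a zero that occurs at the start of a gap.
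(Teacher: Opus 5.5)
Your proposal is correct and follows essentially the same route as the paper. Both partition $B$ by its maximal left-balanced intervals, apply Corollary \ref{main step sums} to each piece, and then check disjointness and containment in the window $[a+\min B,\,a+\max B+d_R(B)]$. Your continuity argument that $d_B(J_i)=0$ for a maximal left-balanced interval, which gives $c_i+2|B\cap J_i|=e_i<c_{i+1}$ directly, is just a direct form of the paper's contradiction argument that $J_i$ and $J_{i+1}$ would otherwise merge into a larger left-balanced interval.

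One side remark is inaccurate, though your argument never uses it. You say $e_i$ is the right end of a component of $B$, and that the zero of $d_B([c_i,\cdot])$ occurs at the start of a gap. In fact $e_i$ can lie strictly inside a gap: for $B=[0,1]\cup[5,6]$ the first maximal left-balanced interval is $[0,2]$. Only the identity $d_B(J_i)=0$ is needed.
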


\begin{proof}
Let $I=[a,b]$ and let $[c,d]=[\min B,\max B]$. Let $B\cap J_1<\dots<B\cap J_k$ be the partition of $B$ given by the maximal left-balanced intervals $J_i$, which are disjoint by Corollary \ref{balanced intervals}. Let $J_i=[c_i,d_i]$. Thus, our task is to prove that 
\[|((A\cap I)+B)\cap[a+c,a+d+d_R(B)]|\geq 2|B|.\]

For each $i$, the interval $J_i$ is left balanced with respect to $B$, and by hypothesis $d_A(I)\geq d(B)$. Therefore, since $I$ is left balanced with respect to $A$, Corollary \ref{main step sums} gives us that
\[
(A\cap I)+(B\cap J_i)\supset[a+c_i,a+c_i+2|B\cap J_i|].
\]
Let $K_i=[a+c_i,a+c_i+2|B\cap J_i|]$ for each $i$. We show first that the sets $K_i$ are disjoint. For this we need the inequality $c_{i+1}>c_i+2|B\cap J_i|$. But if that were not the case, then $2|B\cap J_i|\geq c_{i+1}-c_i$, and therefore for every $x\in[d_i,c_{i+1}]$ we would have
\[d_B([c_i,x])=2|[c_i,x]\cap B|-|[c_i,x]|\geq 2|B\cap J_i|-(c_{i+1}-c_i)\geq 0.\]
Since $J_i$ and $J_{i+1}$ are both left balanced, this would imply that $[c_i,d_{i+1}]$ is left balanced, contradicting the maximality of $J_i$ and $J_{i+1}$. 

Since $K_1<\dots<K_k$ and $\sum_i|K_i|=2\sum_i|B\cap J_i|=2|B|$, it suffices at this point to prove that 
\[\max K_k=a+c_k+2|B\cap J_k|\leq a+d+d_R(B).\]
But this is equivalent to the assertion that $d_R(B)\geq 2|B\cap J_k|-|[c_k,d]|$, which equals $d_B([c_k,d])$, since $\max(B\cap J_k)=d$. Therefore, this follows from the definition of $d_R(B)$. 

The second assertion follows from the first one applied to $-I$, $-A$ and $-B$, together with the observations that in this case $-I$ is left balanced with respect to $-A$, $d_R(-B)=d_L(B)$, and therefore that
\begin{align*}-[\min(-A\cap -I)+\min(-B),&\min(-A\cap -I)+\max(-B)+d_R(-B)]\\
&=[\max(A\cap I)+\min B-d_L(B),\max(A\cap I)+\max B].\qedhere\\
\end{align*}
\end{proof}

\subsection{Proof of Theorem \ref{main2}}

We shall actually prove the following slightly more general theorem, which has the two conclusions of Theorem \ref{main2} as special cases.

\begin{theorem}\label{offdiagonal main2}
Let $A$ and $B$ be (non-empty) unions of finitely many finite closed intervals, and suppose that $d(A)=d(B)$. Then
\[|A+B|\geq 2|A|+2|B|-d(A)-d(B).\]
\end{theorem}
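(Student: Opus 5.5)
We plan to prove Theorem \ref{offdiagonal main2} by induction on the total number of component intervals of $A$ and $B$, using the toolbox just developed. Write $\Delta=d(A)=d(B)$. The target inequality is $|A+B|\geq 2|A|+2|B|-2\Delta$.

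\textbf{Base case.} The base case is when $A$ or $B$ is a single interval. Suppose $A$ is an interval, so that $d(A)=|A|=\Delta$. Then the target reads $|A+B|\geq 2|B|$, and the hypothesis gives $d(B)=|A|$. Take a maximal balanced interval $I$ for $A$; this is $A$ itself, which is balanced with $d_A(I)=|A|\geq d(B)$. Corollary \ref{key bound} then yields $|A+B|\geq 2|B|$ directly.

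\textbf{Inductive step, setup.} In general, take $I$ to be an interval achieving $d(A)=d_A(I)$. Such an $I$ is balanced, since any subinterval $J$ has $d_A(J)\leq d_A(I)$. Do likewise for $B$, obtaining a balanced $J$ with $d_B(J)=\Delta$. By Corollary \ref{add balanced}, $(A\cap I)+(B\cap J)=I+J$, a full interval of length $|I|+|J|$. Split $A$ into $A\cap I$, $A_L=A\cap(-\infty,\min I)$ and $A_R=A\cap(\max I,\infty)$, and split $B$ similarly into $B\cap J$, $B_L$, $B_R$. We aim to find disjoint pieces of $A+B$:
\begin{itemize}
\item the middle interval $I+J$, of measure $|I|+|J|=2|A\cap I|+2|B\cap J|-2\Delta$;
\item a left part of $A+B$ lying to the left of $\min I+\min J$;
\item a right part lying to the right of $\max I+\max J$.
\end{itemize}
The middle interval already accounts for the main term, with the $-2\Delta$ coming entirely from it. It therefore remains to show that the left part has measure at least $2|A_L|+2|B_L|$, and symmetrically for the right part.

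\textbf{Inductive step, the left part.} For the left part, we would use the sets $A_L+(B\cap J)$ and $(A\cap I)+B_L$, together with $A_L+B_L$. The set $I$ is left balanced with $d_A(I)=\Delta\geq d(B_L)$, and right balanced as well. Applying Corollary \ref{key bound} in its reflected form (right balanced, placed to the left) to $B_L$ gives about $2|B_L|$ of sumset to the left of $\max(A\cap I)+\max B_L$, and similarly $A_L$ against $J$ gives about $2|A_L|$. The difficulty is that these two contributions may overlap, and that they may protrude into $I+J$. Here the induction should enter: apply the theorem to the pair $(A_L\cup(A\cap I),B_L\cup(B\cap J))$, or to suitably truncated sets with fewer intervals whose discrepancies still match. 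The hypothesis $d(A)=d(B)$ is what makes Corollary \ref{add balanced} and Corollary \ref{key bound} applicable, so at each truncation we must restore equality of discrepancies. We would do this by shrinking the set of larger discrepancy, for instance by trimming an interval or intersecting with a maximal left-balanced block from Lemma \ref{partition}, whose blocks have monotone discrepancies by that lemma.

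\textbf{Main obstacle.} I expect the main obstacle to be exactly this bookkeeping: arranging the left and right pieces so that they are disjoint from each other and from $I+J$, while the induction hypothesis applies with equal discrepancies. The terms $d_L$ and $d_R$ in Corollary \ref{key bound} must be absorbed into the overlap with $I+J$, which they extend by precisely the discrepancy slack. My first attempt would be to handle one side at a time. Replace $B$ by $B'=B_L\cup(B\cap J)$ and $A$ by $A'=A_L\cup(A\cap I)$, and prove the one-sided bound $|A'+B'|\geq |I|+|J|+2|A_L|+2|B_L|$ by induction on the number of intervals in $A_L\cup B_L$. At each stage, peel off the leftmost maximal left-balanced block of whichever set has the block of larger discrepancy. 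Finally, glue the left and right estimates along the common interval $I+J$.
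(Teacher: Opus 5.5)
\textbf{There is a genuine gap: the only non-immediate case is never proved, and the peeling rule you give for it points the wrong way.}

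\textbf{What works.} Your base case is correct: if $A$ is an interval, Corollary~\ref{key bound} with $I=A$ gives $|A+B|\geq 2|B|$. Your decomposition around $I+J$ is also sound. Put $A'=A_L\cup(A\cap I)$ and $B'=B_L\cup(B\cap J)$. Then $d(A')=d(B')=\Delta$ and $I+J\subseteq A'+B'\subseteq(-\infty,\max I+\max J]$. So your left estimate follows from the theorem for $(A',B')$. The induction hypothesis supplies this whenever $A_R\cup B_R\neq\emptyset$, and the right side is symmetric.

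\textbf{Where the gap is.} That is all the decomposition buys. When $A_R=B_R=\emptyset$ (or in the mirror case), you have $(A',B')=(A,B)$, and your left estimate \emph{is} the theorem. This is exactly the point where you stop and write `bookkeeping'. So the theorem is left unproved in the one case that is not immediate.

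\textbf{The peeling rule is backwards.} Let $K$ be the maximal left-balanced interval containing $\min B$, and let $I_1,J_1$ be the first maximal balanced blocks of $A,B$. The only mechanism that makes a peeled block pay for itself is Lemma~\ref{main step} in reflected form. It gives $(B\cap K)+(A\cap I_1)\supseteq[\min A+\min B,\min A+\min B+2|B\cap K|]$, but only if $d_A(I_1)\geq d(B\cap K)$. By Lemma~\ref{partition}, $d(B\cap K)=d_B(J_1)$. So you must peel from the set whose first block has the \emph{smaller} discrepancy.

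Your rule fails on $A=[0,1]\cup[10,12]$, $B=[0,0.1]\cup[10,12]$, where $\Delta=2$. Your rule peels $A\cap K=[0,1]$. The part of $A+B$ to the left of $\min\big((A\setminus K)+B\big)=10$ is then only $[0,1.1]$. Its measure is $1.1<2|A\cap K|=2$.

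\textbf{What the correct rule gives for free.} Maximality of $K$ puts the covered interval strictly to the left of $A+(B\setminus K)$. If moreover $d_B(J_1)<\Delta$, the maximizing block lies outside $K$. Then $d(B\setminus K)=d(B)$ automatically, and the induction hypothesis applies as it stands. So no `restoring equality by shrinking' is needed. That device is lossy anyway, as Corollary~\ref{different discrepancies} illustrates.

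\textbf{The missing direct case.} Peeling alone cannot handle the case where the end blocks $I_1,I_k,J_1,J_\ell$ all have full discrepancy $\Delta$, since $K$ may then swallow every maximizing interval. There you argue directly with Corollary~\ref{key bound}:
\begin{itemize}
\item applied to $I_1$ against $B$, it gives $2|B|$ of $A+B$ inside $(-\infty,\min A+\max B+d_R(B)]$;
\item applied to $J_\ell$ against $A$, it gives $2|A|$ inside $[\min A+\max B-d_L(A),\infty)$.
\end{itemize}
These two windows overlap in length at most $d_L(A)+d_R(B)\leq 2\Delta$, which gives the bound.

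These two cases, combined with the symmetries $(A,B)\mapsto(B,A),(-A,-B),(-B,-A)$, constitute the paper's proof. That proof needs no $I+J$ decomposition at all.
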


\begin{proof} Let $n(A)$ and $n(B)$ be the number of intervals that make up $A$ and $B$, respectively. We shall apply induction on $n(A)+n(B)$. When $n(A)+n(B)=2$, so $n(A)=n(B)=1$, we have that $A$ and $B$ are intervals, so $|A+B|=|A|+|B|$, $d(A)=|A|$, and $d(B)=|B|$, so we have equality.

We begin by partitioning $A$ into sets $A\cap I_1 < \dots < A\cap I_k$ and $B$ into sets $B\cap J_1<\dots<B\cap J_\ell$, where the $I_i$ and $J_i$ are maximal balanced intervals in $A$ and $B$, respectively.

We now distinguish two cases. The first is when $d_A(I_1)=d_A(I_k)=d_B(J_1)=d_B(J_\ell)=d(A)$. If this does not hold, then one of the four quantities is strictly less than $d(A)$ (and therefore, by hypothesis, $d(B)$ as well). Since we can replace $(A,B)$ by $(B,A)$, $(-A,-B)$ or $(-B,-A)$ we may assume without loss of generality that $d_B(J_1)\leq d_A(I_1)$ and that $d_B(J_1)<d(A)$. This is our second case.

In the first case, we have that $d_A(I_1)\geq d(B)$ and $d_B(J_\ell)\geq d(A)$. Hence, by Corollary \ref{key bound} applied to $I_1, A$ and $B$, we have that 
\[\big|(B+(A\cap I_1))\cap(-\infty, \min A+\max B+d_R(B)]\big|\geq 2|B|\] 
and by the second part applied to $J_\ell,B$ and $A$, we have that
\[\big|(A+(B\cap J_\ell))\cap[\min A+\max B-d_L(A),\infty)\big|\geq 2|A|.\]
It follows that 
\[|A+B|\geq 2|A|+2|B|-d_L(A)-d_R(B)\geq 2|A|+2|B|-d(A)-d(B).\]

In the second case, let $J$ be the maximal interval that is left balanced with respect to $B$ and contains $\min B$. Let $j$ be such that $B\cap J=\bigcup_{i\leq j}(B\cap J_i)$. By Lemma \ref{partition} we have $d_B(J_i)\leq d_B(J_1)$ for every $i\leq j$, and by assumption $d_B(J_1)\leq d_A(I_1)$. Note also that $d(B\cap J)\leq d_B(J_1)$, since any discrepancy-maximizing interval $K$ for $B\cap J$ is contained in one of the $J_i$ with $i\leq j$, then $d_B(K)\leq d_B(J_i)$ since $J_i$ is balanced, and $d_B(J_i)\leq d_B(J_1)$.

We now apply Lemma \ref{main step} with $I$, $A$ and $B$ replaced by $-I_1$ and $-A$ and $B\cap J$, which we can do as $-I_1$ is balanced, and hence right balanced, with respect to $-A$. From it we conclude that
\begin{align*}
(B\cap J)+(I_1\cap A)&\supset\big[\min J+\min I_1, \min J+\min I_1+2|B\cap J|\big]\\
&=\big[\min A+\min B,\min A+\min B+2|B\cap J|\big].\\
\end{align*}

By Lemma \ref{partition}, $d_B(J_i)\leq d_B(J_1)$ for each $i\leq j$, which by hypothesis is less than $d(A)$, and hence than $d(B)$. It follows that $d(B)=d_B(J_t)$ for some $t>j$, and therefore that $d(B)=d(B\setminus J)$. 

By the inductive hypothesis, we know that
\[|A+(B\setminus J)|\geq 2|A|+2|B\setminus J|-2d(A).\]

The minimal element of $A+(B\setminus J)$ is $\min A+\min J_{j+1}$. We claim that this is greater than $\min A+\min B+2|B\cap J|$. Indeed, we know that $d_B(J')\geq 0$ for every $J'\subset J$ (where $J'$ is a left subinterval of $[\min B,\max B]$), and also that 
\[d_B(J')\geq 2|B\cap J|-|J'|\]
for every interval $J'\supset J$ that shares the left end-point with $J$. Thus, the interval $[\min B,\min B+2|B\cap J|]$ is left balanced, so if $\min J_{j+1}\leq\min B+2|B\cap J|$, then since $J_{j+1}$ is balanced, and hence left balanced, it follows that $[\min B,\max J_{j+1}]$ is left balanced, contradicting the maximality of $J$. 

It follows (using the hypothesis that $d(A)=d(B)$ again) that
\begin{align*}
|A+B|&\geq \big|\big[\min A+\min B,\min A+\min B+2|B\cap J|\big]\cup\big(A+(B\setminus J)\big)\big|\\
&=2|B\cap J|+|A+(B\setminus J)|\\
&\geq 2|B\cap J|+2|A|+2|B\setminus J|-2d(A)\\
&=2|A|+2|B|-d(A)-d(B),\\
\end{align*}
which completes the proof of the inductive step.
\end{proof}

We now obtain a version of Theorem \ref{offdiagonal main2} that does not require the condition $d(A)=d(B)$. We start with a simple technical lemma. We write $\{x\}$ for the fractional part $x-\lfloor x\rfloor$ of a real number $x$.

\begin{lemma} \label{reduce d}
    Given real numbers $\d>0$ and $0<\a\leq 1$, let $E_{\d,\a}$ be the set of all real numbers $x$ such that $0\leq\{x/\d\}\leq\a$. Then if $I$ is any interval, we have that
    \[\a|I|-\a\d\leq|I\cap E_{\d,\a}|\leq\a|I|+\a\d.\]
\end{lemma}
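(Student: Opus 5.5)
The plan is to write $E_{\d,\a}$ as the periodic set $\bigcup_{n\in\Z}[n\d,n\d+\a\d]$, which meets each period window $[n\d,(n+1)\d)$ in measure exactly $\a\d$. Given an interval $I=[s,t]$, I will cut $I$ into a union of full period windows plus a short remainder. Concretely, let $m=\lfloor |I|/\d\rfloor$ and split $I$ as $[s,s+m\d]\cup[s+m\d,t]$. The first piece is a union of $m$ translates of a window of length $\d$. Since $E_{\d,\a}$ is $\d$-periodic, any interval of length exactly $\d$ meets $E_{\d,\a}$ in measure exactly $\a\d$, whatever its starting point. So the first piece contributes exactly $m\a\d$.

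The remainder $R=[s+m\d,t]$ has length $r=|I|-m\d\in[0,\d)$. It remains to bound $|R\cap E_{\d,\a}|$ between $\a r-\a\d$ and $\a r+\a\d$; adding $m\a\d$ then gives $\a|I|\mp\a\d$.

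For the upper bound, use the trivial estimates $|R\cap E_{\d,\a}|\le\min(r,\a\d)$. We need $\min(r,\a\d)\le\a r+\a\d$, and this already holds since $\a\d\le\a r+\a\d$. For the lower bound we need $|R\cap E_{\d,\a}|\ge\a r-\a\d$. The right-hand side is negative because $r<\d$, so the inequality holds trivially.

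I expect no real obstacle. The only point needing care is the claim that every interval of length $\d$ meets $E_{\d,\a}$ in measure exactly $\a\d$. This follows from periodicity: translating the window by a multiple of $\d$ and reducing modulo $\d$, the intersection is a rearrangement of $[0,\a\d]$ within one period.

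If a sharper statement were wanted, one could note that the remainder actually satisfies $\max(0,r-(1-\a)\d)\le|R\cap E|\le\min(r,\a\d)$, which gives error terms of order $\a(1-\a)\d$. The stated bounds with error $\a\d$ do not need this, so the crude argument above suffices.
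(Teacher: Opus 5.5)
Your proof is correct and is essentially the paper's argument: both rest on the fact that an interval whose length is a multiple of $\delta$ meets $E_{\delta,\alpha}$ in exactly $\alpha$ times its length, and the leftover piece of length less than $\delta$ costs at most $\alpha\delta$. The paper sandwiches $I$ between a subinterval of length $\lfloor |I|/\delta\rfloor\delta$ and a containing interval of length $\lceil |I|/\delta\rceil\delta$ and uses monotonicity, whereas you use additivity with the trivial bounds $0\leq|R\cap E_{\delta,\alpha}|\leq\alpha\delta$ on the remainder; this yields the same two estimates.
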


\begin{proof}
Let $s$ be the largest multiple of $\d$ that is less than or equal to $|I|$ and let $t$ be the smallest multiple of $\d$ that is greater than or equal to $|I|$. Let $I'$ be a subinterval of $I$ of length $s$ and let $I''$ be an interval of length $t$ that contains $I$. 

Then 
\[\a s=|I'\cap E_{\d,\a}|\leq|I\cap E_{\d,\a}|\leq|I''\cap E_{\d,\a}|=\a t.\]
Since $t-s\leq \d$ and $|I|\in[s,t]$, the result follows.
\end{proof}

\begin{corollary} \label{different discrepancies}
    Let $A$ and $B$ be unions of finitely many finite closed intervals and suppose that $d(A)\geq d(B)$. Then
    \[|A+B|\geq\Big(1+\frac{d(B)}{d(A)}\Big)|A|+2|B|-2d(B).\]
\end{corollary}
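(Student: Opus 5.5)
The plan is to reduce to Theorem \ref{offdiagonal main2} by modifying $A$ so that its discrepancy drops to $d(B)$, using the periodic sets $E_{\d,\a}$ of Lemma \ref{reduce d}. Write $\a=d(B)/d(A)\in(0,1]$; if $\a=1$ the statement is exactly Theorem \ref{offdiagonal main2}, so assume $\a<1$.

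\emph{Step 1: thinning.} For a small period $\d$ and a parameter $\be\in(0,1]$ set $A_\be=A\cap E_{\d,\be}$. This is again a finite union of closed intervals, and $A_\be+B\subset A+B$. Write $A$ as a union of $n$ intervals and apply Lemma \ref{reduce d} to each component intersected with a given interval $I$. This gives
\[|A_\be\cap I|=\be|A\cap I|+O(n\d),\]
hence $|A_\be|\geq\be|A|-n\be\d$, and
\[2|A_\be\cap I|-|I|\leq\be(d(A)+|I|)-|I|+O(n\d)\leq\be d(A)+O(n\d).\]
So $d(A_\be)\leq\be d(A)+O(n\d)$, while $d(A_1)=d(A)\geq d(B)$.

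\emph{Step 2: exact matching.} For fixed $\d$, the quantity $|A_\be\cap I|$ is Lipschitz in $\be$, uniformly over intervals $I$ inside the convex hull of $A$. Hence $\be\mapsto d(A_\be)$ is continuous. Fix $\a'<\a$. For $\d$ small we have $d(A_{\a'})<d(B)\leq d(A_1)$, so by the intermediate value theorem there is $\be\in[\a',1]$ with $d(A_\be)=d(B)$. Theorem \ref{offdiagonal main2} applied to $A_\be$ and $B$ then gives
\[|A+B|\geq|A_\be+B|\geq 2|A_\be|+2|B|-2d(B)\geq 2\a'|A|+2|B|-2d(B)-O(n\d),\]
and letting $\d\to0$ and $\a'\to\a$ yields $|A+B|\geq 2\a|A|+2|B|-2d(B)$.

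\emph{Step 3: the main obstacle.} The target coefficient on $|A|$ is $1+\a$, not $2\a$, so an extra $(1-\a)|A|$ must be recovered. I would first try a second device: adjoin to $B$ a far-away interval $L$ of length $\ell$, chosen to raise $d(B)$. Then $A+(B\cup L)$ splits as the disjoint union of $A+B$ and $A+L$, with $|A+L|=|A|+\ell$. Combining this with thinning to an intermediate density gives $|A+B|\geq\be|A|+2|B|-\be d(A)-d(B)$. This is still not enough, because linear combinations of the two bounds do not reach $(1+\a)|A|-\a d(A)-d(B)$.

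So I expect the real argument to rerun the proof of Theorem \ref{offdiagonal main2} rather than apply it as a black box. Concretely, I would reuse its two-case induction, Corollary \ref{key bound} and Lemma \ref{main step}, which need only the one-sided hypothesis $d_A(I)\geq d(B)$ and so still apply when $d(A)\geq d(B)$. The aim there is to show that the part of $A+B$ coming from $B+(A\cap I_1)$ contributes $2|B|$, while the remaining components of $A$ contribute at least $(1+\a)$ times their measure, with $\a d(A)=d(B)$ paid as the discrepancy loss. Making this count work, and deciding where the thinning by $E_{\d,\a}$ enters, is the step I expect to be hardest.
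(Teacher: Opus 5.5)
There is a genuine gap. Your Steps 1--2 only prove $|A+B|\geq 2\alpha|A|+2|B|-2d(B)$ with $\alpha=d(B)/d(A)$. Since $2\alpha<1+\alpha$ whenever $\alpha<1$, this is strictly weaker than the statement. For example, for two intervals with $|A|>|B|$ it gives $2|B|$ instead of the sharp value $|A|+|B|$. Step 3 does not recover the missing $(1-\alpha)|A|$; it is a speculation rather than an argument.

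The loss is not intrinsic to thinning. It comes from your discrepancy estimate for the thinned set. When you bound $2\beta|A\cap I|-|I|\leq\beta(d(A)+|I|)-|I|\leq\beta d(A)$, you discard the term $-(1-\beta)|I|$. You also use only $2|A\cap I|-|I|\leq d(A)$, never the trivial bound $|A\cap I|\leq|I|$. Using both through the identity
\[2\beta|A\cap I|-|I|=(2-2\beta)\bigl(|A\cap I|-|I|\bigr)+(2\beta-1)\bigl(2|A\cap I|-|I|\bigr),\]
you get, for $\beta\geq 1/2$, the bound $2\beta|A\cap I|-|I|\leq(2\beta-1)d(A)$. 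Hence $d(A_\beta)\leq(2\beta-1)d(A)+2n\beta\delta$, which is much better than $\beta d(A)+O(n\delta)$.

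With this sharper bound your Step 2 goes through unchanged. Take $\alpha'<\alpha$ and $\beta'=\frac12(1+\alpha')$. For small $\delta$ you get $d(A_{\beta'})\leq\alpha' d(A)+2n\delta<d(B)\leq d(A_1)$. Continuity and the intermediate value theorem then give $\beta\in[\beta',1]$ with $d(A_\beta)=d(B)$. Since $2|A_\beta|\geq(1+\alpha')|A|-2n\delta$, applying Theorem \ref{offdiagonal main2} as a black box to $A_\beta$ and $B$ yields
\[|A+B|\geq(1+\alpha')|A|+2|B|-2d(B)-2n\delta,\]
and the statement follows by letting $\delta\to0$ and $\alpha'\to\alpha$. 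This is exactly the paper's proof. So neither adjoining a far-away interval nor rerunning the induction is needed. The only missing ingredient is the factor $2\beta-1$ in place of $\beta$, which moves the matching density from about $\alpha$ to about $\frac12(1+\alpha)$.
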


\begin{proof}
The result is trivial if $d(B)=0$, so we assume now that $d(B)>0$. We claim first that for every $\e>0$ there is a subset $\tilde A$ of $A$, which is a union of finitely many closed intervals, such that $d(\tilde A)=d(B)$ and $2|\tilde A|\geq\big(1+\frac{d(B)}{d(A)}\big)|A|-\e$. To prove this, we shall take $\tilde A$ to be the set $A\cap E_{\d,\a}$ for suitably chosen $\a$ and $\d$. 

Let the number of intervals that make up $A$ be $n$. Then for any interval $I$, $A\cap I$ is a union of at most $n$ closed intervals, so by Lemma \ref{reduce d},
\[\a|A\cap I|-n\a\d\leq|\tilde A\cap I|\leq\a|A\cap I|+n\a\d.\]
It follows that $|\tilde A|\geq\a|A|-n\a\d$. It also follows that
\begin{align*}
2|\tilde A\cap I|-|I|&\leq 2\a|A\cap I|-|I|+2n\a\d\\
&=(2-2\a)(|A\cap I|-|I|)+(2\a-1)(2|A\cap I|-|I|)+2n\a\d\\
&\leq (2\a-1)d(A)+2n\a\d.\\
\end{align*}
Therefore, if
\[(2\a-1)d(A)+2n\d\leq d(B),\]
or equivalently 
\[\a\leq\frac 12\Big(1+\frac{d(B)-2n\d}{d(A)}\Big),\]
it follows that $d(\tilde A)\leq d(B)$.

Also, if $\a=1$, then $\tilde A=A$, so in this case $d(\tilde A)=d(A)\geq d(B)$. Since $d(\tilde A)$ depends continuously on $\a$, it follows that for every $\d>0$, there exists $\a$ such that $d(\tilde A)=d(B)$ and $\frac 12\Big(1+\frac{d(B)-2n\d}{d(A)}\Big)\leq\a\leq 1$.

Since we can take $\d$ as small as we like, it follows that for every $\e>0$ we can choose $\tilde A$ with $2|\tilde A|\geq \Big(1+\frac{d(B)}{d(A)}\Big)|A|-\e$ and $d(\tilde A)=d(B)$, as claimed.

Once we have this set, the rest of the proof is easy. By Theorem \ref{offdiagonal main2}, 
\[|A+B|\geq|\tilde A+B|\geq 2|\tilde A|+2|B|-2d(B)\geq\Big(1+\frac{d(B)}{d(A)}\Big)|A|-\e+2|B|-2d(B).\]
Since $\e>0$ was arbitrary, the result follows.
\end{proof}

We remark that if $A$ and $B$ are intervals with $|A|\geq|B|$, then $|A+B|=|A|+|B|$, and since $d(A)=|A|$ and $d(B)=|B|$,
\[\Big(1+\frac{d(B)}{d(A)}\Big)|A|+2|B|-2d(B)=|A|+|B|+2|B|-2|B|=|A|+|B|\]
as well. Thus, Corollary \ref{different discrepancies} is sharp for this example.

\subsection{Sets with doubling constant less than 3}

We finish this section by showing how to recover from Theorem \ref{main2} the sharp symmetric stability result for the one-dimensional Brunn--Minkowski inequality discussed in the introduction.

The following amplification argument is reminiscent of the passage to a quotient modulo the diameter used in Ruzsa's work \cite{Ruzsa1991}. Its purpose is to reduce the effective discrepancy: while a single copy of $A$ may have discrepancy as large as $|A|$, a long string of translates has a much smaller discrepancy per copy, controlled asymptotically by the density of $A$ in its convex hull. This gain is what allows us to obtain the sharp stability bound from Theorem \ref{main2}.

\begin{corollary}
Let $A\subset\R$ be a compact set. Then
$$
    |A+A|-2|A|
    \geq
    \min\bigl\{
        |\operatorname{co}(A)\setminus A|,
        |A|
    \bigr\}.
$$
\end{corollary}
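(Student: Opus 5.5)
The plan is an amplification argument in the spirit of Ruzsa's quotient. We replace $A$ by a long string of abutting translates, apply Theorem \ref{main2} to the string, and read off information about $A+A$ modulo the diameter of $A$.

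\emph{Setup.} After translating we may assume $\operatorname{co}(A)=[0,L]$, so $0,L\in A$. Write $m=|A|$, so $|\operatorname{co}(A)\setminus A|=L-m$. Let $\pi:\R\to[0,L)$ denote reduction modulo $L$. For $N\geq 1$ put
\[A_N=A+L\{0,1,\dots,N-1\}.\]
Consecutive translates meet only at endpoints, so $|A_N|=Nm$. Moreover
\[A_N+A_N=(A+A)+L\{0,1,\dots,2N-2\}\subset[0,2NL].\]

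\emph{Step 1: bound $d(A_N)$.} I claim that
\[d(A_N)\leq\max\{0,N(2m-L)\}+2L.\]
Given an interval $I$, cut it into the full periods $[kL,(k+1)L]$ it contains, plus at most two partial pieces at its ends. Each full period contributes exactly $2m-L$ to $d_{A_N}(I)$. Each partial piece contributes at most its length, which is at most $L$. The number of full periods is between $0$ and $N$, which gives the claim.

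\emph{Step 2: bound $|A_N+A_N|$ from above.} Fix $t\in[0,L)$ and count the $j\in\{0,\dots,2N-1\}$ with $t+jL\in A_N+A_N$. This count is $0$ unless $t\in\pi(A+A)$, and it is always at most $2N$. Integrating over $t$ gives $|A_N+A_N|\leq 2N|\pi(A+A)|$.

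\emph{Step 3: combine with Theorem \ref{main2}.} Applying Theorem \ref{main2} to the compact set $A_N$ and using Steps 1 and 2,
\[2N|\pi(A+A)|\geq 4Nm-2\max\{0,N(2m-L)\}-4L.\]
Dividing by $2N$ and letting $N\to\infty$ yields
\[|\pi(A+A)|\geq\min\{2m,L\}.\]
This step is the heart of the argument. The point is that a single copy of $A$ can have discrepancy as large as $m$, but the discrepancy per copy of $A_N$ is asymptotically only $\max\{0,2m-L\}$. The main thing to check carefully is the uniform $O(L)$ error term in Step 1, so that it disappears in the limit.

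\emph{Step 4: return from $\pi(A+A)$ to $A+A$.} Since $A+A\subset[0,2L]$, we can write
\[|A+A|=\int_0^L c(t)\,\mathrm dt,\qquad c(t)=\#\{k\in\{0,1\}:t+kL\in A+A\}.\]
We have $c(t)\geq 1$ for every $t\in\pi(A+A)$, up to the measure-zero point $2L$. Since $0,L\in A$, we have $A\cup(A+L)\subset A+A$, so $c(t)=2$ for every $t\in A\cap[0,L)$. As $A\subset\pi(A+A)$, it follows that
\[|A+A|\geq|\pi(A+A)|+m\geq\min\{2m,L\}+m.\]
Hence
\[|A+A|-2|A|\geq\min\{m,L-m\}=\min\bigl\{|A|,|\operatorname{co}(A)\setminus A|\bigr\}.\]
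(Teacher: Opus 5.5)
Your proof is correct and is essentially the paper's amplification argument. You use the same string of translates $A+L\{0,\dots,N-1\}$ and the same bound $d(A_N)\le N(2|A|-L)_+ + 2L$; as the paper does, you should first reduce to intervals $I\subseteq[0,NL]$, since full periods outside contribute $-L$ rather than $2m-L$. You then apply Theorem \ref{main2} and let $N\to\infty$, exactly as the paper does. The only difference is bookkeeping in the upper bound: the paper shows directly that $|A_n+A_n|\le(2n-1)|A+A|-(2n-2)|A|$ by overlapping consecutive translates of $A+A$, whereas you pass through the quotient via $|A_N+A_N|\le 2N|\pi(A+A)|$ and $|A+A|\ge|\pi(A+A)|+|A|$ (both routes rest on $A\cup(A+L)\subseteq A+A$), which has the small bonus of isolating the circle inequality $|\pi(A+A)|\ge\min\{2|A|,L\}$ as an intermediate step.
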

\begin{proof}
    If $|\operatorname{co}(A)|=0$, then $|A|=0$ and there is nothing to prove.
Up to scaling and translating, we may therefore assume that $A\subset[0,1]$
and that $\operatorname{co}(A)=[0,1]$. Since $A$ is compact, it follows that
$0,1\in A$.
For each positive integer $n$, let
$$
    A_n=A+\{0,1,\dots,n-1\}.
$$
Since the translates $A+j$ intersect only in sets of measure zero, we have that $|A_n|=n|A|$. We now want to bound the discrepancy of the set $A_n$.
We claim that
$$
    d(A_n)\leq n(2|A|-1)_+ +2,
$$
where $x_+ := \max\{x,0\}$ denotes the positive part of $x$.
Let $I$ be an interval. We may assume without loss of generality that $I\subseteq [0,n]$. Let $J$ be the largest subinterval of $I$ whose endpoints are integers. Then $|I\setminus J|\leq 2$. Moreover, on each interval of the form $[j,j+1]$, with $j\in{0,1,\dots,n-1}$, the set $A_n$ is a translate of $A$, which implies that $|A_n\cap J|=|A||J|$.
Therefore, 
$$
\begin{aligned}
2|A_n\cap I|-|I|
&\leq 2|A_n\cap J|+2|I\setminus J|-|J|-|I\setminus J|\\
&=(2|A|-1)|J|+|I\setminus J|\\
&\leq (2|A|-1)_+|J|+2\\
&\leq n(2|A|-1)_+ +2.
\end{aligned}
$$
Applying Theorem \ref{main2} to $A_n$, we obtain that
$$
    |A_n+A_n|
    \geq
    4|A_n|-2d(A_n)
    \geq
    4n|A|-2n(2|A|-1)_+-4.
$$
On the other hand,
$$
    A_n+A_n=\bigcup_{j=0}^{2n-2}(A+A+j).
$$
Since $0,1\in A$, for each $1\leq j\leq 2n-2$ we have
$$
    A+j\subset (A+A+j-1)\cap(A+A+j).
$$
It follows that when we add the sets
$A+A+j$ one at a time, each new set overlaps the previous union in measure at
least $|A|$. Therefore,
$$
    |A_n+A_n|
    \leq
    (2n-1)|A+A|-(2n-2)|A|.
$$
Combining the two bounds for $|A_n+A_n|$ and letting $n\to\infty$, we obtain
$$
4|A|-2(2|A|-1)_+
\leq
2|A+A|-2|A|.
$$
Finally, using $|A|-(2|A|-1)_+=\min\{1-|A|,|A|\}$, the last inequality can be rewritten as
$$
    |A+A|-2|A|
    \geq
    \min\bigl\{
        |\operatorname{co}(A)\setminus A|,
        |A|
    \bigr\},
$$
as wanted.
\end{proof}

Note that the above result is trivial if $|A+A|\geq 3|A|$, but that if $|A+A|=C|A|$ for some $C<3$, then it proves that $A$ has diameter at most $(C-1)|A|$.

\section{Proof of Lemma \ref{key lemma} and hence Theorem \ref{main1}}

Recall that Lemma \ref{key lemma} states that if $A$ is a sum-free set which is a finite union of closed intervals and has minimum equal to $1$, then, defining $\F{x} = |A\cap [0,x]|$ (which by hypothesis is equal to $|A\cap [1,x]|$), we have for every $x\geq 1$ the inequality
\begin{equation*}
\F{x-1}+\F{x}+\F{x+1}\leq x .
\end{equation*}
Recall also that this implies our first main theorem, that a product-free open subset of $(0,1)$ has measure at most 1/3. 

\subsection{Overview of the proof}
Let us begin by recalling that the inequality
$$
\F{u-1}+\F{u}+\F{u+1}\leq u
$$
is sharp, with equality when
$$
A=[1,2)\cup[4,5)\cup[7,8)\cup\cdots.
$$
This example should be kept in mind throughout the proof. It has period $3$: in each interval of length $3$, the set occupies one interval of length $1$, and the remaining two units form the gap before the next occupied block. The natural points at which equality occurs are the points $x=3m$. At such a point, $x-1$ and $x+1$ lie in the neighbouring occupied blocks, with residues $2$ and $1$ modulo $3$ respectively. Thus, if the inequality were false, one should expect the set near the relevant point $x$ to resemble this period-3 configuration.

We therefore argue by contradiction and choose a point $x$ at which the inequality fails and a preliminary lemma shows that this may be done so that $x-1$ and $x+1$ both belong to $A$. Thus the local picture is already the one suggested by the sharp example: the point $x$ lies in a gap, while the set appears on either side of it. One should think of $x$ as morally playing the role of a multiple of $3$. The rest of the argument shows that more and more of the period-3 rigidity must also be present.

We then introduce a discrepancy-maximizing interval $I=[a,b]\subseteq[0,x+1]$,
and write
$$
s=b-a,\qquad t=d(A\cap[0,x+1]),\qquad y=x+1-a.
$$
The interval $I$ is chosen to maximize the discrepancy of $A\cap[0,x+1]$, and therefore identifies the part of $A$ that carries the main structure at this scale. The parameter $y$ measures the position of this block relative to the point $x+1$. In the model example, the relevant block is the one immediately to the left of $x+1$, whose left endpoint lies exactly three units earlier. A central part of the proof is to show that the same remains true in any hypothetical counterexample.

We begin by showing that the discrepancy-maximizing interval $I=[a,b]$ cannot come too close to $x$. The first step is to prove that $A$ is empty on the interval $[y-1,y-s]$. In the sharp example this interval forms part of the gap between two consecutive occupied blocks, so this is the first indication that the same period-3 geometry is being forced in a hypothetical counterexample. From this we deduce first that $b<x$, equivalently that $y>1+s$, and then that $a<x-1$, equivalently that $y>2$. Thus the whole interval $I$ is forced to lie strictly to the left of the local configuration around $x$.

The next step is to show that this first gap has the correct rigidity. If $u$ denotes the last point of $A$ before $y-1$ and $v$ the first point of $A$ after $y-1$, then we prove that $v-u\geq 1$. In other words, $y-1$ lies inside a gap of length at least $1$. Again this is exactly what happens in the sharp example: once one sees one missing block, it should already have essentially the same scale as the occupied one. This estimate is then used in a sequence of difference-set arguments that push the maximizing interval further and further to the left. First one obtains $y>2+s$, and then, after a more delicate argument, one reaches the much sharper conclusion that $3<y<3+s$. At that point the position of $I$ is almost forced: its left endpoint must lie very close to $x-2$, just as in the extremal period-3 configuration.

Once $y$ has been confined to this narrow range, we turn back to the region around $x$ itself. The argument then shows that there is also a gap of length at least $1$ around $x$, and from this one derives the important lower bound $t>1/2$. This says that the discrepancy-maximizing block is not only well positioned, but also genuinely dense. In the model example this is again the correct picture, since the relevant occupied block carries more than half of the mass of its ambient unit interval.

The next step is to show that the structure already forced near $x$ also has consequences near the beginning of the set. Using the information already obtained, we prove that the point $2$ must also lie in a large gap of $A$. Thus the period-3 geometry is no longer confined to the neighbourhood of $x$: the same constraints now force a large gap at the beginning of the configuration, around the point $2$. A final bootstrapping argument then shows that the gap around $y-1$ and the gap around $2$ must in fact line up too closely to be compatible with sum-freeness.

At that point there is no room left. The various gap estimates, density estimates, and sum-set and difference-set bounds become incompatible, and the assumed counterexample cannot exist. This contradiction completes the proof.

To make this strategy work, we repeatedly use the tools developed in the preceding sections. The argument is delicate because the inequalities used along the way are essentially sharp in the extremal configuration, and therefore leave very little room for loss. The proof is consequently arranged as a rigidity argument: starting from a hypothetical failure of the desired inequality, we force the set to resemble the period-3 extremal example more and more closely, until the accumulated gap, density, and difference-set constraints become incompatible.

\subsection{Preparatory lemmas}

Before we embark on the proof in earnest, we prove some general facts that will be useful in the proof, some of them repeatedly.

\begin{lem} \label{s<1} Let $A$ be a compact set of real numbers such that $A\cap(A-1)=\emptyset$ and let $I$ be a discrepancy-maximizing interval for $A$. Then $|I|<1$.
\end{lem}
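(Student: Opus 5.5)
The plan is to compare $d_A(I)$ with the discrepancies of the two unit-length end pieces of $I$, using the disjointness of $A$ and $A-1$ inside $I$. This yields $d_A(I)\le d(A)$, with equality forcing a covering that is impossible for closed sets when $|I|>1$; the case $|I|=1$ is handled by trimming an endpoint.

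\textbf{Setup.} Write $I=[a,b]$ and $s=b-a$, and suppose for contradiction that $s\geq 1$.

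\textbf{Step 1 (the case $s>1$).} The sets $A\cap[a,b-1]$ and $(A\cap[a+1,b])-1$ both lie in $[a,b-1]$. They are disjoint because $A\cap(A-1)=\emptyset$. Hence
\[|A\cap[a,b-1]|+|A\cap[a+1,b]|\leq s-1.\]
Now split $2|A\cap I|$ in two ways:
\[2|A\cap I|=\bigl(|A\cap[a,b-1]|+|A\cap[b-1,b]|\bigr)+\bigl(|A\cap[a,a+1]|+|A\cap[a+1,b]|\bigr).\]
The two expressions for $|A\cap I|$ are exact, since the overlaps at single points have measure zero. Combining this with the previous inequality gives
\[d_A(I)=2|A\cap I|-s\leq |A\cap[a,a+1]|+|A\cap[b-1,b]|-1=\tfrac12\bigl(d_A([a,a+1])+d_A([b-1,b])\bigr)\leq d(A).\]
Since $I$ is maximizing, every inequality in this chain is an equality. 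In particular the two disjoint closed sets $A\cap[a,b-1]$ and $(A\cap[a+1,b])-1$ would have to fill the non-degenerate interval $[a,b-1]$ up to a null set. This is exactly the situation excluded in the remark after Lemma \ref{discrepancy observation}. Neither closed set can be the whole of $[a,b-1]$: if one were, the other would be empty, which contradicts the measure count because the other set would then need measure $0$ while the first already has measure $s-1$. So, by connectedness, their union misses a non-empty relatively open subset of $[a,b-1]$, which has positive measure. This is a contradiction.

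\textbf{Step 2 (the case $s=1$).} Here $I=[a,a+1]$, and $a$ and $a+1$ cannot both lie in $A$. Suppose, say, $a+1\notin A$. Since $A$ is compact, some interval $(a+1-\e,a+1]$ is disjoint from $A$. Then $[a,a+1-\e]$ has discrepancy $d_A(I)+\e>d_A(I)$, contradicting maximality. The case $a\notin A$ is symmetric.

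\textbf{Main obstacle.} The delicate part is making the equality case in Step 1 rigorous. It has to be organized so that tightness of the averaged inequality genuinely forces the covering of $[a,b-1]$ by the two closed sets. After that, the connectedness argument from the earlier remark finishes the proof.
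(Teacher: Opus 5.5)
Your route is essentially the paper's. The disjointness of $A\cap[a,b-1]$ and $(A\cap[a+1,b])-1$ inside $[a,b-1]$ is exactly the paper's key comparison. The paper splits off $s\ge 2$ and phrases the contradiction as ``$I$ is not balanced''. The two facts it takes from balancedness, $d_A([a,b-1])\ge 0$ and $d_A([a+1,b])\ge 0$, are equivalent (given $d_A(I)=d(A)$) to your bounds $d_A([b-1,b]),d_A([a,a+1])\le d(A)$. Your Step 2 is the paper's endpoint argument.

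However, the sentence meant to rule out the degenerate equality case does not work. The only count your chain produces is $|A\cap[a,b-1]|+|A\cap[a+1,b]|=s-1$. This is perfectly consistent with $A\cap[a,b-1]=[a,b-1]$ and $A\cap[a+1,b]=\emptyset$, so ``the other set would need measure $0$ while the first has measure $s-1$'' is no contradiction. Nor is this exactly the situation of the remark after Lemma \ref{discrepancy observation}. There the two closed sets sit in the shifted intervals $[a,b]$ and $[a-\delta,b-\delta]$, so neither can fill $[a-\delta,b]$; here both sit inside $[a,b-1]$. For $s\ge 2$ the configuration is impossible, since both $[a,b-1]$ and $[a+1,b]$ contain $[a+1,b-1]$. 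For $1<s<2$, however, the reason you give does not exclude it.

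The repair comes straight from your own Step 2. The trimming argument there works for any maximizing interval of positive length and shows that $a,b\in A$. Hence $a\in A\cap[a,b-1]$ and $b-1\in(A\cap[a+1,b])-1$, so the two disjoint closed sets are both non-empty. By connectedness they cannot cover $[a,b-1]$, so the complement of their union is a non-empty relatively open set of positive measure. That gives strict inequality and hence $d_A(I)<d(A)$. Equivalently, if $A\cap[a+1,b]=\emptyset$ then $d_A([a,a+1])=d_A(I)+(s-1)>d_A(I)$, and symmetrically if $A\cap[a,b-1]=\emptyset$.

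The paper's case $a+1<b<a+2$ also asserts the strict inequality by connectedness without addressing this configuration. Its conclusion still holds, because in that configuration $d_A([a+1,b])<0$ or $d_A([a,b-1])<0$ outright, so $I$ is not balanced anyway.
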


\begin{proof}
Let $I=[a,b]$. If $b\geq a+2$, then let $A_1=A\cap[a,a+1]$ and $A_2=A\cap[a+1,a+2]$. Then $A_1$ and $A_2-1$ are disjoint closed subsets of $[a,a+1]$, so since $[a,a+1]$ is connected, it follows that $|A_1|+|A_2|<1$ and therefore that $d_A([a,a+1])+d_A([a+1,a+2])<0$. It follows that $I$ is not balanced, which contradicts the fact that a discrepancy-maximizing interval is balanced.

If $a+1< b<a+2$, then let $A_1=A\cap[a,b-1]$, $A_2=A\cap[b-1,a+1]$ and $A_3=A\cap[a+1,b]$. From the same argument, as $A_1$ and $A_3-1$ are disjoint, we obtain that $|A_1|+|A_3|<b-a-1$ and therefore that $d_A([a,b-1])+d_A([a+1,b])<0$, which again contradicts the fact that a discrepancy-maximizing interval is balanced. 

Finally, if $b=a+1$, then $a$ and $b$ cannot both belong to $A$, since
$A\cap(A-1)=\emptyset$. But a balanced interval of positive length must contain both of its endpoints. Thus $I$ is not balanced, once again contradicting discrepancy-maximality.
\end{proof}

For the next lemma we introduce a piece of notation that will be used throughout this section. Let $A$ be a measurable subset of $\R$ and let $z\in\R$. Then we set
\[m_A^-(z)=\sup_{x\leq z}d_A([x,z])\]
and
\[m_A^+(z)=\sup_{x\geq z}d_A([z,x]).\]
In words, $m_A^-(z)$ is the largest discrepancy, with respect to $A$, of any interval with $z$ as its right end-point, and $m_A^+(z)$ is the largest discrepancy of an interval with $z$ as its left end-point. Note that if $z=\max A$, then $m_A^-(z)$ is the right-discrepancy $d_R(A)$ of $A$, and if $z=\min A$ then $m_A^+(z)$ is the left-discrepancy $d_L(A)$.

The following lemma will be useful for showing that $m_A^-(z)=0$ in a number of situations.
\begin{lem}\label{mzero}
Let $A$ be a sum-free union of finitely many finite closed intervals, let $0\leq z\leq x+1$, and let $I=[a,b]$ be a discrepancy-maximizing interval for $A\cap[0,x+1]$. Then $m_A^-(z)=0$ whenever $a+z\in A$ or $b-z\in A$.
\end{lem}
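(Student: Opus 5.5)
The plan is to show directly that $d_A([u,z])\leq 0$ for every $u\leq z$. Since the degenerate choice $u=z$ gives discrepancy $0$, this yields $m_A^-(z)=0$. (I write $u$ for the left endpoint to avoid a clash with the parameter $x$ in the statement.) The only inputs are:
\begin{itemize}
\item sum-freeness, used through a reflection/translation argument as in Lemma \ref{discrepancy observation};
\item the fact that a discrepancy-maximizing interval $I$ for $A':=A\cap[0,x+1]$ is balanced with respect to $A'$, and that $d_{A'}(J)\leq d_{A'}(I)$ for every interval $J$.
\end{itemize}
Because $z\leq x+1$, every subinterval of $[0,z]$ has the same discrepancy with respect to $A$ as with respect to $A'$. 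We may also assume $u\geq 0$, since $A\subset[1,\infty)$, so moving $u$ up to $0$ only increases $d_A([u,z])$. Write $L=z-u$ and $s=b-a$.

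First suppose $w:=a+z\in A$. For $c\in A\cap[u,z]$ the point $w-c$ cannot lie in $A$, since otherwise $c+(w-c)=w$ would violate sum-freeness; if $w-c\leq 0$ it is not in $A$ anyway. Hence the reflected set $w-(A\cap[u,z])$ is a subset of $[a,a+L]$ of measure $|A\cap[u,z]|$ that is disjoint from $A$. Therefore
\[
|A\cap[a,a+L]|\leq L-|A\cap[u,z]|,
\qquad\text{that is,}\qquad
d_A([u,z])+d_A([a,a+L])\leq 0 .
\]
We now split into two cases according to the size of $L$.
\begin{itemize}
\item If $L\leq s$, then $[a,a+L]$ is a left subinterval of the balanced interval $I$. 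So $d_A([a,a+L])\geq 0$, and hence $d_A([u,z])\leq 0$.
\item If $L>s$, apply the same reflection only to $[z-s,z]$. This gives $d_A([z-s,z])\leq -d_A(I)$, because the reflected set lies in $I$, so $d_A([z-s,z])+d_A(I)\leq 0$. On the other hand, $[u,z-s]\subset[0,x+1]$, so maximality of $I$ gives $d_A([u,z-s])\leq d_A(I)$. Adding these, $d_A([u,z])\leq 0$.
\end{itemize}

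The case $w:=b-z\in A$ is symmetric, using translation instead of reflection. For $c\in A\cap[u,z]$ we have $c+w\notin A$, since $c$ and $w$ both lie in $A$. So $A\cap[u,z]+w$ is a subset of $[b-L,b]$ that is disjoint from $A$, which gives $d_A([u,z])+d_A([b-L,b])\leq 0$. If $L\leq s$, right-balancedness of $I$ finishes. If $L>s$, apply this to $[z-s,z]$, which translates onto $I$ exactly, and use maximality on $[u,z-s]$ as before.

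The main point needing care is the case $L>s$, where the comparison interval sticks out of $I$. The split at $z-s$, using global maximality of $d_{A'}(I)$ on the left piece, is what I expect to handle it. I would also check the measure-zero boundary effects, but they are harmless because only measures are compared.
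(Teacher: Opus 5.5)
Your proof is correct, and it is more direct than the paper's.

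\textbf{The paper's route.} It argues by contradiction. It takes $r$ with $d_A([r,z])=m_A^-(z)>0$ and observes that $J=[r,z]$ is then left balanced with $d(A\cap J)\le d_A(I)$. It then applies the covering results Lemma~\ref{main step} and Corollary~\ref{main step sums}, which give $(A\cap I)+(A\cap J)\supseteq[a+r,a+r+2|A\cap J|]$ and $(A\cap I)-(A\cap J)\supseteq[b-r-2|A\cap J|,b-r]$. Since $2|A\cap J|>z-r$, these intervals contain $a+z$ and $b-z$ respectively, contradicting sum-freeness.

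\textbf{Your route.} You prove $d_A([u,z])\le 0$ for every $u$ by reflecting (or translating) $A\cap[u,z]$ into $A^c$. When $z-u\le s$ you use the balancedness of $I$. When $z-u>s$ you split at $z-s$, so that the image is exactly $I$ and the overhang $[u,z-s]$ is controlled by the global maximality of $d_A(I)$. That split is precisely the case distinction inside the proof of Lemma~\ref{main step}, where $d(B)\le d_A(I)$ absorbs the part of $J$ not covered by the translate of $I$. In effect you have inlined the one instance of the main-step machinery that is needed, in its contrapositive measure form.

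\textbf{What each buys.} Your proof is self-contained: it needs no choice of a maximizing $r$ and no interval-covering statement. It also makes no appeal to closedness, because it compares measures only. The paper's route produces actual points of intersection, and in Lemma~\ref{intersect balanced} and Lemma~\ref{main step} that step uses that the sets are closed. So your argument goes through for measurable $A$ as well. In exchange, the paper's proof is a short application of tools it has already built and reuses throughout the section.

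Both proofs tacitly use that $A$ has no negative elements. You need it to move $u$ up to $0$; the paper needs it to get $d(A\cap J)\le d_A(I)$. This holds in the paper's context, where $\min A=1$.
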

\begin{proof}
         Since $I$ is discrepancy maximizing, it is both left balanced and
    right balanced with respect to $A$. Suppose, for a contradiction, that $m_A^-(z)>0$. Choose $r\leq z$ such that
    $$
    2|A\cap [r,z]|-(z-r)=m_A^-(z)>0.
    $$
    By the choice of $r$, the interval $J=[r,z]$ is left balanced. Moreover, since $I$ is discrepancy maximizing, we have $\disc_A(I)\geq d(A\cap J)$. Applying Lemma \ref{main step} to $I$ and $J$ with $B=A\cap J$ gives
$$
    (A\cap J)-(A\cap I)
    \supseteq [r-b,r-b+2|A\cap J|].
$$
Reflecting, we obtain
$$
    (A\cap I)-(A\cap J)
    \supseteq [b-r-2|A\cap J|,b-r].
$$
Similarly, applying Corollary \ref{main step sums} gives
$$
    (A\cap I)+ (A\cap J)
    \supseteq [a+r,a+r+2|A\cap J|].
$$
    Since $2|A\cap J|>z-r$, the first interval contains $b-z$, while the second interval contains $a+z$, contradicting the sum-free hypothesis. Therefore,   $m_A^-(z)=0$.
\end{proof}
The following lemma allows us to obtain lower bounds for the portions of $A+A$ and $A-A$ that lie in certain intervals determined by the discrepancy-maximizing interval $I$ and the quantity $m_A^-(\beta)$.
\begin{lem}\label{minterv}
Let $A$ be a finite union of finite closed intervals, let $I=[a,b]$ be discrepancy maximizing with respect to $A\cap[0,x+1]$,
and let $[\alpha,\beta]\subseteq[0,x+1]$. Then
$$
    |(A-A)\cap [b-\beta-m_A^-(\beta),\, b-\alpha]|
    \geq 2|A\cap[\alpha,\beta]|=2(\F{\beta}-\F{\alpha}).
$$
The corresponding sumset estimate is
$$
    |(A+A)\cap [a+\alpha,\, a+\beta+m_A^-(\beta)]|
    \geq 2|A\cap[\alpha,\beta]|=2(\F{\beta}-\F{\alpha}).
$$
In particular, if $m_A^-(\beta)=0$, then
$$
    |(A-A)\cap [b-\beta,\, b-\alpha]|
    \geq 2|A\cap[\alpha,\beta]|
$$
and
$$
    |(A+A)\cap [a+\alpha,\, a+\beta]|
    \geq 2|A\cap[\alpha,\beta]|.
$$
\end{lem}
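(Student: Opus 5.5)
We follow the pattern of Corollary \ref{key bound}, with $B=A\cap[\alpha,\beta]$ in place of $B$ and $I$ playing the role of the left or right balanced interval. Since $I$ is discrepancy maximizing for $A\cap[0,x+1]$ and $[\alpha,\beta]\subseteq[0,x+1]$, we have $d_A(I)\geq d(A\cap[\alpha,\beta])$. Moreover $I$ is balanced, so it is both left and right balanced, and therefore every hypothesis needed for Lemma \ref{main step} and Corollary \ref{main step sums} is available with $B=A\cap[\alpha,\beta]$.

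For the difference set, we first partition $B$ into the pieces $B\cap J_1<\dots<B\cap J_k$, where the $J_i=[c_i,d_i]$ are the maximal left-balanced intervals with respect to $B$. These intervals are disjoint by Corollary \ref{balanced intervals}. Applying Lemma \ref{main step} to $I$ (right balanced) and each $J_i$ (left balanced), we get
\[(B\cap J_i)-(A\cap I)\supseteq[c_i-b,\,c_i-b+2|B\cap J_i|].\]
Reflecting through the origin, $(A\cap I)-(B\cap J_i)$ contains the interval $K_i=[b-c_i-2|B\cap J_i|,\,b-c_i]$. The same argument as in Corollary \ref{key bound} shows that $c_{i+1}>c_i+2|B\cap J_i|$, since otherwise $J_i$ and $J_{i+1}$ would merge into a larger left-balanced interval. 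Hence the $K_i$ are pairwise disjoint and their total length is $2|B|$.

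It remains to locate the $K_i$ inside the target window. The upper end satisfies $b-c_i\leq b-\alpha$ because $c_i\geq\alpha$. For the lower end, the smallest value occurs at $i=k$ and equals $b-\beta-d_B([c_k,\beta'])$, where $\beta'=\max B\leq\beta$. We need $d_B([c_k,\beta'])\leq m_A^-(\beta)$. This follows because $d_B([c_k,\beta])=d_A([c_k,\beta])$, since $B$ and $A$ agree on $[\alpha,\beta]$ and $c_k\geq\alpha$. Also $d_B([c_k,\beta'])\leq d_B([c_k,\beta])$ is false in general, so instead we should directly take $\beta'=\beta$ in the statement, which is harmless: $b-c_k-2|B\cap J_k|\geq b-\beta-d_A([c_k,\beta])\geq b-\beta-m_A^-(\beta)$, using $2|B\cap J_k|=2|A\cap[c_k,\beta]|=d_A([c_k,\beta])+(\beta-c_k)$. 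This places all $K_i$ in $[b-\beta-m_A^-(\beta),b-\alpha]$, giving the difference-set bound.

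The sumset bound is the same argument with the first part of Corollary \ref{main step sums}, using that $I$ is left balanced. It gives $(A\cap I)+(B\cap J_i)\supseteq[a+c_i,\,a+c_i+2|B\cap J_i|]$, and these are the disjoint intervals $a+c_i+[0,2|B\cap J_i|]$. The same endpoint computation puts them in $[a+\alpha,\,a+\beta+m_A^-(\beta)]$. The special case $m_A^-(\beta)=0$ is immediate. The only delicate point is the endpoint bookkeeping in the third paragraph: one must use that the last block $J_k$ ends at $\max B\leq\beta$ and bound its discrepancy by $m_A^-(\beta)$ through $d_A([c_k,\beta])$, rather than through $d_R(B)$.
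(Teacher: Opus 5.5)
Your proof is correct and is essentially the paper's argument. The paper cites Corollary \ref{key bound} with $B=A\cap[\alpha,\beta]$ (and its reflection for the difference set) and then checks $\max B+d_R(B)\leq\beta+m_A^-(\beta)$. You instead inline that corollary's proof block by block, computing the extreme endpoint directly via $c_k+2|B\cap J_k|=\beta+d_A([c_k,\beta])$.

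Two small remarks on the write-up. First, your intermediate expression $b-\beta-d_B([c_k,\beta'])$ should read $b-\beta'-d_B([c_k,\beta'])$. Second, routing through $d_R(B)$ does in fact work, since $A$ has no points in $(\beta',\beta]$ and hence $d_R(B)\leq m_A^-(\beta)+\beta-\beta'$. Your final chain of inequalities is correct as written.
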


\begin{proof}
The two statements are direct consequences of Corollary \ref{key bound}. We begin with the sum-set part. To prove this we apply the first part of Corollary \ref{key bound} to $I$ and $A$, taking $B$ to be $A\cap[\a,\be]$. The conditions of Corollary \ref{key bound} are easily seen to hold, and the conclusion is that
\[\big|(B+(A\cap I))\cap[a+\min B,a+\max B+d_R(B)]\big|\geq 2|B|.\]
But $B+(A\cap I)\subset A+A$, $\min B\geq\a$, $d_R(B)=m_A^-(\max B)\leq m_A^-(\beta)+\beta-\max B$, and $2|B|=2|A\cap[\a,\be]|=2(\F{\be}-\F{\a})$, so we obtain the desired estimate.

For the difference-set part we apply the second part of Corollary \ref{key bound}, this time with $B=-(A\cap[\a,\be])$. Again the conditions are easily seen to hold, and this time the conclusion is that
\[\big|(B+(A\cap I))\cap[b+\min B-d_L(B), b+\max B]\big|\geq 2|B|.\]
Now we have that $B+(A\cap I)\subset A-A$ and $\max B\leq-\a$. Also, $d_L(B)=d_R(-B)$, which we saw above was at most $m_A^-(\be)+\beta-\max(-B)=m_A^-(\be)+\beta+\min B$, and of course $|B|$ is still equal to $|A\cap[\a,\be]|$. Therefore, we obtain the desired estimate in this case too.
\end{proof}

We shall use the following simple consequence of sum-freeness several times:
if $J$ is an interval of length at most $2$, then $|A\cap J|\leq 1$.
Indeed, it is enough to consider the case $J=[r,r+2]$, and then the sets
$A\cap[r,r+1]$ and $(A\cap[r+1,r+2])-1$ are disjoint subsets of $[r,r+1]$.
\subsection{Setup and the first gap}

To prove Lemma \ref{key lemma}, we shall argue by contradiction. Let us fix, once and for all in this section, a finite union $A$ of finite closed intervals with $\min A =1$, and let us fix $f$ to be the corresponding distribution function of the measure $\mathbbm 1_A(x)dx$.
 
\begin{lem}\label{positive-triple-reduction} If there exists $z\geq 1$ such that 
$$ 
\F{z-1}+\F{z}+\F{z+1}>z, 
$$
then there exist $x\geq 1$ and $\epsilon>0$ such that
$$ 
\F{y-1}+\F{y}+\F{y+1}>y \qquad\text{for every } y\in[x-\epsilon,x+\epsilon], 
$$ 
and 
$$ 
[x-1-\epsilon,x-1+\epsilon]\cup[x+1-\epsilon,x+1+\epsilon]\subseteq A. 
$$ 
\end{lem}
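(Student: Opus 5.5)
The plan is to study the continuous function $g(y)=\F{y-1}+\F{y}+\F{y+1}-y$ on $[1,\infty)$, where $f$ is extended by $\F{u}=0$ for $u\le 0$. Since $f$ is Lipschitz, $g$ is continuous, and away from finitely many points its derivative is
\[
g'(y)=\b1_A(y-1)+\b1_A(y)+\b1_A(y+1)-1 .
\]
Everything rests on one observation: because $1\in A$ and $A$ is sum free, $a\in A$ forces $a+1\notin A$. So $y-1$ and $y$ are never both in $A$, and neither are $y$ and $y+1$. Among the three points $y-1,y,y+1$, at most two lie in $A$, and if two do, they are $y-1$ and $y+1$. Hence $g'(y)\in\{-1,0,1\}$ almost everywhere, and
\[
g'(y)=1 \iff y-1\in A \text{ and } y+1\in A .
\]
Call this set $S$. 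Since $A$ is a finite union of closed intervals, $S=(A+1)\cap(A-1)$ is also a finite union of closed intervals.

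Next I will use the boundary value at $1$: $g(1)=\F{0}+\F{1}+\F{2}-1=\F{2}-1\le 0$, because $|A\cap[0,2]|=|A\cap[1,2]|\le 1$. Now take $z\ge 1$ with $g(z)>0$, so that $z>1$, and set
\[
c=\inf\{y\in[1,z]: g>0 \text{ on } [y,z]\}.
\]
By continuity, $g(c)=0$, $c<z$, and $g>0$ on $(c,z]$. Then
\[
0<g(z)-g(c)=\int_c^z g'(y)\,\mathrm dy\le |S\cap(c,z)|,
\]
since $g'\le 1$, with $g'=1$ only on $S$ and $g'\le 0$ elsewhere. So $S\cap(c,z)$ has positive measure. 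Being a finite union of intervals, it must contain a nondegenerate interval $[p,q]\subseteq(c,z)$ with $[p,q]\subseteq S$.

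Finally, let $x=(p+q)/2$ and choose $\epsilon>0$ with $\epsilon<(q-p)/2$. Then $[x-\epsilon,x+\epsilon]\subseteq[p,q]\subseteq(c,z)$, so $g>0$ there, which is the first conclusion. Also $[x-\epsilon,x+\epsilon]\subseteq S$, so $[x-1-\epsilon,x-1+\epsilon]\subseteq A$ and $[x+1-\epsilon,x+1+\epsilon]\subseteq A$, which is the second conclusion. We have $x>c\ge 1$.

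The only real point is the derivative classification, namely that $g'=1$ exactly when $y\pm1\in A$. Everything else is continuity of $g$ together with the finite-union structure of $A$, which upgrades positive measure of $S\cap(c,z)$ to an interval with nonempty interior. I do not expect a serious obstacle beyond correctly justifying $g(1)\le 0$, so that the component $(c,z]$ really starts at a zero of $g$.
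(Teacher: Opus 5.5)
Your proof is correct and follows essentially the same route as the paper. The paper also takes $H(x)=f(x-1)+f(x)+f(x+1)-x$, uses $H(1)\le 0$ and the last zero of $H$ before $z$, and notes that sum-freeness with $1\in A$ forces $H'\le 0$ almost everywhere outside $(A+1)\cap(A-1)$; the finite-union structure then yields an interval of positive length. Your direct bound $\int_c^z g'\le |S\cap(c,z)|$ replaces the paper's short contradiction argument, which is only a cosmetic difference.
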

\begin{proof}
     Let
    $$
        H(x)=\F{x-1}+\F{x}+\F{x+1}-x.
    $$
    Since $f$ is $1$-Lipschitz, $H$ is Lipschitz, and therefore absolutely continuous on every compact interval. Moreover, for almost every $x$,
    $$
        H'(x)=1_A(x-1)+1_A(x)+1_A(x+1)-1.
    $$
    Since $A$ has its minimum in $1$, we have $H(1)\leq 0$.
    Choose $z\geq 1$ with $H(z)>0$, and let
    $$
        \alpha=\sup\{r\in[1,z]:H(r)\leq 0\}.
    $$
    Then $H(\alpha)=0$ and $H>0$ on $(\alpha,z]$.
    Suppose, for a contradiction, that the set of $x\in(\alpha,z)$ for which at least two among $x-1,x,x+1$ belong to $A$ has measure zero. Then $H'\leq 0$ almost everywhere on $(\alpha,z)$. Hence, for every $\alpha<r<z$,
    $$
        H(z)-H(r)=\int_r^z H'(u)\,du\leq 0.
    $$
    Letting $r\to \alpha$, we get $H(z)\leq H(\alpha)=0$, a contradiction.
    Therefore,   the set of $x\in(\alpha,z)$ for which at least two among $x-1,x,x+1$ belong to $A$ has positive measure. Since $A$ is sum free and $1\in A$, the only possible pair is $x-1,x+1$.
    Since $A$ is a finite union of intervals, the set
$$
    \{x\in(\alpha,z):x-1\in A,\ x+1\in A\}
    =
    (\alpha,z)\cap(A+1)\cap(A-1)
$$
is a finite union of intervals. Since it has positive measure, it contains an interval of positive length and this concludes the proof.
\end{proof}

For the rest of the section, let us fix the following.
\begin{itemize}
\item $x$ and $\e$ are positive real numbers with the property guaranteed by Lemma \ref{positive-triple-reduction} (as well as the property that $x-1, x+1\in A$, which is given by the proof). 
\item $t=d(A\cap[0,x+1])$.
\item $I=[a,b]$ is a subinterval of $[0,x+1]$ with $\disc_A(I)=t$.
\item $s=b-a$ is the length of $I$.
\item $y=x+1-a$ is the distance from $\min I$ to $\max(A\cap[0,x+1])$.
\end{itemize}
Since $A$ is a union of finitely many closed intervals, we may choose a discrepancy-maximizing interval $I=[a,b]$ such that $a,b\in A$. Also, since $I$ is a discrepancy-maximizing interval and $A\cap(A-1)=\emptyset$, Lemma \ref{s<1} implies that $s<1$. Thus, $0<t\leq s<1$.

The next lemma shows that the failure of the desired inequality forces some mass to lie in the interval $[x,x+1]$. In the extremal period-3 example, where one should think of $t$ as being essentially $1$, the resulting surplus $1-t$ is morally zero. Even so, it will be enough to force the first gap.

\begin{lem} \label{fx+1-fx}
$\F{x+1}-\F{x}>1-t$.
\end{lem}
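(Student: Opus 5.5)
The plan is to redo the argument of Lemma \ref{weaker inequality} at the point $x$, but to bound the discrepancy term by $t$ rather than by $1$, and then to compare the result with the assumed failure of the inequality at $x$.

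First, let $A_x=A\cap[0,x]$. Since $\min A=1$, every element of $A_x$ lies in $[1,x]$. Hence the positive part of $A_x-A_x$ is contained in $[0,x-1]$. Since $A$ is sum free, $A\cap(A-A)=\emptyset$, so this positive part is disjoint from $A_{x-1}=A\cap[0,x-1]$. The set $A_x-A_x$ is symmetric, so by Theorem \ref{main2} its positive part has measure at least
\[\tfrac12\bigl(4|A_x|-2d(A_x)\bigr)=2\F{x}-d(A_x).\]
Both this positive part and $A_{x-1}$ lie in $[0,x-1]$ and are disjoint, so
\[2\F{x}-d(A_x)+\F{x-1}\leq x-1.\]

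Second, I bound $d(A_x)$. Since $A_x\subseteq A\cap[0,x+1]$, every interval $J$ satisfies $2|A_x\cap J|-|J|\leq 2|A\cap[0,x+1]\cap J|-|J|$. Taking suprema gives $d(A_x)\leq d(A\cap[0,x+1])=t$. Substituting this into the previous display yields
\[\F{x-1}+2\F{x}\leq x-1+t.\]
This is the sharpened version of Lemma \ref{weaker inequality}: the discrepancy contributes $t$ instead of $1$.

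Finally, I combine this with the choice of $x$ from Lemma \ref{positive-triple-reduction}, which gives $\F{x-1}+\F{x}+\F{x+1}>x$. Writing $\F{x-1}+\F{x}+\F{x+1}=\bigl(\F{x-1}+2\F{x}\bigr)+\bigl(\F{x+1}-\F{x}\bigr)$, we obtain
\[x<x-1+t+\F{x+1}-\F{x},\]
which rearranges to $\F{x+1}-\F{x}>1-t$, as required.

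There is no genuine obstacle here. The only point to check carefully is that the discrepancy bound passes to subsets. This holds because $d$ is defined as a supremum over intervals of $2|A\cap I|-|I|$, and that quantity is monotone in $A$. Theorem \ref{main2} applies to $A_x$ directly, since $A_x$ is a finite union of closed intervals.
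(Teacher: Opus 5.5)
Your proof is correct and is essentially the paper's argument: the paper applies Theorem \ref{offdiagonal main2} to $A\cap[1,x]$ and $-(A\cap[1,x])$, which is the difference-set case of Theorem \ref{main2} that you invoke. It then uses $d(A\cap[1,x])\le t$, symmetry and sum-freeness to get $\F{x-1}+2\F{x}\le x-1+t$, and subtracts this from the assumed failure of the inequality at $x$, exactly as you do.
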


\begin{proof}
Applying Theorem \ref{offdiagonal main2} to $A\cap[1,x]$ and $-(A\cap[1,x])$ and using the central symmetry of $(A\cap[1,x])-(A\cap[1,x])$, we have that 
\begin{equation*}
    |(A-A)\cap [0,x-1]|\geq 2|A\cap[1,x]|-t=2\F{x}-t,
\end{equation*}
and since $A$ is sum free we have that $x-1\geq |(A-A)\cap [0,x-1]|+\F{x-1}$. Therefore,
\begin{equation*}
    \F{x-1}+2\F{x}\leq x-1+t.
\end{equation*}
Since $\F{x-1}+\F{x}+\F{x+1}>x$, this implies that $\F{x + 1} - \F{x} > 1 - t$.
\end{proof}
We now use the excess mass in $[x,x+1]$ to force the first gap. Any point of $A$ in $[y-1,y-s]$ would translate $A\cap I$ into $[x,x+1]$, leaving too little room for the mass guaranteed by the previous lemma.
\begin{lem}\label{gapy}
    $A\cap[y-1,y-s]=\varnothing$.
\end{lem}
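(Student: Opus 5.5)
The plan is a direct contradiction argument: a point of $A$ in $[y-1,y-s]$ would let us translate $A\cap I$ into $[x,x+1]$ and thereby bound $\F{x+1}-\F{x}$ from above, against Lemma \ref{fx+1-fx}. So suppose, for a contradiction, that there is some $z\in A$ with $y-1\leq z\leq y-s$.

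First I will locate the translate $z+I$. Since $y=x+1-a$ and $b=a+s$, the bounds on $z$ give
\[a+z\geq a+y-1=x\qquad\text{and}\qquad b+z\leq b+y-s=a+y=x+1.\]
Hence $z+I=[a+z,b+z]\subseteq[x,x+1]$.

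Next I use sum-freeness. Every point of $z+(A\cap I)$ is a sum of two elements of $A$, so it lies in $A+A$ and therefore not in $A$. Thus $A\cap[x,x+1]$ is contained in $[x,x+1]\setminus\big(z+(A\cap I)\big)$. Since $d_A(I)=t$ and $|I|=s$, we have $|A\cap I|=(s+t)/2$. Taking measures gives
\[\F{x+1}-\F{x}=|A\cap[x,x+1]|\leq 1-\frac{s+t}{2}.\]

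Finally, $t=d_A(I)\leq|I|=s$, so $(s+t)/2\geq t$ and therefore $\F{x+1}-\F{x}\leq 1-t$. This contradicts the strict inequality $\F{x+1}-\F{x}>1-t$ of Lemma \ref{fx+1-fx}, and the lemma follows.

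I do not expect a genuine obstacle. The only points needing care are that sets of measure zero (such as endpoints) are irrelevant to the measure count, and that the endpoint conditions $z=y-1$ and $z=y-s$ still give $z+I\subseteq[x,x+1]$, which the displayed computation confirms. I would still check that Lemma \ref{fx+1-fx} is the strict inequality, since the contradiction relies on it.
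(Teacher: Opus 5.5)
Your proposal is correct and is essentially the paper's own argument: a point $z\in A\cap[y-1,y-s]$ gives $z+(A\cap I)\subseteq[x,x+1]\setminus A$, so $[x,x+1]\setminus A$ has measure at least $|A\cap I|=(s+t)/2\geq t$, which contradicts the strict inequality $\F{x+1}-\F{x}>1-t$ of Lemma~\ref{fx+1-fx}. Your endpoint computations and the bound $t\leq s$ match the paper's reasoning exactly.
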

\begin{proof}
Suppose that $r\in A\cap[y-1,y-s]$. Since $r+a\geq x$ and $r+b\leq x+1$, we have
$$
r+(A\cap I)\subseteq [x,x+1].
$$
Since $r\in A$ and $A$ is sum free, the set $r+(A\cap I)$ is disjoint from $A$, and therefore
$$
|[x,x+1]\setminus A|\geq |A\cap I|=\frac{s+t}{2}\geq t.
$$
But $\F{x+1}-\F{x}>1-t$, by Lemma \ref{fx+1-fx}, so $|[x,x+1]\setminus A|<t$, a contradiction.
\end{proof}

\subsection{Pushing the maximizing interval away from $x$}

In this subsection we show that the discrepancy-maximizing interval $I=[a,b]$ must lie a definite distance to the left of $x$. In particular, neither $a$ nor $b$ can be too close to $x$. We also show that $y-1$ lies in a gap of length at least $1$.
\begin{lem}
    We have $y > 1 + s$. Equivalently, $b < x$. 
\end{lem}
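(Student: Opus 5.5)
We argue by contradiction: suppose $b\geq x$. Since $s<1$, this forces $a=b-s>x-1$, so $I\subseteq(x-1,x+1]$. Sum-freeness together with $1\in A$ and $x+1\in A$ gives $x\notin A$. Lemma \ref{positive-triple-reduction} even gives $[x-\e,x+\e]\cap A=\emptyset$, because $1+[x-\e,x+\e]\subseteq A$. Since we chose $a,b\in A$, we in fact have $b>x+\e$. So the maximizing interval straddles the gap at $x$, with $a\in(x-1,x)$ and $b\in(x,x+1]$. The aim is to show that such a placement of $I$ lets us improve the estimate in Lemma \ref{fx+1-fx} enough to contradict it. (Lemma \ref{gapy} gives nothing useful here, because when $b>x$ the interval $[y-1,y-s]$ lies entirely below $1=\min A$.)

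The first step is to extract vanishing of $m_A^-$ from Lemma \ref{mzero}. Take $z_0=b-(x-1)\in(1,2]$. Then $b-z_0=x-1\in A$, so $m_A^-(z_0)=0$. This says that every interval ending at $z_0$ has non-positive discrepancy, and in particular $2\F{z_0}\leq z_0-1$. More generally, I expect to apply Lemma \ref{mzero} at further points $z$ for which $b-z$ or $a+z$ lands in $A\cap[x-1-\e,x-1+\e]$ or in $A\cap[x+1-\e,x+1]$. This should supply a whole range of $\be$ with $m_A^-(\be)=0$.

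The second step redoes the computation of Lemma \ref{fx+1-fx} with these sharper inputs. Instead of applying Theorem \ref{offdiagonal main2} to all of $A\cap[1,x]$, we apply Lemma \ref{minterv} to $[\a,\be]=[1,\be]$, where $\be$ is one of the points with $m_A^-(\be)=0$ and $b-\be$ sits near $x-1$. This gives $|(A-A)\cap[b-\be,b-1]|\geq 2(\F{\be}-\F{1})$ without losing $t$. The remaining part of $A\cap[1,x]$ is then handled by Theorem \ref{offdiagonal main2} as before, and the two pieces of $A-A$ are shown to be essentially disjoint inside $[0,x-1]$. Since $A-A$ is disjoint from $A$, adding $\F{x-1}$ should give an inequality of the form $\F{x-1}+2\F{x}\leq x-1+t-c$, where $c>0$ comes from the part of $I$ lying to the right of $x$. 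Equivalently, the interval $[x,b]$ carries at least $\F{b}-\F{x}$ of the mass counted in $t$. Combining this with $\F{x-1}+\F{x}+\F{x+1}>x$ should contradict the bound $|A\cap[x-1,x+1]|\leq1$, which holds because that interval has length $2$.

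I expect the main obstacle to be the bookkeeping in this second step. The $t$ lost in Lemma \ref{fx+1-fx} has to be recovered exactly from the portion $A\cap[x,b]$ of $I$, and the inequalities involved are tight in the period-3 example. If the single choice $z_0$ does not suffice, I would next try using both forms of Lemma \ref{minterv} simultaneously. The sumset interval $[a+1,a+\be]$ lies above $x$, so it must avoid $A\cap[x,x+1]$, and this would give an independent upper bound on $\F{x+1}-\F{x}$ to play against Lemma \ref{fx+1-fx}.
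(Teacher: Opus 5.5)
Your preliminary observations are mostly correct, although nothing you say justifies $a<x$, since $b\geq x$ does not exclude $I\subseteq(x+\e,x+1]$. That point is minor. The real problem is the second step, which carries the whole argument and does not work.

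Every point $z$ you propose to feed into Lemma \ref{mzero} has $b-z$ or $a+z$ close to $x\pm1$. Since $a>x-1$ and $b\leq x+1$, this forces $z\leq 2+\e$; for example $z_0=b-x+1\in(1,2]$. Applying Lemma \ref{minterv} to $[1,\be]$ with such a $\be$ has two defects:
\begin{enumerate}
\item The resulting piece of $A-A$ lies in $[b-\be,b-1]\subseteq[x-1-\e,b-1]$, essentially \emph{above} $x-1$ rather than inside $[0,x-1]$.
\item It accounts only for $A\cap[1,\be]$, which has measure at most $(\be-1)/2$.
\end{enumerate}
The bulk $A\cap[\be,x]$ is still handled by Theorem \ref{offdiagonal main2} and still loses the full $t$. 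So no gain $c>0$ appears, and the final comparison with $|A\cap[x-1,x+1]|\leq 1$ is never carried out. As written, the proposal does not reach a contradiction.

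The missing idea is to make the landing point $1=\min A$ rather than $x\pm1$. Take $z=b-1\in[x-1,x]$. Since $b-z=1\in A$, Lemma \ref{mzero} gives $m_A^-(b-1)=0$. Then Lemma \ref{minterv} with $[\a,\be]=[1,b-1]$ gives $|(A-A)\cap[1,b-1]|\geq 2\F{b-1}$, with no loss of $t$ at all. Sum-freeness then gives $3\F{b-1}\leq b-2$. This is the $t$-free analogue, at the point $b-1$, of the inequality $\F{x-1}+2\F{x}\leq x-1+t$ from the proof of Lemma \ref{fx+1-fx}, which is what you were looking for.

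To finish, note that $b\geq x$ means $[b-1,x+1]$ has length at most $2$, so $\F{x+1}\leq\F{b-1}+1$. Together with $\F{x-1}\leq\F{b-1}$ and $\F{x}\leq\F{b-1}+x-b+1$, this yields
\[\F{x-1}+\F{x}+\F{x+1}\leq 3\F{b-1}+x-b+2\leq x,\]
contradicting the choice of $x$ directly. This is the paper's proof. It needs neither Lemma \ref{fx+1-fx} nor any information about where $I$ sits relative to the gap at $x$.
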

\begin{proof}
    Suppose, for a contradiction, that $b\geq x$. By Lemma \ref{mzero},
    we have $m_A^-(b-1)=0$, since $b-(b-1)=1\in A$. Therefore, by
    Lemma \ref{minterv} with $[\alpha,\beta]=[1,b-1]$,
    $$
        |(A-A)\cap[1,b-1]|\geq 2\F{b-1}.
    $$
    Since $A$ is sum free, $(A-A)\cap[1,b-1]$ is disjoint from
    $A\cap[1,b-1]$. Hence,  
    $$
        3\F{b-1}\leq b-2.
    $$
    Since $b\geq x$, the interval $[b-1,x+1]$ has length at most $2$.
    As $1\in A$ and $A$ is sum free, we have $A\cap(A+1)=\varnothing$,
    and therefore
    $$
        |A\cap[b-1,x+1]|\leq 1.
    $$
    Thus
    $$
        2\F{b-1}+\F{x+1}\leq b-1.
    $$
Also, from the definition of $f$ and the inequality $x\geq b-1$, we have
$$
    \F{x}\leq \F{b-1}+x-b+1.
$$
    Therefore
    $$
        \F{x-1}+\F{x}+\F{x+1}
        \leq \F{b-1}+\F{x}+\F{x+1}
        \leq 2\F{b-1}+\F{x+1}+x-b+1
        \leq x,
    $$
    contradicting our choice of $x$.
\end{proof}
The inequality $y>1+s$ is equivalent to $b<x$, so it already shows that the discrepancy-maximizing interval does not reach the local configuration around $x$. The next step is to strengthen this to $y>2$, or equivalently $a<x-1$, which places the whole interval $I$ strictly to the left of $[x-1,x+1]$.
\begin{lem}
    $y > 2$. 
\end{lem}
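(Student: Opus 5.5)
The plan is to argue by contradiction and let Lemma \ref{gapy} do all the work: I expect that for $y\leq 2$ the forbidden interval $[y-1,y-s]$ of that lemma must contain the point $1=\min A$, which lies in $A$. So suppose that $y\leq 2$, or equivalently $a\geq x-1$.

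The first step is to locate the endpoints of $[y-1,y-s]$ using what we already know. The assumption $y\leq 2$ gives $y-1\leq 1$ directly. The preceding lemma gives $y>1+s$, and hence $y-s>1$. Together these give
\[y-1\leq 1<y-s,\]
so $1\in[y-1,y-s]$. In particular this interval is non-degenerate, which is consistent with $s<1$ from Lemma \ref{s<1}.

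The second step is to compare with the standing assumptions. By hypothesis $\min A=1$, so $1\in A$ and therefore $1\in A\cap[y-1,y-s]$. This contradicts Lemma \ref{gapy}, which states that $A\cap[y-1,y-s]=\varnothing$. Hence $y>2$, equivalently $a<x-1$.

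I expect no real obstacle here. The only point to check is that Lemma \ref{gapy} and the lemma giving $y>1+s$ use no hypothesis on the position of $a$ that is violated when $a\geq x-1$. Neither does: the first uses only Lemma \ref{fx+1-fx} and the definition of $I$, and the second uses only the choice of $x$. So the contradiction is legitimate.

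If for some reason the gap lemma were unavailable, the fallback plan would be to mimic the proof of $b<x$. One would apply Lemma \ref{mzero} with $z=b-1$ and Lemma \ref{minterv} on $[1,b-1]$ to get $3\F{b-1}\leq b-2$. Then one would bound the mass of $A$ in $[b-1,x+1]$ by sum-freeness, using that $A\cap(A+1)=\varnothing$ and that $I\subseteq[x-1,x)$, and show that $\F{x-1}+\F{x}+\F{x+1}\leq x$. The main difficulty in that route would be the bookkeeping for the mass in an interval of length up to $3-s$. The gap argument avoids this entirely.
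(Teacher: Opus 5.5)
Your proof is correct and is essentially the paper's argument: combining $y>1+s$ with the assumption $y\leq 2$ gives $1\in[y-1,y-s]$, which contradicts Lemma \ref{gapy} because $1=\min A\in A$. The fallback route is not needed.
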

\begin{proof}
    By the previous lemma, $y > 1 + s$. If $y\leq 2$, then $1\in [y - 1, y - s]$, contradicting $1\in A$ and $A\cap [y - 1, y - s]=\varnothing$.
\end{proof}

We now sharpen Lemma \ref{gapy} by showing that $y-1$ lies in a gap of length at least $1$ in $A$. In view of Lemma \ref{gapy}, it will follow that $y-s$ does as well. Let
$$
u=\max(A\cap(-\infty,y-1]),\qquad v=\min(A\cap [y-1,\infty)).
$$

\begin{lem} \label{y - 1 gap lem}
    $v - u \geq 1$. 
\end{lem}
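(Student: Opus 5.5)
The plan is to argue by contradiction: assume $v-u<1$, and use the two points $u,v\in A$ on either side of the gap $[y-1,y-s]$ (empty by Lemma \ref{gapy}) to force too much of $[x,x+1]$ out of $A$, contradicting Lemma \ref{fx+1-fx}. First record the setup. Since $y>2$ and $1\in A$, $u$ exists and $u<y-1$. Since $x+1\in A$ and $y-s<x+1$, $v$ exists and $v>y-s$. Recall $a=x+1-y$, so $y-1+a=x$ and $y-s+b=x+1$. Hence
\[u+a<x<u+b \quad\text{and}\quad v+a<x+1<v+b,\]
where the two middle inequalities use $v-u<1$ together with $u>v-1>y-s-1$. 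So both translates $u+I$ and $v+I$ straddle part of $[x,x+1]$.

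Since $A$ is sum free, $u+(A\cap I)$ and $v+(A\cap I)$ are disjoint from $A$. Their traces on $[x,x+1]$ are $u+(A\cap[x-u,b])$ and $v+(A\cap[a,x+1-v])$. Because $I$ is discrepancy maximizing, it is balanced, so $d_A([c,b])\ge 0$ and $d_A([a,c])\ge 0$ for $c\in I$. This gives lower bounds
\[|A\cap[x-u,b]|\ge \tfrac12(u+b-x),\qquad |A\cap[a,x+1-v]|\ge \tfrac12(x+1-v-a).\]
If these two traces were disjoint, their combined measure would be at least $\tfrac12(1+s-(v-u))>\tfrac s2$. This is not yet $\ge t$, so a naive sum is not enough.

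The sharper step is to use the sumset machinery rather than just translates. Let $J^+$ be the maximal left-balanced interval of $A$ starting at $v$, and $J^-$ the maximal right-balanced interval ending at $u$. By Lemma \ref{mzero} and the definition of $t$, the relevant local discrepancies are at most $t=d_A(I)$, so Corollary \ref{main step sums} applies to the pairs $(I,J^+)$ and $(I,J^-)$. It gives
\[A+A\supset [a+v,\,a+v+2|A\cap J^+|]\quad\text{and}\quad A+A\supset [b+u-2|A\cap J^-|,\,b+u],\]
both of which are disjoint from $A$. Together with Lemma \ref{minterv} applied on $[\alpha,\beta]$ ending at $u$, this upgrades the $\tfrac12$-density lower bounds on the traces to full intervals of forbidden points. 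I then plan to show that $[x,x+1]\setminus A$ has measure at least $t$, contradicting Lemma \ref{fx+1-fx} exactly as in Lemma \ref{gapy}. If this falls short, the fallback is to show directly that one of these intervals reaches $x+1$ or $x-1$, which lie in $A$ with the $\e$-neighbourhoods from Lemma \ref{positive-triple-reduction}.

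The main obstacle is the overlap and accounting in this last step. The two forbidden regions near $x$ may overlap, and $|A\cap J^{\pm}|$ may be small. I would therefore first try the case where $J^{\pm}$ are just the component intervals of $A$ containing $u$ and $v$. I would use $v-u<1$ to show these components, shifted by $I$, cover a union of length at least $t$ inside $[x,x+1]$. If this fails, I would split according to whether $u+b\ge v+a$ (the traces overlap, so their union is an interval of length at least $(1+s-(v-u))$ minus a balanced deficit) or not. In the second case, I would apply Lemma \ref{minterv} to the block $[v,\cdot]$ to pick up the missing $t-\tfrac s2$.
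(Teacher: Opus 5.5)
\textbf{There is a genuine gap: the step that closes the argument is missing.} Your setup is sound. The inequalities $u+a<x<u+b$ and $v+a<x+1<v+b$ are correct, and the translates $u+(A\cap I)$ and $v+(A\cap I)$ are disjoint from $A$. Showing that they remove measure more than $t$ from $[x,x+1]$ would indeed contradict Lemma~\ref{fx+1-fx}.

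The problem is how you bound the traces. You use only the balancedness of $I$ (density at least $\tfrac12$ on end-subintervals), which gives $\tfrac12(1+s-(v-u))$. You then try to recover the shortfall with maximal balanced intervals $J^{\pm}$ at $u$ and $v$ via Corollary~\ref{main step sums}. This cannot work as described, for two reasons:
\begin{enumerate}
\item The forbidden intervals it produces have lengths $2|A\cap J^{\pm}|$, and nothing in your argument bounds these from below (as you note yourself). So they give no control of order $t-\tfrac s2$.
\item They only fill in short stretches next to $a+v$ and $b+u$, not the whole traces.
\end{enumerate}
In addition, with $B=A$ the hypothesis $d_A(I)\ge d(B)$ is not available, since $t$ is only $d(A\cap[0,x+1])$. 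The remaining fallbacks are not carried out, so as it stands the proposal does not prove the lemma.

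\textbf{The missing idea.} The equality $d_A(I)=t$ controls the \emph{total} amount of $I$ missed by $A$, not just local densities:
\[|I\setminus A|=s-\tfrac{s+t}{2}=\tfrac{s-t}{2}.\]
Hence
\[|A\cap[x-u,b]|\ge (u+b-x)-\tfrac{s-t}{2}\quad\text{and}\quad |A\cap[a,x+1-v]|\ge (x+1-v-a)-\tfrac{s-t}{2}.\]
There are two cases.
\begin{enumerate}
\item If the traces are disjoint (that is, $u+b\le v+a$), their total measure is at least $1-(v-u)+t>t$.
\item If they overlap, then $(u+I)\cup(v+I)\supseteq[x,x+1]$. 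The union of the traces then has measure at least $1-(s-t)>t$, because $s<1$.
\end{enumerate}
Either way $f(x+1)-f(x)<1-t$, contradicting Lemma~\ref{fx+1-fx}.

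This is the paper's argument in a different form. Replacing $A\cap I$ by $I$ in both translates costs at most $s-t$. The union $(u+I)\cup(v+I)$ misses from $[x,x+1]$ only the gap between $u+b$ and $v+a$, of length at most $\max(0,v-u-s)<1-s$. Therefore $|A\cap[x,x+1]|<(1-s)+(s-t)=1-t$. No sumset machinery is needed.
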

\begin{proof}
    Suppose, for a contradiction, that $v<u+1$. Since
$A\cap[y-1,y-s]=\varnothing$, we have $u<y-1$ and $v>y-s$.
Hence, from $v<u+1$, we get
$ u>y-s-1$.
Since $u,v\in A$ and $A$ is sum free, the sets
$$
u+(A\cap I)
\qquad\text{and}\qquad
v+(A\cap I)
$$
are disjoint from $A$. Therefore,
$$
\big|[x,x+1]\setminus ((u+(A\cap I))\cup (v+(A\cap I)))\big|
\geq \F{x+1}-\F{x}>1-t.
$$
Moreover, since $|A\cap I|=(s+t)/2$, we have $|I\backslash A|=(s-t)/2$. Thus passing from $u+(A\cap I)$ and $v+(A\cap I)$ to the full intervals $u+I$ and $v+I$ can decrease the complement in $[x,x+1]$ by at most $s-t$.
   Hence,  
    \begin{equation*}
        |[x, x + 1]\backslash([a + u, b + u]\cup [a + v, b + v])| > 1 - s. 
    \end{equation*}
 Since $a+u<x<b+u$ and $a+v<x+1<b+v$, the uncovered part of $[x,x+1]$ is contained in the gap between $b+u$ and $a+v$. Hence,  
$$
    (a+v)-(b+u)>1-s.
$$
But $(a+v)-(b+u)=v-u-s<1-s$, a contradiction.
\end{proof}
As $y > 2$, we have $u\geq 1$ and hence $v\geq 2$. The gap estimate $v-u\geq 1$ allows us to strengthen the previous lower bounds for the portion of $A-A$ contained in $[b-d,b-1]$. The next lemma is set up so that, when applied later with $c=u$, the term involving $A\cap[c,c+s]$ will disappear. 
\begin{lem} \label{another boost lem}
If $m_A^-(d) = 0$, $d\leq x+1$, $c\in A$ and $1\leq c\leq d - s$, then
$$
|(A - A)\cap [b - d, b - 1]|\geq 2\F{d} + |A\cap [a, b - m_A^-(c)]| - 2|A\cap [c, c + s]|.
$$
\end{lem}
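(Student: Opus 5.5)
The plan is to cover $[b-d,b-1]$ by three consecutive, essentially disjoint windows. Each window gets its own lower bound on the measure of $A-A$ inside it, and the three bounds add up to exactly the claimed right-hand side. The split points come from the two quantities in the statement: $c$ is where the set $A\cap[c,c+s]$ is excised, and $m_A^-(c)$ is the loss in Lemma \ref{minterv} at the right end-point $c$. Throughout write $m=m_A^-(c)$; note $0\leq m\leq s$, since a single point of $A$ gives discrepancy $0$ and $m\leq t\leq s$. The three windows, from left to right, are
\[
[b-d,\,a-c],\qquad [a-c,\,b-c-m],\qquad [b-c-m,\,b-1],
\]
using $b-c-s=a-c$.

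\emph{Left window.} Apply Lemma \ref{minterv} with $[\alpha,\beta]=[c+s,d]$. This is allowed because $c+s\leq d\leq x+1$, and since $m_A^-(d)=0$ we get
\[
|(A-A)\cap[b-d,\,b-c-s]|\geq 2(\F{d}-\F{c+s}).
\]

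\emph{Right window.} Apply Lemma \ref{minterv} with $[\alpha,\beta]=[1,c]$, which lies in $[0,x+1]$ since $c\leq d\leq x+1$. This gives
\[
|(A-A)\cap[b-c-m,\,b-1]|\geq 2\F{c},
\]
where I used that $\F{1}=0$ because $\min A=1$ and $A$ is a finite union of closed intervals.

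\emph{Middle window.} Here no discrepancy argument is needed, only the fact that $c\in A$. The translate $(A\cap[a,b-m])-c$ is contained in $A-A$ and lies in $[a-c,\,b-m-c]$, which is exactly the middle window. Its measure is $|A\cap[a,b-m_A^-(c)]|$.

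The three windows overlap only at end-points, so their contributions add:
\[
|(A-A)\cap[b-d,b-1]|\geq 2(\F{d}-\F{c+s})+|A\cap[a,b-m_A^-(c)]|+2\F{c}.
\]
Finally $\F{c+s}-\F{c}=|A\cap[c,c+s]|$, which turns this into the statement.

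The step I expect to need the most care is checking that the windows are correctly ordered and nested inside $[b-d,b-1]$. I would verify the following in order:
\begin{itemize}
\item $b-d\leq a-c$, which is equivalent to $c\leq d-s$;
\item $a-c\leq b-c-m$, which is equivalent to $m\leq s$;
\item $b-c-m\leq b-1$, which holds since $m\leq s<1\leq c$.
\end{itemize}
I would also check that the hypotheses of Lemma \ref{minterv} hold in both applications, namely that $I$ is discrepancy maximizing for $A\cap[0,x+1]$ and that the intervals $[\alpha,\beta]$ lie in $[0,x+1]$. If $b-m$ happened to be below $a$, the middle term would simply be $0$, but $m\leq s$ rules this out anyway.
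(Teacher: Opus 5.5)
Your proposal is correct and is essentially the paper's own proof: it uses the same three windows $[b-d,b-c-s]$, $[b-c-s,b-c-m_A^-(c)]$ and $[b-c-m_A^-(c),b-1]$, applies Lemma~\ref{minterv} to $[c+s,d]$ (using $m_A^-(d)=0$) and to $[1,c]$, and fills the middle window with the translate $(A\cap[a,b-m_A^-(c)])-c$. One small correction: the inequality $b-c-m_A^-(c)\leq b-1$ follows from $c\geq 1$ and $m_A^-(c)\geq 0$, not from $m_A^-(c)\leq s<1$ as you phrase it.
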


\begin{proof}
Since $I$ is discrepancy maximizing, we have $m_A^-(c)\leq t\leq s$. By Lemma \ref{minterv} applied to $[1,c]$ we have
$$
|(A - A)\cap [b - c - m_A^-(c), b - 1]|\geq 2\F{c}.
$$
Also, since $m_A^-(d)=0$, applying Lemma \ref{minterv} to $[c+s,d]$ gives
$$
|(A - A)\cap [b - d, b - c - s]|\geq 2(\F{d} - \F{c+s}).
$$
Finally, since $c\in A$ and $a=b-s$,
$$
(A-c)\cap [b-c-s,b-c-m_A^-(c)]
=
(A\cap [a,b-m_A^-(c)])-c
\subseteq A-A.
$$
The intervals
$$
[b-d,b-c-s],\qquad [b-c-s,b-c-m_A^-(c)],\qquad [b-c-m_A^-(c),b-1]
$$
are disjoint up to endpoints and are contained in $[b-d,b-1]$. 
Hence, summing the three estimates above,
$$
|(A - A)\cap [b - d, b - 1]|
\geq
2(\F{d}-\F{c+s})+|A\cap [a,b-m_A^-(c)]|+2\F{c}.
$$
Since
$$
2(\F{d}-\F{c+s})+2\F{c}
=
2\F{d}-2|A\cap[c,c+s]|,
$$
we get
$$
|(A - A)\cap [b - d, b - 1]|
\geq
2\F{d}+|A\cap [a,b-m_A^-(c)]|-2|A\cap[c,c+s]|.
$$
\end{proof}

In particular, we have the following corollary. 

\begin{cor} \label{another boost cor}
    If $m_A^-(d) = 0$ and $x+1\geq d\geq u + s$, then
    \begin{equation*}
        |(A - A)\cap [b - d, b - 1]|\geq 2\F{d} + |A\cap [a, b - m_A^-(u)]|. 
    \end{equation*}
\end{cor}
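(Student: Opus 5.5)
The plan is to apply Lemma \ref{another boost lem} with $c=u$. Then $|A\cap[u,u+s]|=0$ must follow from the gap estimate, and the remaining conditions have to be checked. By definition $u\in A$, and we noted after Lemma \ref{y - 1 gap lem} that $u\geq 1$ (since $y>2$ and $1\in A$ lies in $(-\infty,y-1]$). The hypotheses of the corollary give $m_A^-(d)=0$ and $d\leq x+1$, and the condition $c\leq d-s$ is exactly $d\geq u+s$. So all the hypotheses of Lemma \ref{another boost lem} hold, and we obtain
\[
|(A-A)\cap[b-d,b-1]|\geq 2\F{d}+|A\cap[a,b-m_A^-(u)]|-2|A\cap[u,u+s]|.
\]

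It remains to show that $|A\cap[u,u+s]|=0$. By Lemma \ref{y - 1 gap lem} we have $v-u\geq 1$. The definitions of $u$ and $v$ say that $A$ has no points in $(u,y-1)$, and none in $(y-1,v)$ either (with $y-1\notin A$ if $u<y-1<v$; in the degenerate case $u=y-1$ or $v=y-1$ the same reasoning applies). Hence $A\cap(u,v)=\emptyset$. Since $s<1\leq v-u$ (recall $s<1$ from Lemma \ref{s<1}), we have $[u,u+s]\subseteq[u,v)$. Therefore $A\cap[u,u+s]\subseteq\{u\}$, which has measure zero, and the correction term vanishes.

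The only point requiring care is the emptiness of $A$ strictly between $u$ and $v$, which follows directly from their definitions as the last point of $A$ at most $y-1$ and the first point at least $y-1$. Everything else is a direct substitution into Lemma \ref{another boost lem}, so I expect no real obstacle.
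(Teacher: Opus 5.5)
Your proposal is correct and follows the paper's proof essentially verbatim. You apply Lemma \ref{another boost lem} with $c=u$, checking the hypotheses via $u\in A$, $u\geq 1$ (from $y>2$) and $u\leq d-s$. You then remove the correction term $2|A\cap[u,u+s]|$ using $v-u\geq 1\geq s$ together with $A\cap(u,v)=\emptyset$.
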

\begin{proof}
Since $y>2$, we have $u\geq 1$. Moreover, $u\in A$, and the assumption $d\geq u+s$ gives $u\leq d-s$. Hence,   Lemma \ref{another boost lem} applies with $c=u$, and gives
$$
|(A - A)\cap [b - d, b - 1]|
\geq
2\F{d} + |A\cap [a, b - m_A^-(u)]| - 2|A\cap [u,u+s]|.
$$
Since $v-u\geq 1$ and $s\leq 1$, we have $u+s\leq v$. By the definition of $u$ and $v$, the interval $(u,v)$ is disjoint from $A$, and therefore $|A\cap [u,u+s]|=0$. This gives
$$
|(A - A)\cap [b - d, b - 1]|
\geq
2\F{d} + |A\cap [a, b - m_A^-(u)]|.
$$
\end{proof}
\subsection{Pushing the maximizing interval to the left of $x-1$}

In this subsection we prove the stronger estimate $y>2+s$. Along the way, we establish several auxiliary lemmas that hold under the additional hypothesis $y<3+s$, and these will also be used later in the proof.

The regime $y<3+s$ is equivalent to having $b>x-2$, so the discrepancy-maximizing interval $I=[a,b]$ already reaches into the interval immediately to the left of $x$. This will allow us to extract additional information about the position of $I$.
\begin{lem}
\label{small case} If $y<3+s$, then $a\geq v$. 
\end{lem}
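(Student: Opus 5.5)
We argue by contradiction. Suppose that $y<3+s$ but $a<v$. Since $a\in A$ (we chose $I$ with $a,b\in A$) and the open interval $(u,v)$ contains no point of $A$, the assumption $a<v$ forces $a\leq u$. Note that $u<y-1=x-a$. The strategy is to feed this position information into the difference-set machinery of the previous subsection and show that it is incompatible with the failure inequality $\F{x-1}+\F{x}+\F{x+1}>x$, which holds on a whole neighbourhood of $x$.

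\emph{Step 1: vanishing of $m_A^-$ at suitable points.} Since $a+y=x+1\in A$, Lemma \ref{mzero} gives $m_A^-(y)=0$. Similarly, $a+(y-2)=x-1\in A$, so $m_A^-(y-2)=0$. Together with Lemma \ref{gapy} and Lemma \ref{y - 1 gap lem}, this pins down where $A$ can carry mass near $y-1$.

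\emph{Step 2: lower bound for the difference set.} I would apply Corollary \ref{another boost cor} with $d=y$. Its hypotheses hold: $y\leq x+1$ because $a\geq 0$, and $y\geq u+s$ because $u<y-1$ and $s<1$. This gives
\[
|(A-A)\cap[b-y,b-1]|\geq 2\F{y}+|A\cap[a,b-m_A^-(u)]|.
\]
The interval $[b-y,b-1]$ may stick out to the left of $0$. In that case I use the central symmetry of $A-A$ to fold it into $[0,b-1]$. Since $A$ is sum free, this positive portion is disjoint from $A\cap[1,b-1]$, so it yields an upper bound for $\F{b-1}$ in terms of $b$, $\F{y}$ and the mass of $A\cap I$.

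\emph{Step 3: combine with the failure inequality.} The hypothesis $y<3+s$ means $b>x-2$. Hence $[b-1,x+1]$ has length less than $4$, and by the observation that any interval of length at most $2$ meets $A$ in measure at most $1$, it carries at most $2$ units of mass. Since $a\leq u$, the shift $u+(A\cap I)$ lands in $[u+a,u+b]$, close to $x$, and this shift is disjoint from $A$. This removes mass from near $[x-1,x+1]$, exactly as in the proof of Lemma \ref{y - 1 gap lem}. I would then bound $\F{x-1}$, $\F{x}$ and $\F{x+1}$ by $\F{b-1}$ plus the admissible increments, and substitute the estimate from Step 2. The aim is to obtain $\F{x-1}+\F{x}+\F{x+1}\leq x$, which contradicts the choice of $x$.

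The main obstacle is Step 3. All the inequalities involved are sharp in the period-3 example, so the bookkeeping must exactly absorb the terms $|A\cap[a,b-m_A^-(u)]|$ and $|A\cap I|=(s+t)/2$. If the crude bound of $2$ on the mass of $[b-1,x+1]$ turns out to be too lossy, I would instead split that interval at $x-1$ and use Lemma \ref{fx+1-fx} for the piece $[x,x+1]$.
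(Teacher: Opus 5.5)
Your proposal has a genuine gap. Step 3 is where the contradiction is supposed to come from, but it is only stated as an aim, and you yourself flag that the bookkeeping may not close. As written, nothing is proved.

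The mechanism you propose for Step 3 is also unsupported. You say the translate $u+(A\cap I)\subseteq[u+a,u+b]$ lands near $x$ ``exactly as in the proof of Lemma~\ref{y - 1 gap lem}''. In that proof, however, $u+a<x<u+b$ came from $u>y-s-1$, which was deduced from the assumption $v<u+1$. That assumption has since been refuted, and you have no replacement for it.

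Step 2 has a similar problem. Folding the negative part of $[b-y,b-1]$ into $[0,b-1]$ loses measure wherever the reflected piece overlaps the positive piece. So the bound you extract from Corollary~\ref{another boost cor} on $f(b-1)$ is not controlled either.

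More to the point, no measure estimates are needed: the configuration $a<v$ is impossible for purely positional reasons. You correctly obtain $a\le u$. Then also $b\le u$: otherwise $b\in A$ and $A\cap(u,v)=\varnothing$ give $b\ge v$, so $s=b-a\ge v-u\ge 1$, contradicting $s<1$. Hence $a\le u-s$. On the other hand, $x-1\in A$ and $x-1\ge y-1$ (because $a\ge\min A=1$), so $v\le x-1$ by the definition of $v$. Combining this with $y<3+s$ and $v-u\ge 1$ gives
\[
v\le x-1=a+y-2\le u-s+y-2<u+1\le v,
\]
a contradiction.

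This chain is what the paper's proof rests on; it phrases the last step as $x-1<v$ forcing $x-1<y-1$, i.e.\ $a<1$. The failure inequality $f(x-1)+f(x)+f(x+1)>x$ enters only through facts already established, namely $x-1\in A$, $s<1$ and $v-u\ge 1$.
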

\begin{proof}
Suppose, for a contradiction, that $a<v$. Since $a\in A$ and $v=\min(A\cap[y-1,\infty))$, we have $a<y-1$, and hence $a\leq u$ by definition of $u$. We claim that $b\leq u$. Suppose, for a contradiction, that $b>u$. Since $b\in A$ and $(u,v)\cap A=\varnothing$, we have $b\geq v$. Hence,   $[u,v]\subseteq I$. But $v-u\geq1$ and $s\leq1$, so this forces $I=[u,v]$, which is impossible since $(u,v)\cap A=\varnothing$ whereas $I$ has positive discrepancy. Therefore,   $b\leq u$. Note that by Corollary \ref{add balanced} we have the inclusion $[2a, 2b]\subseteq A + A$, moreover $[2a,2b]$ is disjoint from $A$ since the set is sum-free.
Therefore, either $2a > x + 1$ or $2b < x + 1$. Since $y - 1\geq u \geq b$, we have $2a \leq x + 1$. Therefore, $2b < x + 1$. Similarly, 
\begin{equation*}
    2b = 2a + 2s\geq a + 2s + 1 = x - y + 2s + 2\geq x - 1,
\end{equation*}
and since $x-1\in A$ while $[2a,2b]$ is disjoint from $A$, this implies $2a > x - 1$. Hence,   $[2a, 2b]\subseteq [x - 1, x + 1]$.
Note that
$$
    x-1=a+y-2\leq u-s+y-2<u+1\leq v.
$$
Choose $0<\epsilon'\leq\epsilon$ such that $x-1+\epsilon'<v$. Since
$x-1+\epsilon'\in A$ and $v=\min(A\cap[y-1,\infty))$, we must have
$$
    x-1+\epsilon'<y-1.
$$
Hence, $x+\epsilon'<y$, and therefore $a<1$, a contradiction.
\end{proof}

\begin{cor}
    If $y < 3 + s$, then
    $$
        3\F{b - 1}\leq b - 2 - \frac{s + t}{2} + m_A^-(u).
    $$
\end{cor}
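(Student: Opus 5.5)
The plan is to apply Corollary \ref{another boost cor} with $d=b-1$, and then use sum-freeness exactly as in the proof that $y>1+s$.

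First I check the hypotheses of Corollary \ref{another boost cor}.
\begin{enumerate}[(i)]
\item $m_A^-(b-1)=0$. This follows from Lemma \ref{mzero} with $z=b-1$, since $b-z=1\in A$. Also $0\leq b-1\leq x+1$, because $b\geq a\geq 1$ and $b\leq x+1$.
\item $d\leq x+1$. This is immediate, since $b-1<b\leq x+1$.
\item $d\geq u+s$, i.e.\ $b-1\geq u+s$, i.e.\ $a\geq u+1$. This is where the hypothesis $y<3+s$ enters. By Lemma \ref{small case} we have $a\geq v$, and by Lemma \ref{y - 1 gap lem} we have $v\geq u+1$.
\end{enumerate}

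Corollary \ref{another boost cor} then gives
\[
|(A-A)\cap[1,b-1]|\geq 2\F{b-1}+|A\cap[a,b-m_A^-(u)]|,
\]
since $[b-d,b-1]=[1,b-1]$.

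Next I use sum-freeness. The set $(A-A)\cap[1,b-1]$ is disjoint from $A\cap[1,b-1]$, and both lie in $[1,b-1]$, an interval of length $b-2$. Hence
\[
3\F{b-1}+|A\cap[a,b-m_A^-(u)]|\leq b-2.
\]

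It remains to bound $|A\cap[a,b-m_A^-(u)]|$ from below. As noted in the proof of Lemma \ref{another boost lem}, $0\leq m_A^-(u)\leq t\leq s$, so $b-m_A^-(u)\in[a,b]$. The removed piece $[b-m_A^-(u),b]$ has length $m_A^-(u)$, so
\[
|A\cap[a,b-m_A^-(u)]|\geq |A\cap I|-m_A^-(u)=\frac{s+t}{2}-m_A^-(u).
\]
Substituting this into the previous inequality gives the claimed bound.

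The only non-routine step is (iii), verifying $b-1\geq u+s$, and it reduces entirely to Lemma \ref{small case} and the gap estimate $v-u\geq 1$. Everything else is bookkeeping.
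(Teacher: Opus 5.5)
Your proposal is correct and follows essentially the same route as the paper's proof. Both use Lemma~\ref{small case} together with the gap $v-u\geq 1$ to get $b-1\geq u+s$, apply Corollary~\ref{another boost cor} with $d=b-1$, add the disjoint mass of $A\cap[1,b-1]$ using sum-freeness, and finish with the trivial bound $|A\cap[a,b-m_A^-(u)]|\geq \frac{s+t}{2}-m_A^-(u)$. The one addition is that you justify $m_A^-(b-1)=0$ explicitly via Lemma~\ref{mzero}, which the paper uses without restating it.
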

\begin{proof}
     By Lemma \ref{small case}, we have $a\geq v$. Since $v-u\geq 1$, this gives
    $$
        b-1=a+s-1\geq v+s-1\geq u+s.
    $$
    Hence, Corollary \ref{another boost cor}, applied with $d=b-1$, gives
    $$
        |(A-A)\cap[1,b-1]|
        \geq
        2\F{b-1}+|A\cap[a,b-m_A^-(u)]|,
    $$
    where we used $m_A^-(b-1)=0$. Since $A$ is sum free, we can add the mass of $A\cap[1,b-1]$ inside $[1,b-1]$, and therefore
    $$
        3\F{b-1}+|A\cap[a,b-m_A^-(u)]|\leq b-2.
    $$
    Finally, since $m_A^-(u)\leq t\leq s$, we have $b-m_A^-(u)\in[a,b]$. Hence,  
$$
    |A\cap[a,b]|
    \leq
    |A\cap[a,b-m_A^-(u)]|+m_A^-(u),
$$
because passing from $[a,b]$ to $[a,b-m_A^-(u)]$ removes only an interval of length $m_A^-(u)$. Therefore
$$
    |A\cap[a,b-m_A^-(u)]|
    \geq
    |A\cap[a,b]|-m_A^-(u)
    =
    \frac{s+t}{2}-m_A^-(u).
$$
Substituting this into
$$
    3\F{b-1}+|A\cap[a,b-m_A^-(u)]|\leq b-2
$$
gives
$$
    3\F{b-1}
    \leq
    b-2-\frac{s+t}{2}+m_A^-(u).
$$
\end{proof}

The next lemma is proved by contradiction. Under the hypothesis $y\leq 2+s$, the previous corollary gives an improved bound for $\F{b-1}$, while the inequality $b+1\geq x$ allows one to control the contribution of $A$ in $[b+1,x+1]$ by means of sum-freeness and the Brunn--Minkowski inequality. Together, these estimates force
$$
\F{x-1}+\F{x}+\F{x+1}\leq x,
$$
contradicting the choice of $x$.
\begin{lem}
    We have $y > 2 + s$. Equivalently, $b < x - 1$. 
\end{lem}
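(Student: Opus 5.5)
We argue by contradiction. Suppose $y\leq 2+s$, or equivalently $b\geq x-1$. We already know that $b<x$, so $b\in[x-1,x)$. In particular $y<3+s$. This means the corollary following Lemma \ref{small case} applies and gives
\[
3\F{b-1}\leq b-2-\frac{s+t}{2}+m_A^-(u)\leq b-2-\frac{s+t}{2}+t,
\]
where we used $m_A^-(u)\leq t$, which holds because $I$ is discrepancy maximizing. Since $b-1\leq x-1$, we split
\[
\F{x-1}+\F{x}+\F{x+1}=3\F{b-1}+M_1+M_2+M_3,
\]
where $M_1,M_2,M_3$ are the masses of $A$ in $[b-1,x-1]$, $[b-1,x]$ and $[b-1,x+1]$. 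The aim is to show that $M_1+M_2+M_3\leq x-b+2+\frac{s+t}{2}-t$. Combined with the bound above, this would give $\F{x-1}+\F{x}+\F{x+1}\leq x$, contradicting the choice of $x$.

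To bound the $M_j$, we use that $a=b-s>b-1$, so $I\subseteq[b-1,x+1]$. We split $[b-1,x+1]$ into three pieces: $[b-1,a]$, then $I$, which carries mass exactly $(s+t)/2$, then $[b,x+1]$.
\begin{itemize}
\item The piece $[b-1,a]$ has length $1-s$.
\item The piece $[b,x+1]$ has length $x+1-b\leq 2$, so sum-freeness with $1\in A$ bounds its mass by $1$. More precisely, $A\cap[b,b+1]$ and $(A\cap[b+1,x+1])-1$ are disjoint, and Lemma \ref{discrepancy observation} (in its strict form for closed sets) gives $2|A\cap J|-|J|<1$ for every interval $J$.
\item Near $b$ itself we use Brunn--Minkowski in the form of Corollary \ref{add balanced}. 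The set $(A\cap I)+1$ lies in $[a+1,b+1]$ and is disjoint from $A$. Hence on $[a+1,b+1]$ the set $A$ has mass at most $(s-t)/2$, and by Lemma \ref{mzero} and Lemma \ref{minterv} the points just after $b$ are controlled in the same way.
\end{itemize}

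The three masses $M_1,M_2,M_3$ share the initial segment $[b-1,a]\cup I$, so that segment is counted three times. This is costly unless we exploit the gap structure: $A\cap[y-1,y-s]=\varnothing$, $v-u\geq1$ and $a\geq v$. I would use these to show that $A\cap[b-1,a]$ is essentially empty. The reason is that $b-1\geq v+s-1\geq u+s$, and the interval $[u,v]$ is a gap of length at least $1$ that sits relative to $x$ at the position forced by $y$.

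The two points $x\pm1$ lie in $A$, so the mass contributed near $x$ and $x+1$ must be paid for by the region $[b,x-1]$. Translating $A\cap I$ by $x-1$ and by $x+1$ (both in $A$), sum-freeness forces $A$ to miss translates of $A\cap I$ of measure $(s+t)/2$, and these land in the relevant windows. I expect this to supply exactly the extra saving of $t-\frac{s+t}{2}$ needed.

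The main obstacle is this final bookkeeping. The inequality is sharp in the period-$3$ model, so every estimate has to be used without loss. In particular, the $-\frac{s+t}{2}+t$ term from the corollary must be matched precisely by a lower bound on the missing mass near $x$ and $x+1$. I would first try to get this matching by combining Lemma \ref{fx+1-fx}, which says $\F{x+1}-\F{x}>1-t$, with the one-dimensional Brunn--Minkowski bound applied to $A\cap[b,x+1]$ and $A\cap I$ inside $[b-1,x+1]$. Only if that fails would I fall back on a case split according to whether $2b$ exceeds $x+1$.
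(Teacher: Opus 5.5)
Your first step agrees with the paper. Assuming $y\le 2+s$, the corollary following Lemma~\ref{small case} gives $3\F{b-1}\le b-2-\frac{s+t}{2}+m_A^-(u)$. You then weaken $m_A^-(u)$ to $t$, and this discards exactly the term the paper's proof depends on.

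The paper keeps $m_A^-(u)$ and bounds it by $m_A^-(u)\le\F{u}\le\F{y-1}$. It then cancels it against a $-\F{y-1}$ obtained from sum-freeness plus Brunn--Minkowski applied to $(A\cap[0,y-1])+(A\cap[b,a+1])\subseteq[b+1,x+1]$. This gives $\F{x+1}-\F{b+1}+\F{y-1}+(\F{a+1}-\F{b})\le x-b$. The paper combines this with three further facts:
\begin{itemize}
\item maximality of $d_A(I)$ on $[b-1,x-1]$, $[b-1,b]$ and $[a,a+1]$;
\item the bound $|A\cap[a,b]|+|A\cap[a+1,b+1]|\le s$, coming from $1\in A$;
\item $\F{x}\le\F{b+1}$, since $b+1\ge x$.
\end{itemize}
Together these yield $\F{x-1}+\F{x}+\F{x+1}\le x+\frac{y-s-2}{2}\le x$. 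With $t$ in place of $m_A^-(u)$, the same chain only gives $x+\frac{y-s-2}{2}+t-\F{y-1}$. Since $\F{y-1}\le y-2$ is tiny when $y$ is close to $2$, this does not close, and nothing in your sketch recovers the lost $t-\F{y-1}$.

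More fundamentally, the decisive bookkeeping is left as an expectation, and the mechanisms you propose for it do not work.
\begin{itemize}
\item The gap $(u,v)$ starts at $u\le y-1\le 1+s$, near the beginning of $A$, while $[b-1,a]\subseteq[x-2,x-1]$. Also $v$ can lie anywhere in $[u+1,a]$. So $v-u\ge 1$ and $a\ge v$ give no reason for $A\cap[b-1,a]$ to be ``essentially empty''. The only control available there is $\F{a}-\F{b-1}\le\frac{1-s}{2}$, from maximality on $[b-1,b]$.
\item The reflected translates $(x-1)-(A\cap I)$ and $(x+1)-(A\cap I)$ lie in $[y-2-s,y-2]$ and $[y-s,y]$. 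The first lies entirely below $\min A=1$, so it gives nothing.
\item The forward translates $(x\pm1)+(A\cap I)$ lie to the right of $x+1$. So none of these translates lands in $[b-1,x+1]$.
\item The Brunn--Minkowski pairing you suggest, $A\cap[b,x+1]$ with $A\cap I$, has difference set in $[0,y]$ and sumset in $[a+b,x+1+b]$, again to the right of $x+1$.
\end{itemize}
The missing idea is to pair the initial segment $A\cap[0,y-1]$ with $A\cap[b,a+1]$. The resulting forbidden set lands in $[b+1,x+1]$ and produces the $\F{y-1}$ that absorbs $m_A^-(u)$.
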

\begin{proof}
    For the sake of contradiction, let $y\leq 2 + s$. By the preceding corollary, 
    \begin{equation*}
        3\F{b - 1}\leq b - 2 - \frac{s + t}{2} + m_A^-(u). 
    \end{equation*}
Also, by the maximality of the discrepancy of $I$, applied to the interval $[b-1,x-1]$, we have 
$$ 
\F{x-1}-\F{b-1}\leq \frac{x-b+t}{2}. 
$$ 
Adding the last two inequalities gives 
$$
\F{x-1}+2\F{b-1} \leq x-\frac{y+3}{2}+m_A^-(u), 
$$ 
where we used $b=x+1-y+s$.

We next estimate the mass of $A$ in $[b-1,b+1]$. First, 
$$ 
\F{b+1}-\F{b-1} \leq s+(\F{a}-\F{b-1})+(\F{a+1}-\F{b}). 
$$ 
Indeed, the two intervals $[a,b]$ and $[a+1,b+1]$ have length $s$, and since $A$ is sum free and $1\in A$, the contribution of $A$ in these two intervals is at most $s$. The remaining parts of $[b-1,b+1]$ are $[b-1,a]$ and $[b,a+1]$, which give the two other terms. By the maximality of the discrepancy of $I$ applied to $[b-1,b]$, we have
$$
2(\F{b}-\F{b-1})-1\leq t.
$$
Subtracting $2(\F{b}-\F{a})-s=t$, we get
$$
\F{a}-\F{b-1}\leq \frac{1-s}{2}.
$$
Hence, 
$$ 
\F{b+1}-\F{b-1} \leq \frac{1+s}{2}+\F{a+1}-\F{b}. 
$$ 
Combining this with the previous estimate gives 
$$ 
\F{x-1}+2\F{b+1} \leq x-\frac{y+1}{2}+m_A^-(u)+s+2(\F{a+1}-\F{b}). $$
We now use the sum-free condition together with Brunn--Minkowski. Since
$$
    (A\cap[0,y-1])+(A\cap[b,a+1])
    \subseteq [b+1,x+1],
$$
and this sumset is disjoint from $A$, we have
$$
    |A\cap[b+1,x+1]|
    +
    |(A\cap[0,y-1])+(A\cap[b,a+1])|
    \leq x-b.
$$
By the Brunn--Minkowski inequality, 
$$
    |(A\cap[0,y-1])+(A\cap[b,a+1])|
    \geq
    |A\cap[0,y-1]|+|A\cap[b,a+1]|.
$$
Therefore
$$
    \F{x+1}-\F{b+1}+\F{y-1}+(\F{a+1}-\F{b})\leq x-b.
$$
Since $y\leq 2+s$, we have
$$
    b+1=x+2-y+s\geq x,
$$
and hence $\F{x}\leq \F{b+1}$. Combining the last estimate with the bound
$$
    \F{x-1}+2\F{b+1}
    \leq
    x-\frac{y+1}{2}+m_A^-(u)+s+2(\F{a+1}-\F{b}),
$$
we get
$$
\begin{aligned}
    \F{x-1}+\F{x}+\F{x+1}
    &\leq \F{x-1}+\F{b+1}+\F{x+1} \\
    &\leq \F{x-1}+2\F{b+1}+x-b-\F{y-1}-(\F{a+1}-\F{b}) \\
    &\leq 2x-b-\frac{y+1}{2}+s+m_A^-(u)-\F{y-1}+\F{a+1}-\F{b}.
\end{aligned}
$$
Since $b=x+1-y+s$, this gives
$$
    \F{x-1}+\F{x}+\F{x+1}
    \leq
    x+\frac{y-3}{2}+m_A^-(u)-\F{y-1}+\F{a+1}-\F{b}.
$$
Now $m_A^-(u)\leq \F{u}\leq \F{y-1}$, so
$$
    \F{x-1}+\F{x}+\F{x+1}
    \leq
    x+\frac{y-3}{2}+\F{a+1}-\F{b}.
$$
Finally, by the maximality of the discrepancy of $I$, applied to $[a,a+1]$,
$$
    \F{a+1}-\F{b}\leq \frac{1-s}{2}.
$$
Therefore
$$
    \F{x-1}+\F{x}+\F{x+1}
    \leq
    x+\frac{y-3}{2}+\frac{1-s}{2}
    =
    x+\frac{y-s-2}{2}
    \leq x,
$$
because $y\leq 2+s$. This contradicts the choice of $x$.
\end{proof}

\subsection{Excluding the range $y\leq 3$}

We now prove that in fact $y>3$. We argue by contradiction, assuming that $y\leq 3$, and split into two cases depending on whether $A\cap [b+1,x]$ is empty or not. In both cases, the information already obtained about the gap around $y-1$ and the position of the discrepancy-maximizing interval $I=[a,b]$ yields a contradiction.
\begin{lem} \label{y > 3 lem}
   We have $y > 3$. 
\end{lem}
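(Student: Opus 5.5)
We argue by contradiction and assume $y\leq 3$, so that (together with $y>2+s$) we have $2+s<y\leq 3$. In terms of positions, $a=x+1-y\geq x-2$ and $b=a+s<x-1$. Since $y<3+s$, Lemma \ref{small case} gives $a\geq v$, and hence $[a,b]$ lies to the right of the gap $(u,v)$ around $y-1$. We also have $b+1<x$. Following the announced split, we distinguish two cases according to whether $A\cap[b+1,x]$ is empty.

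\emph{Case 1: $A\cap[b+1,x]=\varnothing$.} Then $\F{x}=\F{b+1}$. We reuse the chain from the proof of $y>2+s$, this time with $b+1<x$. The corollary after Lemma \ref{small case} still applies, because $y<3+s$, and gives
\[3\F{b-1}\leq b-2-\tfrac{s+t}{2}+m_A^-(u).\]
Next, discrepancy maximality of $I$ on $[b-1,x-1]$ bounds $\F{x-1}-\F{b-1}$. The estimate $\F{b+1}-\F{b-1}\leq\frac{1+s}{2}+\F{a+1}-\F{b}$ carries over unchanged. The Brunn--Minkowski step uses the sumset $(A\cap[0,y-1])+(A\cap[b,a+1])\subseteq[b+1,x+1]$, which is disjoint from $A$. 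Now $\F{x}=\F{b+1}$ replaces the inequality $\F{x}\leq\F{b+1}$ used before. Repeating the algebra gives
\[\F{x-1}+\F{x}+\F{x+1}\leq x+\tfrac{y-s-2}{2}.\]
This is not yet enough, since now $y>2+s$. We must recover the extra $\frac{y-s-2}{2}\leq\frac{1-s}{2}$. I would do this by exploiting the empty interval $[b+1,x]$ more strongly. Its length is $y-2-s$, and mass there enters both the $[b+1,x+1]$ count and the $\F{x}$ term. Counting this empty length once more in the $[b+1,x+1]$ bound should absorb the deficit.

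\emph{Case 2: there is $r\in A\cap[b+1,x]$.} Then $r-1\in[b,x-1]$, and sum-freeness with $1\in A$ gives $r-1\notin A$. More usefully, $r+(A\cap I)$ and $r-(A\cap I)$ are disjoint from $A$. Since $r-a\in[b+1-a,x-a]=[1+s,y-1]$, the translate $r-(A\cap I)$ lands in $[r-b,r-a]\subseteq[1,y-1]$. The aim is to show that this translate hits the region forced to be near the gap $(u,v)$ and near $1$ and $2$, or else that $r+(A\cap I)$ falls inside $[x-1,x+1]$ near the points $x\pm1\in A$. For this I would use the $\epsilon$-neighbourhoods $[x\pm1-\epsilon,x\pm1+\epsilon]\subseteq A$ from Lemma \ref{positive-triple-reduction}. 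Combined with $v-u\geq1$, $a\geq v$ and $y\leq 3$ (which puts $y-1\leq2$, so that $u\geq1$ lies in $[1,2)$ and the gap sits just after $1$), the goal is a conflict of the type in Lemma \ref{small case}. That is, some point of $A$ equals a sum of two points of $A$, or the gap forces $a<1$.

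The main obstacle is Case 2. Pinning down exactly which translate of $A\cap I$ collides with $A$ will probably require a further subcase split on whether $r+s\leq x$. Alternatively, one can use Lemma \ref{mzero} to obtain $m_A^-(r-a)=0$ or $m_A^-(b-r)=0$, and then feed this into Lemma \ref{minterv} to get a larger difference-set bound, contradicting the choice of $x$ as in Case 1.
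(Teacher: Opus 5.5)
Your setup is correct ($2+s<y\le 3$, $a\ge v$ by Lemma~\ref{small case}, $b+1<x$). However, neither case contains the step that actually produces the contradiction.

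\textbf{The case $A\cap[b+1,x]=\varnothing$.} You correctly reach $f(x-1)+f(x)+f(x+1)\le x+\frac{y-s-2}{2}$, but the proposed repair is not valid. The sumset $S=(A\cap[0,y-1])+(A\cap[b,a+1])$ begins at $b+1$ and may lie largely inside the empty interval $[b+1,x]$. So $S$ and $[b+1,x]$ are not disjoint forbidden regions, and you cannot add their lengths. Done honestly, you only get $|A\cap[x,x+1]|+|S\cap[x,x+1]|\le 1$ with $|S\cap[x,x+1]|\ge|S|-(y-s-2)$, which is the old bound again.

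The missing idea is to choose the summands so that the sumset lands in $[x,x+1]$. Since $y\le 3$ gives $a+1\ge x-1$, the set $(A\cap[0,y-1])+(A\cap[x-1,a+1])$ lies in $[x,x+1]$ and misses $A$. Brunn--Minkowski then gives $(f(x+1)-f(x))+f(y-1)+(f(a+1)-f(x-1))\le 1$.
\begin{itemize}
\item Compared with the summand $A\cap[b,a+1]$, the lower bound loses $f(x-1)-f(b)$.
\item The right-hand side drops from $x-b$ to $1$, a gain of $y-s-2$.
\item Maximality of $I$ tested against $[a,x-1]$ gives $f(x-1)-f(b)\le\frac{x-1-b}{2}=\frac{y-s-2}{2}$.
\end{itemize}
So the net gain is at least $\frac{y-s-2}{2}$, exactly your deficit. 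Combining with your chain, $f(x)=f(b+1)$, $m_A^-(u)\le f(y-1)$ and $f(a+1)-f(b)\le\frac{1-s}{2}$, the total is at most $x$.

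\textbf{The case $A\cap[b+1,x]\ne\varnothing$.} You have the right first observation: $r-(A\cap I)\subseteq[1,y-1]$ is disjoint from $A$. But you then aim for a pointwise collision of the type in Lemma~\ref{small case} without giving any mechanism, and nothing in the hypotheses evidently forces one. The contradiction here is quantitative. Use the observation as a measure bound, $f(y-1)+\frac{s+t}{2}\le y-2$, and combine it with the following.
\begin{itemize}
\item Reflection through $x+1\in A$, together with Lemma~\ref{gapy}, gives $f(y)-f(y-1)\le\frac{s-t}{2}$.
\item Theorem~\ref{offdiagonal main2} applied to $A\cap[y,x+1]$ gives $2(f(x+1)-f(y))+f(a)\le a+t$. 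This uses that its discrepancy is at most $t$ and that its difference set lies in $[-a,a]$, because $x+1-y=a$.
\item Maximality of $I$ on $[a,x-1]$ gives $f(x-1)-f(a)\le\frac{x-1-a+t}{2}$.
\item Lemma~\ref{fx+1-fx} gives $f(x)<f(x+1)-1+t$.
\end{itemize}
Summing yields $f(x-1)+f(x)+f(x+1)<x-5+\frac{3y+t}{2}$. Since $t\le s<y-2$ and $y\le 3$, this is less than $x$. The $\epsilon$-neighbourhoods and Lemma~\ref{mzero} that you invoke are not needed.
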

\begin{proof}
    Assume, for a contradiction, that $y\leq 3$. We already know that $y>2+s$, and hence $b<x-1$.

We first consider the case where $A$ intersects $[b+1,x]$. Choose $z\in A\cap[b+1,x]$. Then
$$
z-(A\cap I)\subseteq [z-b,z-a]\subseteq [1,y-1].
$$
Since $A$ is sum free, this set is disjoint from $A$.
Therefore
$$
\F{u}=\F{y-1}\leq y-2-\frac{s+t}{2}.
$$
We shall also use that
$$
\F{y}-\F{u}\leq \frac{s-t}{2}.
$$
Indeed, since $A\cap[y-1,y-s]=\varnothing$, we have
$\F{y}-\F{u}=|A\cap[y-s,y]|$. If $r\in A\cap[y-s,y]$, then $x+1-r\in I$; and since $x+1\in A$, the point $x+1-r$ cannot belong to $A$. Hence,   reflection through $x+1$ sends $A\cap[y-s,y]$ into $I\setminus A$, and so
$$
\F{y}-\F{u}\leq |I\setminus A|=\frac{s-t}{2}.
$$
By Theorem \ref{offdiagonal main2}, applied to $A\cap[y,x+1]$, and using
$d(A\cap[y,x+1])\leq t$, we have
$$
    |(A-A)\cap[0,a]|\geq 2(\F{x+1}-\F{y})-t.
$$
Since $A$ is sum free, $(A-A)\cap[0,a]$ is disjoint from $A\cap[0,a]$ up to the endpoint $0$, which has measure zero. Therefore
$$
    2(\F{x+1}-\F{y})-t+\F{a}\leq a,
$$
and hence
$$
    2(\F{x+1}-\F{y})+\F{a}\leq a+t.
$$
Also, by the maximality of the discrepancy of $I$, applied to $[a,x-1]$, we have
$$
\F{x-1}-\F{a}\leq \frac{x-1-a+t}{2}.
$$
Hence,  
$$
2\F{x+1}+\F{x-1}\leq a+t+2\F{y}+\frac{x-1-a+t}{2}
$$
which, by the previous inequalities and the definition of $y$ implies
$$
2\F{x+1}+\F{x-1}\leq x+s+2\F{u}-\frac{y-t}{2}.
$$
Using $\F{u}\leq y-2-(s+t)/2$, we obtain
$$
2\F{x+1}+\F{x-1}
\leq x-4+\frac{3y-t}{2}.
$$
Since $\F{x+1}-\F{x}>1-t$, we have
$$
\F{x}<\F{x+1}-1+t.
$$
Therefore
$$
\F{x-1}+\F{x}+\F{x+1}
<x-5+\frac{3y+t}{2}.
$$
But $t\leq s<y-2$ and $y\leq3$, so $3y+t<4y-2\leq10$. Thus
$$
\F{x-1}+\F{x}+\F{x+1}<x,
$$
contradicting the choice of $x$.

We may therefore assume that $A\cap[b+1,x]=\varnothing$. In particular, $\F{x}=\F{b+1}$. Repeating the argument used in the proof of the preceding lemma up to that point, and using only the fact that $y<3+s$, we get
$$
\F{x-1}+2\F{b+1}
\leq
x-\frac{y+1}{2}+m_A^-(u)+s+2(\F{a+1}-\F{b}).
$$
Since
$$
(A\cap[0,y-1])+(A\cap[x-1,a+1])\subseteq [x,x+1],
$$
and this sumset is disjoint from $A$, we have
$$
|A\cap[x,x+1]|+
|(A\cap[0,y-1])+(A\cap[x-1,a+1])|
\leq 1.
$$
By Brunn--Minkowski,
$$
|(A\cap[0,y-1])+(A\cap[x-1,a+1])|
\geq \F{y-1}+\F{a+1}-\F{x-1}.
$$
Thus
$$
(\F{x+1}-\F{x})+\F{y-1}+(\F{a+1}-\F{x-1})\leq 1.
$$
Combining this with the previous estimate and using $\F{x}=\F{b+1}$ gives
$$
\F{x-1}+\F{x}+\F{x+1}
\leq
x-\frac{y+1}{2}+s+(\F{a+1}-\F{b})+(\F{x-1}-\F{b})+1,
$$
where we used $m_A^-(u)\leq \F{u}=\F{y-1}$.
It remains to bound the two remaining terms. Since $I=[a,b]$ is discrepancy maximizing and
$|A\cap I|=(s+t)/2$, comparison with the interval $[a,a+1]$ gives
$$
2(\F{a+1}-\F{a})-1\leq t.
$$
Subtracting $2(\F{b}-\F{a})-s=t$, we get
$$
\F{a+1}-\F{b}\leq \frac{1-s}{2}.
$$
Similarly, comparison with $[a,x-1]$ gives
$$
2(\F{x-1}-\F{a})-(x-1-a)\leq t.
$$
Subtracting again $2(\F{b}-\F{a})-s=t$, we get
$$
\F{x-1}-\F{b}\leq \frac{x-1-b}{2}.
$$
Therefore
$$
\F{x-1}+\F{x}+\F{x+1}
\leq
x-\frac{y+1}{2}+s+\frac{1-s}{2}+\frac{x-1-b}{2}+1
=
x,
$$
because $x-1-b=y-s-2$. This gives a contradiction.
\end{proof}
\subsection{A gap around $x$ and the inequality $t>1/2$}
We begin by proving that the left end-point $a$ of the discrepancy-maximizing interval cannot be too close to $1$. This will lead to a gap of length at least $1$ around $x$, and hence to the inequality $t>1/2$.
\begin{lem}
   $a>2$.
\end{lem}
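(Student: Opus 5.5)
We argue by contradiction: assume $a\le 2$, so that, since $a\in A$ and $\min A=1$, we have $a\in[1,2]$ and $I=[a,b]\subseteq[1,3)$, because $s<1$. The idea is that a discrepancy-maximizing interval sitting at the very start of $A$ makes the difference set $(A\cap[1,x+1])-(A\cap I)$ very large. This difference set lies almost entirely inside $[0,x+1]$ and must avoid $A$, so it should contradict $\F{x-1}+\F{x}+\F{x+1}>x$. We also record the geometric consequences of $a\le 2$ that are already available: $y=x+1-a\in[x-1,x]$, so the gap $(u,v)$ of length at least $1$ from Lemma \ref{y - 1 gap lem} contains $[y-s,y-1]\subseteq[x-2-s,x-1]$. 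Since $x-1\in A$, we get $v\le x-1$, so $A$ is empty on an interval of length at least $1$ just to the left of $x-1$.

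The main estimate comes from the difference-set part of Lemma \ref{minterv}, applied with $[\alpha,\beta]=[1,x+1]$ and then reflected. Since $m_A^-(x+1)\le t$ and $A-A$ is symmetric, it gives
\[
|(A-A)\cap[1-b,\,x+1+t-b]|\ge 2\F{x+1}.
\]
The negative part of this window has length $b-1<2$, and only a bounded amount of the difference set can lie there. So the positive part $(A-A)\cap[0,x+1+t-b]$ has measure at least $2\F{x+1}-(b-1)$, or better if we control it. This set is disjoint from $A$ by sum-freeness, which yields an inequality of the form
\[
2\F{x+1}+\F{x+1+t-b}\le x+t+(\text{correction}).
\]
If the negative part is too lossy, I will instead use the version with $[\alpha,\beta]=[b,x+1]$ or $[v,x+1]$, so that the window lies entirely in $[0,\infty)$, and account for $A\cap[1,b]$ separately using $|A\cap[1,b]|\le (b-1+t)/2$, from discrepancy maximality.

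Next I convert $\F{x+1+t-b}$ into $\F{x-1}+\F{x}-\F{x+1}$ type terms. Because $b<3$, the point $x+1+t-b$ lies beyond $x-2+t$. The empty gap $(u,v)$ with $v\le x-1$ shows that $\F{x+1+t-b}$ is essentially $\F{x-1}$ minus a mass of at most $\max(0,2-(v-u))$. It then remains to bound $\F{x}-\F{x-1}$ and $\F{x+1}-\F{x}$. For this I will use $|A\cap J|\le 1$ for intervals $J$ of length $2$, Lemma \ref{fx+1-fx}, and the reflection trick used in Lemma \ref{y > 3 lem}: $(x+1)-(A\cap I)$ and $(x-1)-(A\cap I)$ avoid $A$, which bounds the mass of $A$ near $y$ and $y-2$ by $(s-t)/2$. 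Adding these should give $\F{x-1}+\F{x}+\F{x+1}\le x$, the desired contradiction.

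The main obstacle is the bookkeeping near the ends of the window. The loss of up to $b-1+t$ from the negative part and the extra $t$ from $m_A^-(x+1)$ have to be cancelled exactly by the gap $v-u\ge1$ and by $|A\cap I|=(s+t)/2$, since the inequality is tight in the period-3 example. If the bounds do not close, I would first split into the cases $b\le 2$ and $b>2$. In the first case $A\cap I$ and $(A\cap I)+1$ give disjointness information inside $[2,3]$; in the second case $b-1\in A\setminus I$ is forbidden, because $1\in A$. This mirrors the case split used in the proof of Lemma \ref{y > 3 lem}.
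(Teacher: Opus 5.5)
There is a genuine gap. The estimate at the heart of your plan is too weak, and the step that is supposed to recover the loss is never supplied.

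Since $x+1+t-b=y-s+t\le x$, your inequality $2f(x+1)+f(x+1+t-b)\le x+t$ is implied by the bound $2f(x+1)+f(x)\le x+t$. Theorem~\ref{offdiagonal main2} gives that bound directly for $A\cap[1,x+1]$, as in the proof of Lemma~\ref{fx+1-fx}. The shorter positive window is paid for exactly by the negative part $[1-b,0]$. So the hypothesis $a\le 2$ has effectively not been used, and you pay the full $m_A^-(x+1)$, which you can only bound by $t$. Your fallback windows $[b,x+1]$ and $[v,x+1]$ have the same right endpoint $\beta=x+1$, hence the same loss.

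Now compare $f(y-s+t)$ with $f(x-1)$ by reflection through $x+1$. Both points lie in $[y-s,y]$, whose $A$-mass is at most $\frac{s-t}{2}$. Together with Lemma~\ref{fx+1-fx}, the best you get is
\[
f(x-1)+f(x)+f(x+1)<x+2t-1+\tfrac{s-t}{2}.
\]
This contradicts the hypothesis only when $3t+s\le 2$, in particular only for $t\le\frac12$. Nothing available at this stage excludes larger $t$. The other ingredients you list (mass at most $1$ on intervals of length $2$, reflections through $x\pm1$, the empty gap $(u,v)$) are never shown to make up the deficit $2t-1+\frac{s-t}{2}$. ``Adding these should give'' is exactly where the proof has to happen.

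Two smaller corrections. The empty interval is $[y-1,y-s]$, not $[y-s,y-1]$. Also, $a\le 2$ together with $x-1\in A\cap[y-1,y]$ already forces $a>2-s$, so $2\in(a,b)$ and your case $b\le 2$ cannot occur.

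The missing idea is an estimate localized at $I$ that incurs no discrepancy loss. The paper gets it as follows.
\begin{enumerate}
\item Use the gap. Since $a\le 2<y-1$ and $a\in A$, we have $a\le u$. Because $b\in A$ and $v-u\ge 1>s$, the interval $I$ cannot reach past $(u,v)$. Hence $b\le u\le y-1$, and in particular $y>a$.
\item Lemma~\ref{mzero} gives $m_A^-(y)=0$, because $a+y=x+1\in A$.
\item The sumset form of Lemma~\ref{minterv} on $[\alpha,\beta]=[a,y]$ then gives, with no loss,
\[
2(f(y)-f(a))+f(x+1)-f(2a)\le y-a .
\]
\item Reflecting $A\cap[1,a]$ through $x+1$ into $[y,x]$ gives $f(x)-f(y)+f(a)\le a-1$.
\item Add these two inequalities and use three further facts: $f(x-1)\le f(y)$, since $y\ge x-1$; $f(2a)-f(a)\le 1$; and $f(a)\le\frac{a-1}{2}$, by maximality of $I$ against $[1,b]$. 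This yields $f(x-1)+f(x)+f(x+1)\le x$, the desired contradiction.
\end{enumerate}
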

\begin{proof}
    Assume that $a\leq 2$. We first observe that either $a\geq v$ or $b\leq u$. Indeed, suppose that $a<v$. Since $a\in A$, the definition of $v$ gives $a<y-1$, and hence $a\leq u$. If $b>u$, then $b\in A$ and $(u,v)\cap A=\varnothing$ force $b\geq v$, so $[u,v]\subseteq I$. But $v-u\geq1$ and $|I|=s\leq1$, so this forces $I=[u,v]$. Since $(u,v)\cap A=\varnothing$, this gives $\disc_A(I)\leq0$, contradicting $t=\disc_A(I)>0$. If $a\geq v$, then, since $v\geq y-1>2$, we immediately get $a>2$, contradicting $a\leq2$. Therefore, $b\leq u\leq y - 1$. In particular, $y > a$. By Lemma \ref{mzero}, we have $m_A^-(y)=0$, since $a+y=x+1\in A$.
Therefore, by Lemma \ref{minterv}, applied with
$[\alpha,\beta]=[a,y]$, we get
$$
    |(A+A)\cap[2a,x+1]|\geq 2(\F{y}-\F{a}).
$$
Since $A$ is sum free we have
$$
    2(\F{y}-\F{a})+\F{x+1}-\F{2a}\leq x+1-2a=y-a.
$$
Note that $y = x + 1 - a\in [x - 1, x]$. Since $x+1\in A$, the reflection of $A\cap[1,a]$ through $x+1$ is disjoint
from $A\cap[y,x]$. Indeed, if $r\in A\cap[1,a]$, then $x+1-r\in[y,x]$, and
$x+1-r$ cannot belong to $A$, since otherwise
$r+(x+1-r)=x+1\in A$. Therefore
\begin{equation*}
    (\F{x} - \F{y}) + \F{a} \leq x-y=a - 1. 
\end{equation*}
Therefore, 
\begin{equation*}
    \F{x - 1} + \F{x} + \F{x + 1}
    \leq y - 1 + \F{a} + \F{2a}
    = x - a + \F{a} + \F{2a}. 
\end{equation*}
Since $a\leq 2$, we have $2a\leq a+2$, and therefore, by sum-freeness and $1\in A$, we have
\begin{equation*}
    \F{2a}-\F{a}\leq 1.
\end{equation*}
Hence,  
\begin{equation*}
    \F{x - 1} + \F{x} + \F{x + 1}
    \leq x-a+1+2\F{a}.
\end{equation*}
Finally, by maximality of $I$, we have $\F{a}\leq (a-1)/2$, otherwise $[1,b]$
would have larger discrepancy than $I$. Therefore
\begin{equation*}
    \F{x - 1} + \F{x} + \F{x + 1}
    \leq x-a+1+a-1=x,
\end{equation*}
a contradiction.
\end{proof}
Since $x-1-y=a-2$, Theorem \ref{offdiagonal main2}, applied to $A\cap[y,x-1]$, gives
$$
    |(A-A)\cap[0,a-2]|
    \geq 2(\F{x-1}-\F{y})-t,
$$
where we used $d(A\cap[y,x-1])\leq t$. Since $A$ is sum free, $(A-A)\cap[0,a-2]$ is disjoint from $A\cap[0,a-2]$. Therefore
\begin{equation} \label{diff eq}
    2(\F{x-1}-\F{y})+\F{a-2}\leq a-2+t.
\end{equation}
Similarly, since $x+1-y=a$, Theorem \ref{offdiagonal main2}, applied to $A\cap[y,x+1]$, gives
$$
    |(A-A)\cap[0,a]|
    \geq 2(\F{x+1}-\F{y})-t.
$$
Again by sum-freeness, $(A-A)\cap[0,a]$ is disjoint from $A\cap[0,a]$, and hence,  
\begin{equation} \label{alt diff eq}
    2(\F{x+1}-\F{y})+\F{a}\leq a+t.
\end{equation}
By Lemma \ref{mzero}, applied with $z=y-2$, we have $m_A^-(y-2)=0$, since
$$
    a+(y-2)=x-1\in A.
$$
Therefore, by Lemma \ref{minterv}, applied to
$[\alpha,\beta]=[1,y-2]$, we get
$$
    |(A+A)\cap[a+1,a+y-2]|
    \geq 2|A\cap[1,y-2]|
    =
    2\F{y-2}.
$$
Since this sumset is contained in $[a+1,a+y-2]=[a+1,x-1]$ and is disjoint from $A$, we get
$$
    2\F{y-2}+(\F{x-1}-\F{a+1})\leq y-3.
$$
We define $g\geq0$ by
\begin{equation} \label{direct disc eq}
    2\F{y-2}+(\F{x-1}-\F{a+1})=y-3-g.
\end{equation}
Combining \eqref{alt diff eq} with \eqref{direct disc eq}, and using $a+y=x+1$, gives
$$
    \F{x-1}+2\F{x+1}
    \leq x-2+t-g+(\F{a+1}-\F{a})+2(\F{y}-\F{y-2}).
$$
By the maximality of the discrepancy of $I$, applied to $[a,a+1]$, we have
$$
    \F{a+1}-\F{a}\leq \frac{1+t}{2}.
$$
Therefore
$$
    \F{x-1}+2\F{x+1}
    \leq
    x-2+t-g+\frac{1+t}{2}+2(\F{y}-\F{y-2}).
$$
Since
$$
    \F{x-1}+\F{x}+\F{x+1}>x
$$
and $\F{x+1}-\F{x}>1-t$, we have
$$
    \F{x-1}+2\F{x+1}>x+1-t.
$$
Together with the previous upper bound, this gives
$$
    \F{y}-\F{y-2}>\frac{5}{4}(1-t)+\frac{g}{2}.
$$
In particular,
$$
    \F{y}-\F{y-2}>\frac{5}{4}(1-t).
$$
From this estimate we deduce also that $u>y-2$, that will be used in
the proof of the next lemma. To prove this, we argue by contradiction. Suppose
that $u\leq y-2$. Since $u=\max(A\cap (-\infty,y-1])$, this implies that
$A\cap (y-2,y-1]=\varnothing$. Moreover, by Lemma \ref{gapy}, the set $A$ is
disjoint from $[y-1,y-s]$. Hence,  
$$
\F{y}-\F{y-2}=|A\cap [y-s,y]|.
$$
We now use that $x+1\in A$ and that $A$ is sum free. Since
$$
x+1-[y-s,y]=[a,b]=I,
$$
the map $z\mapsto x+1-z$ sends $A\cap [y-s,y]$ into $I\setminus A$. Therefore
$$
\F{y}-\F{y-2}
=
|A\cap [y-s,y]|
\leq |I\setminus A|
=
s-|A\cap I|
=
\frac{s-t}{2}.
$$
Since $s\leq 1$, this gives
$$
\F{y}-\F{y-2}\leq \frac{1-t}{2},
$$
contradicting the previous estimate. This proves that $u>y-2$.

We now return to the neighbourhood of $x$. The next lemma shows that the information already obtained forces another empty interval there.
\begin{lem}
    $A$ is disjoint from $[u+b-1,x]$, which contains $[x+s-1,x]$
\end{lem}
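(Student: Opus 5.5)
The containment is immediate. Since $u=\max(A\cap(-\infty,y-1])$, we have $u\leq y-1$. Using $b=a+s$ and $a+y=x+1$,
\[u+b-1\leq y-1+b-1=(x+1-a)+(a+s)-2=x+s-1,\]
so $[u+b-1,x]\supseteq[x+s-1,x]$. The real content is the disjointness, which I plan to prove by contradiction. Suppose $z\in A\cap[u+b-1,x]$. As in the first case of Lemma \ref{y > 3 lem}, sum-freeness makes the translate $z-(A\cap I)$ disjoint from $A$. This translate has measure $(s+t)/2$ and lies in $[z-b,z-a]$. Because $z\geq u+b-1$ and $z\leq x=a+y-1$, that interval sits inside $[u-1,y-1]$. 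By the estimate $u>y-2$ just proved, $[u-1,y-1]$ has length $y-u<1$ and lies within distance $1$ of the window $[y-2,y]$.

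The aim is to contradict the bound $\F{y}-\F{y-2}>\frac54(1-t)$ (indeed $>\frac54(1-t)+\frac g2$) derived just before the statement. I would decompose $[y-2,y]$ using the known gaps. By the definition of $u,v$ and Lemma \ref{gapy}, $A$ misses $(u,y-s)$, so
\[\F{y}-\F{y-2}=|A\cap[y-2,u]|+|A\cap[y-s,y]|.\]
The second term is at most $|I\setminus A|=(s-t)/2$, by reflecting through $x+1\in A$ exactly as in the proof that $u>y-2$.

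For the first term I intend to show that $[y-2,u]$, an interval of length $u-y+2<1$, is largely eaten by the excluded set $z-(A\cap I)$. The translate occupies $[z-b,z-a]$, which ends at $z-a\geq u+s-1$ and begins at $z-b\geq u-1$. Part of it may fall below $y-2$, and its part above $u$ lands in the gap, where it gives no information. To handle these overhangs I would also use the shifted copy $(A\cap[y-3,u-1])+1$, which is disjoint from $A$ because $1\in A$, together with the bound $|A\cap J|\leq 1$ for $|J|\leq2$. The hope is to get $|A\cap[y-2,u]|\leq 1-\frac{s+t}{2}$ up to the slack coming from the overhang, or else to carry the parameter $g$ from \eqref{direct disc eq}. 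Combined with the previous paragraph this would give $\F{y}-\F{y-2}\leq 1-t$ (or $\leq 1-\frac{s+t}2+\frac{s-t}2$). That is below $\frac54(1-t)$ as soon as $t<1$, and $t<1$ holds by Lemma \ref{s<1}.

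The main obstacle is the accounting in $[y-2,u]$ when the translate $[z-b,z-a]$ overhangs the endpoints. If the direct bound is too lossy, I would fall back on a second sum-free constraint. The differences $z-r$ with $r\in A\cap[y-2,u]$ lie in $[b-1,x-y+2]=[b-1,a+1]$, which is essentially the interval $[a,a+1]$ whose mass was controlled through discrepancy maximality ($\F{a+1}-\F{a}\leq\frac{1+t}2$). Brunn--Minkowski applied to $z-(A\cap[y-2,u])$ against $A\cap[b-1,a+1]$, combined with \eqref{alt diff eq}, should recover the missing $\frac14(1-t)$ margin.
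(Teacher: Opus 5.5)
Your overall plan matches the paper's. You assume $z\in A\cap[u+b-1,x]$ and use that $z-(A\cap I)$ avoids $A$. You split $\F{y}-\F{y-2}=|A\cap[y-2,u]|+|A\cap[y-s,y]|$, bound the second term by $(s-t)/2$ via reflection through $x+1$, and aim to contradict $\F{y}-\F{y-2}>\frac54(1-t)$.

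However, the step that carries the content, namely $|A\cap[y-2,u]|\le 1-\frac{s+t}{2}$, is never proved. You flag the overhangs of $[z-b,z-a]$ beyond $[y-2,u]$ as the main obstacle. The remedies you offer are only hoped to work: the shifted copy $(A\cap[y-3,u-1])+1$, and Brunn--Minkowski against $A\cap[b-1,a+1]$ combined with \eqref{alt diff eq}. You give no computation showing they recover the needed margin.

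There is also a factual slip here. The interval $[u-1,y-1]$ does not have length less than $1$: since $u\le y-1$, its length $y-u$ is at least $1$, and $u>y-2$ only gives $y-u<2$. Had it been shorter than $1$ you would be done at once, since both $[y-2,u]$ and $[z-b,z-a]$ lie in it.

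The missing observation is that the overhangs are harmless. All you need is a single window of length $1$ that contains both $[z-b,z-a]$ and $[y-2,u]$; whether the translate lies inside $[y-2,u]$ is irrelevant. Use $z-b\ge u-1$ (from $z\ge u+b-1$) and $z-a\le x-a=y-1$.

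If $z-b\ge y-2$, take the window $[y-2,y-1]$. It contains $[z-b,z-a]$, and it contains $[y-2,u]$ because $u\le y-1$.

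If $z-b<y-2$, take the window $[z-b,z-b+1]$. It contains $[z-b,z-a]$ because $s\le 1$, and it contains $[y-2,u]$ because $u\le z-b+1$.

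In either case $z-(A\cap I)$ and $A\cap[y-2,u]$ are disjoint subsets of this window, so $\frac{s+t}{2}+|A\cap[y-2,u]|\le 1$. Combined with your bound on the second term, this gives $\F{y}-\F{y-2}\le 1-t<\frac54(1-t)$. With this case split in place your argument closes, and no appeal to $g$, \eqref{alt diff eq} or Brunn--Minkowski is needed.
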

\begin{proof}
Since $u\leq y-1$, we have
$$
    u+b-1\leq y-1+b-1=x+s-1,
$$
so $[u+b-1,x]$ contains $[x+s-1,x]$.

Suppose, for a contradiction, that there exists
$z\in A\cap[u+b-1,x]$. Then
$$
    z-(A\cap I)\subseteq [z-b,z-a]
$$
and this set is disjoint from $A$, by sum-freeness.

We claim that there is an interval $I'$ of length $1$ containing both
$[z-b,z-a]$ and $[y-2,u]$. Indeed, since $z\in[u+b-1,x]$, we have
$$
    z-b\geq u-1,
    \qquad
    z-a\leq x-a=y-1.
$$
If $z-b\geq y-2$, then $[z-b,z-a]\subseteq[y-2,y-1]$, and also
$[y-2,u]\subseteq[y-2,y-1]$. If instead $z-b<y-2$, then
$[z-b,z-a]\subseteq[z-b,z-b+1]$, because $s\leq1$, and
$[y-2,u]\subseteq[z-b,z-b+1]$, because $z-b<y-2$ and $z-b\geq u-1$.
This proves the claim.

Now $z-(A\cap I)$ and $A\cap[y-2,u]$ are disjoint subsets of the interval
$I'$, since the first one is disjoint from $A$ and the second one is contained
in $A$. Therefore
$$
    \frac{s+t}{2}+\F{u}-\F{y-2}\leq 1,
$$
and hence
$$
    \F{u}-\F{y-2}\leq 1-\frac{s+t}{2}.
$$

On the other hand, reflection through $x+1$ sends $[y-s,y]$ onto $I$.
Since $x+1\in A$, this reflection sends $A\cap[y-s,y]$ into $I\setminus A$.
Thus
$$
    \F{y}-\F{y-s}\leq |I\setminus A|=\frac{s-t}{2}.
$$
Since $A\cap[y-1,y-s]=\varnothing$, we have
$$
    \F{y}-\F{y-1}=\F{y}-\F{y-s}\leq \frac{s-t}{2}.
$$
Also, by the definition of $u$, there is no mass of $A$ in $(u,y-1]$, so
$\F{y-1}=\F{u}$. Therefore
$$
\begin{aligned}
    \F{y}-\F{y-2}
    &=
    (\F{u}-\F{y-2})+(\F{y}-\F{y-1})  \\
    &\leq
    1-\frac{s+t}{2}+\frac{s-t}{2}
    =
    1-t,
\end{aligned}
$$
contradicting the previously proved inequality
$$
    \F{y}-\F{y-2}>\frac{5}{4}(1-t).
$$
\end{proof}
We now show that there is a gap of length at least $1$ around $x$. To this end, let
$$
u':=\max(A\cap(-\infty,x]),\qquad v':=\min(A\cap[x,\infty)).
$$
Thus it will be enough to prove that $v'-u'\geq 1$.
\begin{lem}
    $v'\geq u' + 1$. 
\end{lem}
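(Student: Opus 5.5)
The plan is to copy the proof of Lemma~\ref{y - 1 gap lem}, but reflected through the point $x$ instead of translated onto $[x,x+1]$. I argue by contradiction and assume $v'<u'+1$.

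First I locate $u'$ and $v'$. Since $[x-1-\e,x-1+\e]\subseteq A$, we have $u'\geq x-1+\e$. By the previous lemma, $A$ is disjoint from $[u+b-1,x]\supseteq[x+s-1,x]$, so $u'<x+s-1$. Since $[x+1-\e,x+1]\subseteq A$, we have $v'\leq x+1-\e$, and also $v'>x$. The assumption $v'<u'+1$ then gives $u'>v'-1>x-1$.

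Next, because $u',v'\in A$ and $A$ is sum free, the two sets $u'-(A\cap I)$ and $v'-(A\cap I)$ are disjoint from $A$. Their convex hulls are
\[
[u'-b,\,u'-a]\qquad\text{and}\qquad[v'-b,\,v'-a].
\]
Using $x+1-a=y$ and the bounds on $u'$ and $v'$, we get $u'-b<y-2$ and $v'-a>y-1$. Hence the two translated intervals cover the window $[y-2,y-1]$ except for the gap between $u'-a$ and $v'-b$. That gap has length $v'-u'-s<1-s$.

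Passing from the sets $u'-(A\cap I)$ and $v'-(A\cap I)$ to the full intervals loses at most $2|I\setminus A|=s-t$. Therefore
\[
\F{y-1}-\F{y-2}<(1-s)+(s-t)=1-t.
\]
I then add the reflection bound already established, namely $\F{y}-\F{y-1}\leq(s-t)/2$, which came from $x+1\in A$ together with $A\cap[y-1,y-s]=\varnothing$. This yields an upper bound for $\F{y}-\F{y-2}$, which I compare with the lower bound
\[
\F{y}-\F{y-2}>\tfrac54(1-t)+\tfrac g2
\]
proved just before the previous lemma.

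The main obstacle is that this crude count gives only $\F{y}-\F{y-2}<\tfrac32(1-t)$, which does not beat $\tfrac54(1-t)$. I expect to recover the missing $\tfrac14(1-t)$ in three ways. First, only the portions of $I\setminus A$ whose translates actually land inside the window $[y-2,y-1]$ should be charged; in particular the part of $v'-I$ beyond $y-1$ overlaps the region already handled by the reflection through $x+1$, so that loss should not be counted twice. Second, I can use $u>y-2$ and $u=\max(A\cap(-\infty,y-1])$, so that $A$ has no mass in $(u,y-1]$; this should allow the window to be shrunk to $[y-2,u]$, where only one of the two translates matters. Third, if necessary, I will also use the slack $g$ from \eqref{direct disc eq} and the inequality \eqref{diff eq}, which controls $\F{x-1}-\F{y}$, to sharpen the upper side. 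If both loss terms can be reduced to roughly $(s-t)/2$, the bound closes, giving $\F{y}-\F{y-2}\leq 1-t$ and hence a contradiction.
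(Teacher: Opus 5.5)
Your setup is the paper's: contradiction, the forbidden sets $u'-(A\cap I)$ and $v'-(A\cap I)$, reflection through $x+1$, and comparison with $\F{y}-\F{y-2}>\frac54(1-t)$. But the proposal has a genuine gap: it stops exactly where the proof happens. The only count you carry out gives $\F{y}-\F{y-2}<1-t+\frac{s-t}{2}$, which you rightly note is too weak, and the three remedies are stated as hopes rather than arguments. Your first remedy cannot work on its own in the window $[y-2,y-1]$. There the loss from the second translate sits on $[a+v'-x,b]$, while the reflection loss sits on $[a,a+y-v]$ (when $v<y$). These overlap whenever $y-v>v'-x$, and nothing established so far rules that out. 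Your second remedy points the right way, but the description is wrong: in $[y-2,u]$ both translates are needed, since $u'-I$ ends at $u'-a<u+s-1<u$ and $v'-I$ starts at $v'-b>y-1-s>y-2$. The third remedy (using $g$ and \eqref{diff eq}) is not needed.

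Here is the missing accounting.
\begin{enumerate}
\item Use the full strength of the previous lemma, $u'<u+b-1$, rather than only $u'<x+s-1$. This gives $v'<u'+1<u+b$, so $v'-I$ reaches down to $u$. Hence $(u'-I)\cup(v'-I)$ covers $[y-2,u]$ except for a gap of length $v'-u'-s<1-s$.
\item Nothing is lost by discarding $(u,y-1]$, because $\F{u}=\F{y-1}$.
\item Inside $[y-2,u]$, the first translate loses at most $\frac{s-t}{2}$. The second loses only $|A^c\cap[v'-u,b]|$.
\item Split on $v$. If $v\geq y$, then $\F{y}=\F{y-1}$ and $[v'-u,b]\subseteq I$, so that loss is at most $\frac{s-t}{2}$.
\item If $v<y$, reflection through $x+1$ gives $\F{y}-\F{y-1}\leq|A^c\cap[a,a+y-v]|$. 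The gap $v-u\geq 1$ from Lemma~\ref{y - 1 gap lem}, which your proposal never invokes, gives $a+y-v=x+1-v\leq x-u\leq v'-u$. So the two charged pieces are disjoint subintervals of $I$ and together cost at most $\frac{s-t}{2}$.
\end{enumerate}
The total loss is then $s-t$ rather than $\frac32(s-t)$. This yields $\F{y}-\F{y-2}<(1-s)+(s-t)=1-t\leq\frac54(1-t)$, which is the required contradiction.
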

\begin{proof}
Suppose, for a contradiction, that $v'-u'<1$. By the previous lemma, $A$ is
disjoint from $[u+b-1,x]$. Since $x-1\in A$, this gives
$$
    u'\in [x-1,u+b-1).
$$
Moreover, $x\notin A$ and $x+1\in A$, so $v'>x$; hence, using
$v'-u'<1$, we also have
$$
    v'\in (x,u+b).
$$

Since $u',v'\in A$, both sets
$$
    u'-(A\cap I),
    \qquad
    v'-(A\cap I)
$$
are disjoint from $A$. Therefore, inside $[y-2,u]$,
$$
    \left| \bigl((u'-(A\cap I))\cup(v'-(A\cap I))\bigr)\cap[y-2,u]\right|
    +\F{y-1}-\F{y-2}
    \leq u-y+2.
$$
Here we used that $\F{y-1}=\F{u}$, since $A\cap(u,y-1]=\varnothing$.

Passing from $A\cap I$ to the full interval $I$, the first translate can lose
at most $|I\setminus A|=(s-t)/2$, while the second translate loses at most
$|A^c\cap[v'-u,b]|$. Hence,  
$$
\begin{aligned}
    &\left| \bigl((u'-I)\cup(v'-I)\bigr)\cap[y-2,u]\right|
    +\F{y-1}-\F{y-2} \\
    &\qquad\leq
    u-y+2+\frac{s-t}{2}+|A^c\cap[v'-u,b]|.
\end{aligned}
$$
If $v\geq y$, then $A\cap[y-1,y]=\varnothing$, and hence $\F{y}=\F{y-1}$. Moreover, $[v'-u,b]\subseteq I$. Indeed, since $v'\geq x$ and
$u\leq y-1$, we have
$$
    v'-u\geq x-u\geq x-y+1=a.
$$
Moreover, since we are assuming $v'-u'<1$ and we have just proved
$u'<u+b-1$, we get the other inclusion since
$$
    v'<u'+1<u+b.
$$
Therefore
$$
    |A^c\cap [v'-u,b]|
    \leq |I\setminus A|
    =
    \frac{s-t}{2}.
$$
Thus, in this case,
$$
\begin{aligned}
    &\left|
        \bigl((u'-I)\cup(v'-I)\bigr)\cap [y-2,u]
    \right|
    +\F{y}-\F{y-2}  \\
    &\qquad\leq
    u-y+2+\frac{s-t}{2}+|A^c\cap [v'-u,b]|  \\
    &\qquad\leq
    u-y+2+s-t.
\end{aligned}
$$

If $v<y$ we have, by definition of $v$,
$$
    \F{y}-\F{y-1}=\F{y}-\F{v}\leq |A^c\cap[a,a+y-v]|.
$$
Indeed, reflection through $x+1$ sends $A\cap[v,y]$ into
$[a,a+y-v]\setminus A$.

Combining the last two estimates gives
$$
\begin{aligned}
    &\left| \bigl((u'-I)\cup(v'-I)\bigr)\cap[y-2,u]\right|
    +\F{y}-\F{y-2} \\
    &\qquad\leq
    u-y+2+\frac{s-t}{2}
    +|A^c\cap[v'-u,b]|+|A^c\cap[a,a+y-v]|.
\end{aligned}
$$
Since $v-u\geq1$ and $v'\geq x$, we have
$$
    a+y-v=x+1-v\leq v'-u.
$$
Thus the intervals $[a,a+y-v]$ and $[v'-u,b]$ are disjoint subintervals of
$I$. Therefore
$$
    |A^c\cap[v'-u,b]|+|A^c\cap[a,a+y-v]|
    \leq |I\setminus A|
    =
    \frac{s-t}{2}.
$$
Hence, as in the previous case we arrive at
$$
    \left| \bigl((u'-I)\cup(v'-I)\bigr)\cap[y-2,u]\right|
    +\F{y}-\F{y-2}
    \leq u-y+2+s-t.
$$
Using the previously proved inequality
$$
    \F{y}-\F{y-2}>\frac54(1-t),
$$
we obtain
$$
    \left| \bigl((u'-I)\cup(v'-I)\bigr)\cap[y-2,u]\right|
    < u-y+1+s.
$$

On the other hand, $u'-I$ contains $y-2$, because $u'\geq x-1=a+y-2$ and
$u'<u+b-1$. Also $v'-I$ contains $u$, because $v'>x$ and $v'<u+b$.
Finally,
$$
    \min(v'-I)-\max(u'-I)
    =
    (v'-b)-(u'-a)
    =
    v'-u'-s
    <1-s.
$$
Therefore,   the union $(u'-I)\cup(v'-I)$ covers all of $[y-2,u]$ except for
an interval of length strictly less than $1-s$. Hence,  
$$
    \left| \bigl((u'-I)\cup(v'-I)\bigr)\cap[y-2,u]\right|
    >
    u-y+2-(1-s)
    =
    u-y+1+s,
$$
contradicting the previous upper bound.
\end{proof}

The gap around $x$ has an immediate consequence for the discrepancy-maximizing interval itself. We now show that its discrepancy must in fact be greater than $1/2$.
\begin{cor} \label{rough t bound}
    $t > \frac{1}{2}$. 
\end{cor}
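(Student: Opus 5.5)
The plan is to add two ``difference-set'' inequalities, at the scales $x$ and $x+1$, and compare the sum with twice the assumed failure $\F{x-1}+\F{x}+\F{x+1}>x$. The difference turns out to be exactly $\F{x}-\F{x-1}$, which is non-negative. I expect this to work without even invoking the gap $v'-u'\geq 1$ just established, although that gap is consistent with, and explains, the picture.

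\textbf{Step 1.} Recall the first inequality from the proof of Lemma \ref{fx+1-fx}. Apply Theorem \ref{offdiagonal main2} to $A\cap[1,x]$ and $-(A\cap[1,x])$, using $d(A\cap[1,x])\leq d(A\cap[0,x+1])=t$ and the central symmetry of the difference set. This gives $|(A-A)\cap[0,x-1]|\geq 2\F{x}-t$. By sum-freeness this part of $A-A$ is disjoint from $A\cap[0,x-1]$, so
\[\F{x-1}+2\F{x}\leq x-1+t.\]

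\textbf{Step 2.} Do the same one scale up, with $A\cap[1,x+1]$, whose discrepancy is exactly $t$ by the definition of $t$. We get $|(A-A)\cap[0,x]|\geq 2\F{x+1}-t$. This set is disjoint from $A\cap[0,x]$, hence
\[\F{x}+2\F{x+1}\leq x+t.\]

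\textbf{Step 3.} Add the two inequalities to get $\F{x-1}+3\F{x}+2\F{x+1}\leq 2x-1+2t$. The left-hand side equals $2\bigl(\F{x-1}+\F{x}+\F{x+1}\bigr)+\bigl(\F{x}-\F{x-1}\bigr)$. By the choice of $x$ the first bracket exceeds $x$, so
\[0\leq\F{x}-\F{x-1}<2x-1+2t-2x=2t-1,\]
which forces $t>\tfrac12$. In fact one gets more, since $[x-1,x-1+\e]\subseteq A$ gives $\F{x}-\F{x-1}>0$.

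The only point needing care is the use of Theorem \ref{offdiagonal main2} with the measure of $A-A$ restricted to a half-line. Here $A'-A'$ is symmetric about $0$, so at least half of $|A'-A'|\geq 4|A'|-2d(A')$ lies in $[0,\operatorname{diam}A']$. Since $\min A=1$, the diameter is at most $x-1$ (respectively $x$). This is exactly the manipulation already made in Lemma \ref{fx+1-fx}, and checking it is the main thing I would verify.

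If this shortcut somehow failed, the fallback would be to use the gap $(u',v')$. Then $\F{x+1}-\F{x}\leq x+1-v'$ and $\F{x}-\F{x-1}\leq u'-(x-1)$ together give at most $1$ of mass in $[x-1,x+1]$. Combining this with Lemma \ref{fx+1-fx} and the discrepancy bound on $[v',x+1]$ should again yield $t>\tfrac12$.
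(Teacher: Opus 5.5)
Your proof is correct, and it takes a genuinely different and much shorter route than the paper. Both inequalities are valid. Step 2 is the estimate of Lemma \ref{weaker inequality} at $u=x+1$, with the crude bound $d(A_{x+1})\le 1$ replaced by $d(A\cap[1,x+1])=t$. Averaging it with the inequality from Lemma \ref{fx+1-fx} gives $f(x-1)+f(x)+f(x+1)\le x+t-\tfrac12-\tfrac12\bigl(f(x)-f(x-1)\bigr)$, so $t>\tfrac12$, and in fact $t>\tfrac12+\tfrac{\epsilon}{2}$.

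The paper instead applies Theorem \ref{offdiagonal main2} only once, to $A\cap[1,u']$. It then bounds $f(x+1)-f(x)$ and $f(x-1)-f(u'-1)$ using discrepancy-maximality of $I$ on $[v',x+1]$ and $[u'-1,x-1]$, reaching $f(x-1)+f(x)+f(x+1)\le x+2t-1$. That argument needs the gap $v'\ge u'+1$ around $x$. The gap in turn rests on the whole preceding chain: $y>3$, $a>2$, $f(y)-f(y-2)>\frac54(1-t)$, $u>y-2$, and the empty interval $[u+b-1,x]$.

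Your route shows the corollary needs nothing beyond the setup and Lemma \ref{fx+1-fx}. So it could be stated right after that lemma; equivalently, Lemma \ref{key lemma} is immediate whenever $t\le\tfrac12$. Your intermediate bound is also never weaker than the paper's once $t\ge\tfrac12$.

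The paper's ordering gains nothing essential for this statement. The gap $v'\ge u'+1$ and the empty interval $[u+b-1,x]$ are needed anyway in the final contradiction. Your shortcut therefore decouples this corollary from those lemmas but does not remove them from the overall proof. Your fallback paragraph is unnecessary.
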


\begin{proof}
Since $\F{x}=\F{u'}$, by Theorem \ref{offdiagonal main2} applied to
$A\cap[1,u']$, and using $d(A\cap[1,u'])\leq t$, we have
$$
    |(A-A)\cap[0,u'-1]|\geq 2\F{u'}-t.
$$
Since $A$ is sum free, $(A-A)\cap[0,u'-1]$ is disjoint from
$A\cap[0,u'-1]$. Therefore
$$
    2\F{x}+\F{u'-1}=2\F{u'}+\F{u'-1}\leq u'-1+t.
$$
Moreover, since $\F{x}=\F{v'}$, maximality of $I$ applied to $[v',x+1]$ gives
$$
    \F{x+1}-\F{x}
    =
    \F{x+1}-\F{v'}
    \leq \frac{x+1-v'+t}{2}.
$$
Since $v'-u'\geq1$, this implies
$$
    \F{x+1}-\F{x}
    \leq \frac{x-u'+t}{2}.
$$
Similarly, maximality of $I$ applied to $[u'-1,x-1]$ gives
$$
    \F{x-1}-\F{u'-1}
    \leq \frac{x-u'+t}{2}.
$$
Adding the three inequalities, we obtain
$$
    \F{x-1}+\F{x}+\F{x+1}
    \leq x+2t-1.
$$
Since $\F{x-1}+\F{x}+\F{x+1}>x$, it follows that $t>\frac12$.
\end{proof}
\subsection{Reducing to the range $3<y<3+s$}
In this subsection we show that $y$ must in fact lie in the range $3<y<3+s$. The lower bound $y>3$ has already been established, so the remaining goal is to prove the upper bound $y<3+s$, using the gap around $x$ and the inequality $t>1/2$.

Combining \eqref{diff eq} and \eqref{direct disc eq}, and using
$a+y=x+1$, gives
$$
    3\F{x - 1}
    \leq
    x - 4 + t
    +2(\F{y}-\F{y-2})
    +\F{a+1}-\F{a-2}
    -g .
$$
Since $A\cap[y-1,y-s]=\varnothing$, we have
$$
    |A\cap[y-1,y]|=|A\cap[y-s,y]|.
$$
Moreover, reflection through $x+1$ sends $A\cap[y-s,y]$ into
$I\setminus A$, because $x+1\in A$ and $A$ is sum free. Hence,  
$$
    \F{y}-\F{y-1}
    =
    |A\cap[y-1,y]|
    \leq
    |I\setminus A|
    =
    \frac{s-t}{2}.
$$
Therefore,   we have
\begin{equation} \label{three Fxminus estimate}
    3\F{x - 1}
    \leq
    x - 4 + s
    +2(\F{y-1}-\F{y-2})
    +\F{a+1}-\F{a-2}
    -g .
\end{equation}
We now estimate the mass of $A$ inside $[x-1,x+1]$. Since
$v'\geq u'+1$ and $s\leq1$, the interval $[u',u'+s]$ is disjoint from $A$,
up to endpoints.

We shall also construct a further subset of $[x-1,x+1]\setminus A$ coming
from $A+A$. First,
$$
    (A\cap[y-2,u'-a])+a\subseteq [x-1,u'].
$$
Second,
$$
    (A\cap[u'-a,u])+(A\cap[b,a+1])\subseteq [u'+s,u+a+1].
$$
Indeed, the left endpoint is $u'-a+b=u'+s$, while the right endpoint is
$u+a+1$. Finally, if $v\leq y$, then
$$
    (A\cap[v,y])+a\subseteq [v+a,x+1].
$$
All these sets are contained in $A+A$, and hence are disjoint from $A$, since
$A$ is sum free.

The three intervals just considered are disjoint up to endpoints. Indeed, the
first ends at $u'$, the second starts at $u'+s$, and the third starts at
$v+a\geq u+a+1$, because $v-u\geq1$. Therefore,   the corresponding sumset pieces
are disjoint up to endpoints.

The first piece has measure
$$
    \F{u'-a}-\F{y-2}.
$$
For the second piece, by the one-dimensional Brunn--Minkowski inequality,
$$
    |(A\cap[u'-a,u])+(A\cap[b,a+1])|
    \geq
    (\F{u}-\F{u'-a})+(\F{a+1}-\F{b}).
$$
If $v\leq y$, the third piece has measure
$$
    \F{y}-\F{v}.
$$
Since $A\cap(u,v)=\varnothing$, we have $\F{v}=\F{u}$. Hence, in this case, the
total forbidden contribution coming from these sumsets is at least
$$
    (\F{u'-a}-\F{y-2})
    +(\F{u}-\F{u'-a})
    +(\F{a+1}-\F{b})
    +(\F{y}-\F{v})
    =
    \F{y}-\F{y-2}+\F{a+1}-\F{b}.
$$
If instead $v>y$, then $\F{y}=\F{u}$, and the same lower bound follows from the
first two pieces alone.

Thus, inside the interval $[x-1,x+1]$, the set $A$ misses the interval
$[u',u'+s]$, of length $s$, and also misses a further disjoint set of measure
at least
$$
    \F{y}-\F{y-2}+\F{a+1}-\F{b}.
$$
Since $[x-1,x+1]$ has length $2$, we obtain
\begin{equation} \label{x interval mass estimate}
    \F{x+1}-\F{x-1}
    \leq
    2-s
    -(\F{y}-\F{y-2})
    -(\F{a+1}-\F{b}).
\end{equation}
Combining \eqref{three Fxminus estimate} with \eqref{x interval mass estimate}, we obtain, by using $\F{y-1}\leq \F{y}$ and $\F{b}\leq \F{a+1}$
$$
    2\F{x+1}+\F{x-1}
    \leq
    x-s+\F{b}-\F{a-2}-g.
$$
Since $\F{b}-\F{a}=|A\cap I|=(s+t)/2$, this becomes
$$
    2\F{x+1}+\F{x-1}
    \leq
    x-\frac{s-t}{2}+\F{a}-\F{a-2}-g.
$$
On the other hand, our standing contradiction assumption, together with
$\F{x+1}-\F{x}>1-t$, gives
$$
    2\F{x+1}+\F{x-1}
    =
    \F{x-1}+\F{x}+\F{x+1}+(\F{x+1}-\F{x})
    >
    x+1-t.
$$
Comparing the last two estimates yields
\begin{equation} \label{boost and interval around a}
    1-t+g+\frac{s-t}{2}<\F{a}-\F{a-2}.
\end{equation}

At this point, \eqref{boost and interval around a} shows that there is already a substantial amount of mass in the interval $[a-2,a]$. The next lemma identifies the part of this interval that will matter later, namely $[a-2,a-1]$, and will allow us to define a parameter $w$ from the leftmost point of $A$ there. In the extremal period-3 picture, one should think of $[a-2,a-1]$ as belonging to the preceding gap, so $w$ measures how far $A$ reaches into a region that would otherwise be empty.
\begin{lem}
    $A\cap[a-2,a-1]\neq \varnothing$.
\end{lem}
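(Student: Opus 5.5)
The plan is to argue by contradiction, using the mass estimate \eqref{boost and interval around a} together with the simple shift-by-$1$ consequence of sum-freeness. Suppose that $A\cap[a-2,a-1]=\varnothing$. Then $\F{a-1}=\F{a-2}$, so
$$
\F{a}-\F{a-2}=\F{a}-\F{a-1}=|A\cap[a-1,a]|,
$$
and \eqref{boost and interval around a} becomes
$$
|A\cap[a-1,a]|>1-t+g+\frac{s-t}{2}.
$$
It therefore suffices to prove an upper bound for $|A\cap[a-1,a]|$ that is at most $1-t+\frac{s-t}{2}$; since $g\geq 0$, this gives the contradiction. (We already know $a>2$, so all these intervals lie in the positive reals and $f$ is defined on them as usual.)

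To get the upper bound, I would use the fact that $1=\min A\in A$ and $A$ is sum free. The translate $(A\cap[a-1,a])+1$ lies in $[a,a+1]$ and is disjoint from $A$. It is therefore disjoint, up to measure zero, from $A\cap[a,a+1]$. The latter contains $A\cap I$, which has measure $(s+t)/2$, because $s<1$ gives $I=[a,b]\subseteq[a,a+1]$. Both sets live in an interval of length $1$, so
$$
|A\cap[a-1,a]|+\frac{s+t}{2}\leq |A\cap[a-1,a]|+\bigl(\F{a+1}-\F{a}\bigr)\leq 1 .
$$
Hence $|A\cap[a-1,a]|\leq 1-\frac{s+t}{2}=1-t-\frac{s-t}{2}$.

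Since $s\geq t$, this is at most $1-t+\frac{s-t}{2}\leq 1-t+g+\frac{s-t}{2}$, which contradicts the strict inequality displayed above. I do not expect a real obstacle here. The only points to check are that endpoint overlaps have measure zero and that $I\subseteq[a,a+1]$, which holds because $s<1$ by Lemma~\ref{s<1}. If a sharper statement were needed later, for instance a quantitative lower bound on $|A\cap[a-2,a-1]|$, the same computation would give it directly: $\F{a-1}-\F{a-2}>g+(s-t)$.
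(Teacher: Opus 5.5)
Your proof is correct and is essentially the paper's argument: assume the interval is empty, bound $\F{a}-\F{a-1}$ by $1-\frac{s+t}{2}$ using the shift by $1\in A$, and contradict \eqref{boost and interval around a} using $s\geq t$ and $g\geq 0$. The only difference is cosmetic. The paper splits $[a-1,a]$ at $b-1$, shifts $A\cap[a-1,b-1]$ into $I$, and bounds $[b-1,a]$ by its length $1-s$, whereas you shift all of $A\cap[a-1,a]$ into $[a,a+1]$ in one step; both give the same bound.
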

\begin{proof}
    Suppose not. Then
$$
    \F{a}-\F{a-2}=\F{a}-\F{a-1}.
$$
Since $1\in A$ and $A$ is sum free, the translate
$$
    1+(A\cap[a-1,b-1])
$$
is contained in $[a,b]$ and is disjoint from $A$. Hence,  
$$
    |A\cap[a-1,b-1]|+|A\cap[a,b]|\leq s.
$$
Since $|A\cap[a,b]|=(s+t)/2$, this gives
$$
    |A\cap[a-1,b-1]|\leq \frac{s-t}{2}.
$$
Moreover, since $b=a+s$ and $s\leq1$, we have $b-1\leq a$, and therefore
$$
    |A\cap[b-1,a]|\leq a-(b-1)=1-s.
$$
Thus
$$
    \F{a}-\F{a-1}
    \leq
    \frac{s-t}{2}+1-s
    =
    1-\frac{s+t}{2}.
$$
On the other hand, \eqref{boost and interval around a} gives
$$
    \F{a}-\F{a-2}
    >
    1-t+g+\frac{s-t}{2}
    =
    1-\frac{s+t}{2}+(s-t)+g
    \geq
    1-\frac{s+t}{2},
$$
a contradiction. Therefore,   $A\cap[a-2,a-1]\neq\varnothing$.
\end{proof}
Thanks to the previous lemma we can define $w$ so that $a - 2 + w = \max(A\cap [a - 2, a - 1])$. Then $a-2+w\in A$ and $A\cap(a-2+w,a-1]=\varnothing$, so $\F{a-1}=\F{a-2+w}$.
Hence,
$$
    \F{a}-\F{a-2}
    =
    (\F{a-2+w}-\F{a-2})
    +(\F{a-1+w}-\F{a-1})
    +(\F{a}-\F{a-1+w}).
$$
Since $1\in A$ and $A$ is sum free, the translate
$$
    1+(A\cap[a-2,a-2+w])
$$
is contained in $[a-1,a-1+w]$ and is disjoint from $A$. Therefore,
$$
    (\F{a-2+w}-\F{a-2})+(\F{a-1+w}-\F{a-1})\leq w.
$$
Moreover, since $a-2+w\in A$, the translate
$$
    (a-2+w)+(A\cap[1,2-w])
$$
is contained in $[a-1+w,a]$ and is disjoint from $A$. Hence,
$$
    \F{a}-\F{a-1+w}\leq 1-w-\F{2-w}.
$$
Combining these estimates gives
\begin{equation} \label{w ineq}
    \F{a}-\F{a-2}\leq 1-\F{2-w}.
\end{equation}

The next lemma gives a useful lower bound for the quantity $g$. For any point $c\in A$ with $c\leq y-s-2$, we combine the sum-set contributions coming from $[1,c]$ and $[c+s,y-2]$ with the translated set
$
c+(A\cap [a+m_A^-(c),b]).
$
This produces an extra contribution inside $[a+1,x-1]$, with the only loss coming from the interval $[c,c+s]$.

\begin{lem} \label{c lem}
    For any $c\in A$, if $c\leq y - s - 2 = x - b - 1$, then
    \begin{equation*}
        g\geq |A\cap [a + m_A^-(c), b]| - 2|A\cap [c, c + s]|. 
    \end{equation*}
\end{lem}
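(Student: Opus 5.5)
The plan mirrors the proof of Lemma \ref{another boost lem}, but on the sumset side inside $[a+1,x-1]$ rather than the difference side. Recall that $g$ is defined by \eqref{direct disc eq}: the quantity $y-3-g$ equals $2\F{y-2}+(\F{x-1}-\F{a+1})$. We also know that $A$ is disjoint from $(A+A)\cap[a+1,x-1]$. So it suffices to show
\[
|(A+A)\cap[a+1,a+y-2]|\geq 2\F{y-2}+|A\cap[a+m_A^-(c),b]|-2|A\cap[c,c+s]|.
\]
Then adding $\F{x-1}-\F{a+1}$ to both sides and comparing with the total length $y-3$ gives the stated lower bound on $g$.

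\textbf{Step 1 (three disjoint pieces).} First apply Lemma \ref{minterv} (sumset version) to $[\a,\be]=[1,c]$. Since $I$ is discrepancy maximizing, $m_A^-(c)\leq t$, and we obtain
\[
|(A+A)\cap[a+1,a+c+m_A^-(c)]|\geq 2\F{c}.
\]
Second, apply Lemma \ref{minterv} to $[\a,\be]=[c+s,y-2]$, using $m_A^-(y-2)=0$, which was established via Lemma \ref{mzero} because $a+y-2=x-1\in A$. This gives
\[
|(A+A)\cap[a+c+s,a+y-2]|\geq 2(\F{y-2}-\F{c+s}).
\]
The hypothesis $c\leq y-s-2$ ensures that $c+s\leq y-2$, so this interval is well defined. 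Third, since $c\in A$, the translate $c+(A\cap[a+m_A^-(c),b])$ lies in $A+A$ and sits inside $[a+c+m_A^-(c),a+c+s]$.

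\textbf{Step 2 (assembly).} The three intervals $[a+1,a+c+m_A^-(c)]$, $[a+c+m_A^-(c),a+c+s]$ and $[a+c+s,a+y-2]$ are disjoint up to endpoints. This uses $m_A^-(c)\leq t\leq s$ and $c\geq1$, and all three lie in $[a+1,x-1]$. Summing the three bounds gives
\[
2\F{y-2}-2(\F{c+s}-\F{c})+|A\cap[a+m_A^-(c),b]|,
\]
and $\F{c+s}-\F{c}=|A\cap[c,c+s]|$, which is exactly the required estimate.

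\textbf{Main obstacle.} The delicate point is checking the hypotheses of Lemma \ref{minterv}. That lemma requires $[\a,\be]\subseteq[0,x+1]$, which holds since $y-2\leq x+1$. One must also confirm that the middle piece really fits inside $[a+1,x-1]$, i.e.\ that $a+c+s\leq x-1$. This is exactly the hypothesis $c\leq x-b-1$, which explains why it appears in the statement. Apart from this, the argument is bookkeeping identical to the difference-set version.
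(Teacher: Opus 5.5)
Your proposal is correct and follows the paper's proof essentially step for step. You use the same three sumset pieces: Lemma \ref{minterv} on $[1,c]$, Lemma \ref{minterv} on $[c+s,y-2]$ using $m_A^-(y-2)=0$, and the translate $c+(A\cap[a+m_A^-(c),b])$. You also use the same endpoint ordering $a+1\leq a+c+m_A^-(c)\leq b+c\leq x-1$ and the same comparison with the definition of $g$ via sum-freeness inside $[a+1,x-1]$.
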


\begin{proof}
    Since $I$ is discrepancy maximizing, we have
    $$
        0\leq m_A^-(c)\leq t\leq s.
    $$
    By Lemma \ref{minterv}, applied to $[1,c]$, we have
    $$
        |(A+A)\cap [a+1,a+c+m_A^-(c)]|\geq 2\F{c}.
    $$
    Also, since $c\leq y-s-2$, we have $c+s\leq y-2$. Moreover
    $m_A^-(y-2)=0$, because
    $$
        a+(y-2)=x-1\in A.
    $$
    Hence, by Lemma \ref{minterv}, applied to
    $[c+s,y-2]$, we get
    $$
        |(A+A)\cap [a+c+s,a+y-2]|
        \geq 2(\F{y-2}-\F{c+s}).
    $$
     Since $a+c+s=b+c$ and $a+y-2=x-1$, this becomes
    $$
        |(A+A)\cap [b+c,x-1]|
        \geq 2(\F{y-2}-\F{c+s}).
    $$
    Finally, since $c\in A$, we have
    $$
        c+(A\cap [a+m_A^-(c),b])\subseteq A+A.
    $$
    This set is contained in $[a+c+m_A^-(c),b+c]$ and has measure
    $$
        |A\cap [a+m_A^-(c),b]|.
    $$

    The intervals
    $$
        [a+1,a+c+m_A^-(c)],\qquad
        [a+c+m_A^-(c),b+c],\qquad
        [b+c,x-1]
    $$
    are disjoint up to endpoints. Indeed, $m_A^-(c)\leq s$ gives
    $a+c+m_A^-(c)\leq a+c+s=b+c$, and the assumption
    $c\leq x-b-1$ gives $b+c\leq x-1$.
    Therefore, inside $[a+1,x-1]$, the set $A+A$ has measure at least
    $$
        2\F{c}
        +|A\cap [a+m_A^-(c),b]|
        +2(\F{y-2}-\F{c+s}).
    $$
    Since $A$ is sum free, we have
    $$
        \F{x-1}-\F{a+1}
        +2\F{c}
        +|A\cap [a+m_A^-(c),b]|
        +2(\F{y-2}-\F{c+s})
        \leq y-3.
    $$
    Using
    $$
        2\F{y-2}+\F{x-1}-\F{a+1}=y-3-g,
    $$
    we obtain
    $$
        2\F{c}
        +|A\cap [a+m_A^-(c),b]|
        +2(\F{y-2}-\F{c+s})
        \leq 2\F{y-2}+g.
    $$
    Rearranging gives the desired inequality
    $$
        g\geq |A\cap [a+m_A^-(c),b]|-2(\F{c+s}-\F{c}).
    $$
\end{proof}

The next lemma shows that the parameter $w$ gives a corresponding improvement in the upper bound for $y$. The more $A$ extends into the interval $[a-2,a-1]$, the stronger the resulting bound on $y$.
\begin{lem}
    $y < 4 - w + s$. 
\end{lem}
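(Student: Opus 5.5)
We argue by contradiction: suppose $y\geq 4-w+s$, which is equivalent to $y-s-2\geq 2-w$. The point of this hypothesis is that it makes Lemma~\ref{c lem} available for every point $c\in A$ with $c\leq 2-w$. Lemma~\ref{c lem} bounds $g$ from below. On the other side, \eqref{boost and interval around a} together with \eqref{w ineq} bounds $g$ from above:
\[
1-t+g+\frac{s-t}{2}<\F{a}-\F{a-2}\leq 1-\F{2-w},\qquad\text{so}\qquad g<\frac{3t-s}{2}-\F{2-w}.
\]
The plan is to play these two bounds off against each other. Note that $w\leq 1$, so $2-w\geq 1$ and the range $[1,2-w]$ is non-empty.

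First try $c=1$. Since $1=\min A$, we have $m_A^-(1)=0$, so Lemma~\ref{c lem} gives
\[
g\geq \frac{s+t}{2}-2\F{1+s}.
\]
Comparing with the upper bound yields
\[
s-t<2\F{1+s}-\F{2-w}.
\]
This is only a contradiction if $\F{1+s}$ is small, so in general a better choice of $c$ is needed. I will therefore take
\[
c=\max\big(A\cap[1,2-w]\big).
\]
Then $A\cap(c,2-w]=\varnothing$, and hence
\[
|A\cap[c,c+s]|\leq |A\cap[2-w,\,c+s]|\leq \F{2-w+s}-\F{2-w}.
\]
To control this remaining piece I will use sum-freeness twice. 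The translate $1+(A\cap[1,2-w+s])$ lies in $[2,3-w+s]$ and is disjoint from $A$. The point $a-2+w\in A$ gives that $(a-2+w)+A$ avoids $A$; with $A\cap(a-2+w,a-1]=\varnothing$ this pins down the mass of $A$ near $[1,2-w]$ against the mass near $[a-1+w,a]$.

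The term $m_A^-(c)$ also has to be handled. If $c\ne 1$, I expect Lemma~\ref{mzero} to show $m_A^-(c)=0$ or make it small, via $a+c$ or $b-c$ landing in $A$. Failing that, I will use the crude bound
\[
|A\cap[a+m_A^-(c),b]|\geq \frac{s+t}{2}-m_A^-(c),
\qquad m_A^-(c)\leq \F{c}.
\]
Then I combine $g\geq \frac{s+t}{2}-m_A^-(c)-2|A\cap[c,c+s]|$ with the upper bound on $g$. The $\F{2-w}$ already subtracted in the upper bound should absorb the $\F{c}$ loss, leaving an inequality of the form $s-t<(\text{mass in }[2-w,2-w+s])$. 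This is then contradicted using $t>1/2$ (Corollary~\ref{rough t bound}) and $|A\cap J|\leq 1$ for intervals of length $2$.

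The main obstacle is exactly this bookkeeping: bounding $|A\cap[c,c+s]|$ and $m_A^-(c)$ simultaneously, with no loss, in terms of $\F{2-w}$. I would first test the argument on the extremal period-$3$ configuration to see which of the choices $c=1$ and $c=\max(A\cap[1,2-w])$ is tight. Only then would I decide whether a case split on whether $A$ meets $[2-w,2-w+s]$ is needed.
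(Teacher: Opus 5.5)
Your setup matches the paper's: you choose $c=\max(A\cap[1,2-w])$, apply Lemma~\ref{c lem}, and use the crude bound $m_A^-(c)\le f(c)=f(2-w)$. That bound costs nothing, because it cancels exactly against the $f(2-w)$ in \eqref{w ineq}, so Lemma~\ref{mzero} is not needed. Combining with \eqref{boost and interval around a} then gives
\[
s-t<2|A\cap[c,c+s]|,
\]
with a factor $2$ that your ``inequality of the form'' drops. So everything hinges on showing $|A\cap[c,c+s]|\le (s-t)/2$.

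Here the proposal has a genuine gap. The tools you name for the finish are $t>1/2$ and $|A\cap J|\le 1$ for intervals of length $2$. These only give bounds of constant size, whereas $(s-t)/2$ can be arbitrarily small; it is $0$ when $I\subseteq A$, i.e.\ $s=t$. Nothing in those facts links the mass of $A$ near $2-w$ to the defect $|I\setminus A|$ of the maximizing interval. Your intended use of $a-2+w\in A$ (comparing $[1,2-w]$ with $[a-1+w,a]$) is exactly what already went into \eqref{w ineq}, so it yields nothing new. Your diagnosis of the obstacle is also off: the $m_A^-(c)$ term is free, and the $|A\cap[c,c+s]|$ term must be compared with $I\setminus A$, not with $f(2-w)$.

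The missing idea is to translate the other piece by $a-2+w$. Since $a-2+w\in A$ and $A$ is sum free, the set $(a-2+w)+(A\cap[2-w,2-w+s])$ is disjoint from $A$. Because $(a-2+w)+(2-w)=a$, it lies in $[a,a+s]=I$. Hence
\[
|A\cap[c,c+s]|\le |A\cap[2-w,2-w+s]|\le |I\setminus A|=\frac{s-t}{2},
\]
which gives $s-t<2|A\cap[c,c+s]|\le s-t$, the required contradiction. With this step your argument coincides with the paper's. The $c=1$ attempt and the contemplated case split are then unnecessary.
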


\begin{proof}
    Suppose, for a contradiction, that $y\geq 4-w+s$. Since $0\leq w\leq 1$
    and $1\in A$, the set $A\cap[1,2-w]$ is non-empty. Let $ c=\max(A\cap[1,2-w])$. Then $c\in A$ and $c\leq 2-w$. Moreover, our contradiction assumption gives $ 2-w\leq y-s-2, $
    and hence $c\leq y-s-2$. Therefore,   Lemma \ref{c lem} gives
    $$
        g\geq |A\cap [a+m_A^-(c),b]|-2|A\cap[c,c+s]|.
    $$
    We first estimate the second term. By the maximality of $c$, we have $A\cap(c,2-w]=\varnothing$.
    If $c+s<2-w$, then $(c,c+s]\cap A=\emptyset$, by the maximality of $c$, so the term $|A\cap[c,c+s]|$ is zero and the argument only becomes easier. We may
therefore assume that $c+s\geq 2-w$. Hence, up to endpoints,
$$
    A\cap[c,c+s]\subseteq A\cap[2-w,c+s].
$$
     Since $a-2+w\in A$, the translate
    $$
        (a-2+w)+(A\cap[2-w,c+s])
    $$
    is disjoint from $A$. Moreover, since $c\leq 2-w$, this translate is
    contained in $I=[a,b]$, because
    $$
        (a-2+w)+(2-w)=a
    $$
    and
    $$
        (a-2+w)+(c+s)\leq a+s=b.
    $$
    Hence,  
    $$
        |A\cap[c,c+s]|
        \leq
        |I\setminus A|
        =
        \frac{s-t}{2}.
    $$
    Thus Lemma \ref{c lem} gives
    $$
        g\geq |A\cap [a+m_A^-(c),b]|-(s-t).
    $$
    We now estimate the first term. Since passing from $[a,b]$ to
    $[a+m_A^-(c),b]$ removes an interval of length $m_A^-(c)$, we have
    $$
        |A\cap [a+m_A^-(c),b]|
        \geq
        |A\cap I|-m_A^-(c).
    $$
    Also $m_A^-(c)\leq \F{c}$. Therefore
    $$
        |A\cap [a+m_A^-(c),b]|
        \geq
        \frac{s+t}{2}-\F{c}.
    $$
    Combining the last two estimates, we get
    $$
        g\geq \frac{s+t}{2}-\F{c}-(s-t).
    $$

    Now \eqref{boost and interval around a} gives
    $$
        1-t+g+\frac{s-t}{2}<\F{a}-\F{a-2}.
    $$
    Substituting the lower bound for $g$, we obtain
    $$
        \F{a}-\F{a-2}
        >
        1-t+\frac{s-t}{2}+\frac{s+t}{2}-\F{c}-(s-t)
        =
        1-\F{c}.
    $$
    By the maximality of $c$, we have $\F{c}=\F{2-w}$, and hence
    $$
        \F{a}-\F{a-2}>1-\F{2-w},
    $$
    contradicting \eqref{w ineq}.
\end{proof}
We finally prove that $y< 3+s$ by using the two previous lemmas.
\begin{lem}
    $y < 3 + s$. 
\end{lem}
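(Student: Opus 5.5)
We argue by contradiction: assume $y\geq 3+s$ and combine the lemma $y<4-w+s$ with Lemma \ref{c lem} and inequality \eqref{w ineq}, in the same way as in the proof of $y<4-w+s$. From $3+s\leq y<4-w+s$ we get $w<1$, and also
\[
    y-s-2<2-w,\qquad y-s-2\geq 1 .
\]
So the window $[1,y-s-2]$ on which Lemma \ref{c lem} can be applied is non-empty but now ends strictly before $2-w$. This is the only difference from the previous lemma, where $c$ could be taken up to $2-w$ itself.

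First, let $c=\max(A\cap[1,y-s-2])$. This exists since $1\in A$, and Lemma \ref{c lem} applies to it. As before, $|A\cap[a+m_A^-(c),b]|\geq \frac{s+t}{2}-m_A^-(c)\geq\frac{s+t}{2}-\F{c}$.

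Second, bound $|A\cap[c,c+s]|$ by splitting it at $y-s-2$ and at $2-w$.
\begin{enumerate}[(i)]
\item The part in $(c,y-s-2]$ is empty by the maximality of $c$.
\item The part in $[2-w,c+s]$ translates by $a-2+w\in A$ into $I\setminus A$, as in the preceding proof, so it has measure at most $\frac{s-t}{2}$.
\item The remaining part lies in $(y-s-2,2-w)$, an interval of length less than $1-w$ (in fact less than $4-w+s-y$).
\end{enumerate}
For (iii) I would use a translate by $x-1\in A$ or $x+1\in A$. Note that $x+1-(y-s-2)=a+s+3-\dots$, i.e.\ reflecting through $x\pm1$ sends this range near $[a,b]$ or $[a-2,a-1]$. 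Both of those are regions already controlled: by $|I\setminus A|$, and by the emptiness of $(a-2+w,a-1]$.

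Third, substitute into \eqref{boost and interval around a}. The aim is a lower bound $\F{a}-\F{a-2}>1-\F{c}-(\text{error from (iii)})$, while \eqref{w ineq} gives $\F{a}-\F{a-2}\leq 1-\F{2-w}$. Since $\F{2-w}=\F{c}+|A\cap(y-s-2,2-w]|$, the error term from (iii) has to be charged against the mass of $A$ in $(y-s-2,2-w]$ with coefficient at most $1$. The coefficient $2$ in Lemma \ref{c lem} is the danger here.

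The main obstacle is exactly this step, since a naive bound gives coefficient $2$ and the inequality then fails. My first attempt would be to avoid splitting altogether by choosing $c$ differently. For instance, take $c=\max(A\cap[1,2-w])$ as before and redo the proof of Lemma \ref{c lem} directly for this $c$. The sum-set piece from $[c+s,y-2]$ is then empty or negative in length, so the term $2(\F{y-2}-\F{c+s})$ should be replaced by $0$. We still gain $2\F{c}$ and the translate $c+(A\cap[a+m_A^-(c),b])$, now lying in $[a+1,x-1]$ provided $b+c\leq x-1+\text{(overlap)}$. The overlap beyond $x-1$ has length at most $c+s-(y-2)\leq 4-w+s-y<1$, and it lands in the gap $[x+s-1,x]$ established earlier. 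That gap lies outside $A$ anyway, so the piece should still be harmless. If this bookkeeping closes, the contradiction follows verbatim as in the proof of $y<4-w+s$, giving $\F{a}-\F{a-2}>1-\F{2-w}$ against \eqref{w ineq}.
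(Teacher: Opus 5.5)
Your setup matches the paper's: assume $y\ge 3+s$, take $c=\max(A\cap[1,y-s-2])$, apply Lemma~\ref{c lem}, and play \eqref{boost and interval around a} against \eqref{w ineq}. However, you stop at the one step that carries the argument, the bound on $|A\cap[c,c+s]|$, and leave it as an unresolved obstacle.

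The missing observation is that reflection through $x-1$ maps $[y-s-2,y-2]$ exactly onto $I$, because $x-1-(y-s-2)=b$ and $x-1-(y-2)=a$. (Your expression for $x+1-(y-s-2)$ is garbled; it equals $b+2$.) Since $c\le y-s-2$ gives $c+s\le y-2$, and $A\cap(c,y-s-2]=\emptyset$ by maximality of $c$, all of $A\cap[c,c+s]$ apart from the point $c$ lies in $[y-s-2,y-2]$. As $x-1\in A$ and $A$ is sum free, its reflection lies in $I\setminus A$, so $|A\cap[c,c+s]|\le (s-t)/2$ in one stroke. The factor $2$ in Lemma~\ref{c lem} then costs exactly $s-t$. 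This is absorbed just as in the proof of $y<4-w+s$, giving $f(a)-f(a-2)>1-m_A^-(c)\ge 1-f(c)$. Together with \eqref{w ineq} this yields $f(c)>f(2-w)$, which is impossible since $c\le y-s-2<2-w$. So there is no error term to charge against $A\cap(y-s-2,2-w]$; the apparent loss came only from bounding your pieces (ii) and (iii) separately. Your split would also close if you used the $x-1$ reflection for (iii). The $(a-2+w)$-translate of (ii) ends at $a-2+w+c+s\le x-3+w$, while the reflection of (iii) lies to the right of $x-1-(2-w)=x-3+w$, so the two images are disjoint subsets of $I\setminus A$. Piece (ii) is then superfluous.

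Your fallback with $c=\max(A\cap[1,2-w])$ does not close as sketched. When $c>y-s-2$, the translate $c+(A\cap[a+m_A^-(c),b])$ sticks out of $[a+1,x-1]$ into $(x-1,b+c]$. This interval has length $c-(y-s-2)$ and is \emph{not} inside the gap $[x+s-1,x]$: it begins at $x-1$, and $[x-1-\epsilon,x-1+\epsilon]\subseteq A$. More importantly, the inequality defining $g$ only counts $A+A$ inside $[a+1,x-1]$, so the overflowing mass is simply lost. Enlarging the window to $[a+1,b+c]$ adds $|(x-1,b+c]\setminus A|$ to the right-hand side, which is at least what it recovers. Nothing bounds this loss by the slack $s-t$ left in the rest of the computation. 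The piece $[a+1,a+c+m_A^-(c)]$ from Lemma~\ref{minterv} can overflow in the same way, so calling the overlap ``harmless'' is not justified.
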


\begin{proof}
Suppose, for a contradiction, that $y\geq 3+s$. Since $1\in A$ and
    $y-2-s\geq1$, we may define
    $$
        c=\max(A\cap[1,y-2-s]).
    $$
    Then $c\in A$ and $c\leq y-2-s=y-s-2$, so Lemma \ref{c lem} gives
    $$
        g\geq |A\cap[a+m_A^-(c),b]|-2|A\cap[c,c+s]|.
    $$

    By the maximality of $c$, we have $A\cap(c,y-2-s]=\varnothing$.
If $c+s<y-2-s$, then $(c,c+s]\cap A=\emptyset$, by the maximality of $c$, so the
term $|A\cap[c,c+s]|$ is zero and the argument only becomes easier. We may
therefore assume that $c+s\geq y-2-s$. Hence, up to endpoints,
$$
    A\cap[c,c+s]\subseteq A\cap[y-2-s,y-2].
$$
    Reflection through $x-1$ sends $[y-2-s,y-2]$ onto $I=[a,b]$. Since
    $x-1\in A$ and $A$ is sum free, it sends $A\cap[y-2-s,y-2]$ into
    $I\setminus A$. Therefore
    $$
        |A\cap[c,c+s]|\leq |I\setminus A|=\frac{s-t}{2}.
    $$
    Hence,  
    $$
        g\geq |A\cap[a+m_A^-(c),b]|-(s-t).
    $$
    Since passing from $[a,b]$ to $[a+m_A^-(c),b]$ removes an interval of
    length $m_A^-(c)$, we have
    $$
        |A\cap[a+m_A^-(c),b]|
        \geq
        \frac{s+t}{2}-m_A^-(c).
    $$
    Thus
    $$
        g\geq \frac{s+t}{2}-m_A^-(c)-(s-t).
    $$
 Combining this with \eqref{boost and interval around a}, we get
    $$
        \F{a}-\F{a-2}
        >
        1-t+\frac{s-t}{2}
        +\frac{s+t}{2}
        -m_A^-(c)
        -(s-t)
        =
        1-m_A^-(c).
    $$
    Since $m_A^-(c)\leq \F{c}$, it follows that
    $$
        \F{a}-\F{a-2}>1-\F{c}.
    $$
    On the other hand, by \eqref{w ineq},
    $$
        \F{a}-\F{a-2}\leq 1-\F{2-w}.
    $$
    Therefore,   $\F{c}>\F{2-w}$, and hence $c>2-w$.

    Since $c\leq y-2-s$, this gives
    $$
        y-2-s>2-w,
    $$
    or equivalently
    $$
        y>4-w+s.
    $$
    This contradicts the previous lemma.
\end{proof}
\subsection{The gap around $2$}

We now transfer the information obtained near the discrepancy-maximizing interval to the region around $2$. The goal is to show that this forces a substantial gap there as well.

Since $1\in A$ and $A$ is sum free, the translate
$$
    1+(A\cap[a-1,b-1])
$$
is contained in $[a,b]$ and is disjoint from $A$. Hence,  
$$
    \F{b-1}-\F{a-1}\leq |I\setminus A|=\frac{s-t}{2}.
$$
Moreover, since $b=a+s$ and $s<1$, we have
$$
    b-2<a-1<b-1<a.
$$
Thus
$$
    \F{a}-\F{b-2}
    =
    (\F{a-1}-\F{b-2})
    +(\F{b-1}-\F{a-1})
    +(\F{a}-\F{b-1}).
$$
Again by sum-freeness, the translate
$$
    1+(A\cap[b-2,a-1])
$$
is contained in $[b-1,a]$ and is disjoint from $A$. Therefore
$$
    (\F{a-1}-\F{b-2})+(\F{a}-\F{b-1})
    \leq a-(b-1)=1-s.
$$
This last inequality, combined with the one bounding the amount of mass of $A$ in $[a-1,b-1]$ proved above, gives
$$
    \F{a}-\F{b-2}
    \leq
    1-s+\frac{s-t}{2}.
$$
Combining this with \eqref{boost and interval around a}, we obtain
$$
    \F{b-2}-\F{a-2}
    =
    \F{a}-\F{a-2}-(\F{a}-\F{b-2})
    >
    g+s-t.
$$
We now introduce the nearest points of $A$ to $2$ on either side. Let
$$
u''=\max(A\cap(-\infty,2]),\qquad v''=\min(A\cap[2,\infty)).
$$
Thus $u''$ and $v''$ are the last point of $A$ before $2$ and the first point of $A$ after $2$, respectively. The next lemma gives explicit bounds for these two quantities, and hence for the size of the gap around $2$.
\begin{lem}
    $u'' < 2 - g - \frac{s - t}{2}$, $v'' > 2 + g + \frac{s - t}{2}$ and $v'' - u''\geq s + g$. 
\end{lem}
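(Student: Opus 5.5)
The approach is to exploit the estimate proved just before the statement,
\[
M:=\F{b-2}-\F{a-2}=|A\cap[a-2,b-2]|>g+s-t,
\]
together with sum-freeness. The point is that translating $[a-2,b-2]$ by any point $r$ of $A$ close to $2$ lands essentially on the maximizing interval $I$. Any such translate $r+(A\cap[a-2,b-2])$ lies in $A+A$ and is therefore disjoint from $A$. Inside $I$, however, the complement of $A$ only has measure $|I\setminus A|=(s-t)/2$. So a large mass $M$ near $a-2$ cannot be carried into $I$ by points of $A$ near $2$.

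\emph{First two inequalities.} Take $r\in A$ and write $r=2+\delta$. The translate $r+(A\cap[a-2,b-2])$ is contained in $[a+\delta,b+\delta]$ and is disjoint from $A$. If $|\delta|\le s$, the part of this translate lying in $I$ has measure at least $M-|\delta|$, because it loses at most the portion of length $|\delta|$ that sticks out of $I$. That part lies in $I\setminus A$, so
\[
M-|\delta|\le \frac{s-t}{2},\qquad\text{hence}\qquad |\delta|\ge M-\frac{s-t}{2}>g+\frac{s-t}{2}.
\]
If $|\delta|>s$, the bound $|\delta|>g+(s-t)/2$ should follow trivially, once one checks that $g\le (s+t)/2$ (plausible from the lower bound $\F{a}-\F{a-2}\le 1$ combined with \eqref{boost and interval around a}). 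In either case no point of $A$ lies within $g+\frac{s-t}{2}$ of $2$. Applying this with $r=u''$ gives $u''<2-g-\frac{s-t}2$, and with $r=v''$ gives $v''>2+g+\frac{s-t}{2}$. In particular the case $r=2$ is excluded, since $M>(s-t)/2$ because $t\le s$.

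\emph{Third inequality.} Adding the first two only gives $v''-u''>2g+s-t$, which is not enough in general. So for $v''-u''\ge s+g$ I would run the two-translate covering argument of Lemma \ref{y - 1 gap lem} and of the lemma $v'\ge u'+1$. Suppose that $v''-u''<s+g$. Consider the windows
\[
u''+[a-2,b-2]\qquad\text{and}\qquad v''+[a-2,b-2].
\]
The first meets $I$ in a left segment, and the second meets $I$ in a right segment. Together they cover $I$ except for a gap of length less than $g$. Each actual translate of $A\cap[a-2,b-2]$ misses at most $s-M$ of its window, so the two translates cover more than $s-g-2(s-M)$ of $I$, and all of this lies in $I\setminus A$. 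This contradicts $|I\setminus A|=(s-t)/2$ provided $2M-s-g>(s-t)/2$.

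\emph{Main obstacle.} The bound $M>g+s-t$ only yields the weaker inequality $2M-s-g>g+s-2t$, so the crude count has to be sharpened. I would first try to avoid counting the translates' losses twice. Concretely, I would bound the uncovered part of each window by the mass of $[a-2,b-2]\setminus A$ that actually lands inside $I$, and use the reflection through $x\pm1$ and the inequality $\F{b-1}-\F{a-1}\le (s-t)/2$ to control that mass. If this still fails, I would bring in $\F{a}-\F{a-2}\le 1-\F{2-w}$ from \eqref{w ineq}, or the bound $t>1/2$ from Corollary \ref{rough t bound}, to close the remaining slack. This step is where I expect the real difficulty to lie.
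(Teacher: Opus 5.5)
\paragraph{First two inequalities.} Your argument here is correct. It is the paper's argument read in the opposite direction: the paper translates $A\cap I$ by $-u''$ (resp.\ $-v''$) into a window around $[a-2,b-2]$, while you translate $A\cap[a-2,b-2]$ by $+r$ into $I$. Both just express that no $p\in A\cap[a-2,b-2]$ and $q\in A\cap I$ satisfy $q-p=r$. The case split on $|\delta|$ is unnecessary. The set $[a+\delta,b+\delta]\setminus I$ always has length $\min(|\delta|,s)\le|\delta|$, so $M-|\delta|\le\frac{s-t}{2}$ holds for every $r\in A$.

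\paragraph{Third inequality: the gap.} The third inequality is left unproved, and sharpening other estimates cannot repair your count. Combining \eqref{boost and interval around a} with $\F{a}-\F{a-2}\le 1$ gives $g<\frac{3t-s}{2}$, so $M>g+s-t$ never implies $2M-s-g>\frac{s-t}{2}$. The loss has two sources: you bound the \emph{union} of the two translates by a single copy of $|I\setminus A|$, and you subtract the holes of $A$ in $[a-2,b-2]$ once for each window.

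The missing idea is to bound each translate \emph{separately} by $|I\setminus A|$. The available budget is then $2\cdot\frac{s-t}{2}=s-t$, which is exactly what the hypothesis $M>g+s-t$ is designed to beat. In the paper's formulation, suppose $v''-u''<s+g$. The interval $I-u''=[a-u'',b-u'']$ contains $b-2$ (as $u''\le 2$), and $I-v''=[a-v'',b-v'']$ contains $a-2$ (as $v''\ge 2$). Together they cover $[a-2,b-2]$ apart from a set of measure at most $g$, since the uncovered part lies in an interval of length $v''-u''-s<g$. Hence
\[
M\le g+|A\cap(I-u'')|+|A\cap(I-v'')|\le g+\frac{s-t}{2}+\frac{s-t}{2}=g+s-t,
\]
a contradiction. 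Each term is bounded because $r+(A\cap(I-r))\subseteq I\setminus A$ for $r\in\{u''\!,v''\}$, by sum-freeness.

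In your own direction the same point reads as follows. The sets $I\cap\bigl(u''+(A\cap[a-2,b-2])\bigr)=u''+(A\cap[a-u'',b-2])$ and $I\cap\bigl(v''+(A\cap[a-2,b-2])\bigr)=v''+(A\cap[a-2,b-v''])$ each have measure at most $\frac{s-t}{2}$. Their measures sum to $|A\cap[a-u'',b-2]|+|A\cap[a-2,b-v'']|\ge M-g$. No reflections, no use of \eqref{w ineq}, and no use of $t>1/2$ are needed.
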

\begin{proof}
    Suppose first that $u''\geq 2-g-\frac{s-t}{2}$. Since $u''\leq 2$ and $u''\in A$, we have
    $$
        (A\cap [a,b])-u''
        \subseteq
        \left[a-2, b-2+g+\frac{s-t}{2}\right]\cap A^c .
    $$
    Hence,  
    $$
        \F{b-2}-\F{a-2}
        \leq
        s+g+\frac{s-t}{2}-\frac{s+t}{2}
        =
        s-t+g,
    $$
    contradicting the previously proved inequality
    $$
        \F{b-2}-\F{a-2}>s-t+g.
    $$
     Similarly, suppose that $v''\leq 2+g+\frac{s-t}{2}$. Since $v''\geq 2$ and $v''\in A$, we have
    $$
        (A\cap [a,b])-v''
        \subseteq
        \left[a-2-g-\frac{s-t}{2}, b-2\right]\cap A^c .
    $$
    Hence,  
    $$
        \F{b-2}-\F{a-2}
        \leq
        s+g+\frac{s-t}{2}-\frac{s+t}{2}
        =
        s-t+g,
    $$
    again contradicting
    $$
        \F{b-2}-\F{a-2}>s-t+g.
    $$
    It remains to prove the gap estimate. Suppose, for a contradiction, that
    $$
        v''-u''<s+g.
    $$
    Then the two intervals
    $$
        [a-u'',b-u'']
        \qquad\text{and}\qquad
        [a-v'',b-v'']
    $$
    cover all of $[a-2,b-2]$ except possibly for a set of measure at most $g$. Indeed, the first interval reaches the right endpoint because $u''\leq 2$, while the second reaches the left endpoint because $v''\geq 2$, and the gap between them has length
    $$
        (a-u'')-(b-v'')=v''-u''-s<g.
    $$
    Therefore
    $$
        \F{b-2}-\F{a-2}
        \leq
        g
        + |A\cap [a-u'',b-u'']|
        + |A\cap [a-v'',b-v'']|.
    $$
    Since $u'',v''\in A$, both intervals are translates of $[a,b]$ by elements of $A$, and hence their intersections with $A$ have measure at most $|I\setminus A|=\frac{s-t}{2}$. Thus
    $$
        \F{b-2}-\F{a-2}
        \leq
        g+\frac{s-t}{2}+\frac{s-t}{2}
        =
        g+s-t,
    $$
    again contradicting
    $$
        \F{b-2}-\F{a-2}>s-t+g.
    $$
\end{proof}
\subsection{Forcing $u<2$}

We now show that in fact $u<2$. Arguing by contradiction, we assume throughout this subsection that $u>2$. Note that the case $u=2$ cannot occur, since $u\in A$ and $1\in A$, while $A$ is sum free. The next lemma is the main input: it combines the gap around $y-1$ with the gap around $2$ to give a strengthened lower bound for the portion of $A-A$ contained in $[1,b-1]$.
\begin{lem} \label{double boost lem}
    Assume that $u>2$. Then
    \begin{align*}
        |(A - A)\cap [1, b - 1]|
        &\geq 2\F{b - 1}
        + |A\cap [a, b - m_A^-(u)]|
        + |A\cap [a, b - m_A^-(u'')]|\\
        &\geq 2\F{b - 1} + s + t - m_A^-(u) - m_A^-(u'')\\
        &\geq 2\F{b - 1} + s + t - \F{u} .
    \end{align*}
\end{lem}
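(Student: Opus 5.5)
The plan is to mimic the proof of Lemma \ref{another boost lem}, but with two cut points $u''<u$ in place of the single point $c$. I would split $[1,b-1]$ (viewed through $b-\cdot$) into five consecutive pieces. On the $A-A$ side these are
\[
[b-u''-m_A^-(u''),\,b-1],\quad [b-u''-s,\,b-u''-m_A^-(u'')],\quad [b-u-m_A^-(u),\,b-u''-s],
\]
\[
[b-u-s,\,b-u-m_A^-(u)],\quad [1,\,b-u-s].
\]
They correspond to $A\cap[1,u'']$, the translate $(A\cap[a,b-m_A^-(u''])-u''$, $A\cap[u''+s,u]$, the translate $(A\cap[a,b-m_A^-(u)])-u$, and $A\cap[u+s,b-1]$. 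The interval bounds come from Lemma \ref{minterv}, applied to $[1,u'']$, $[u''+s,u]$ and $[u+s,b-1]$, which give masses $2\F{u''}$, $2(\F{u}-\F{u''+s})$ and $2(\F{b-1}-\F{u+s})$. Since $u,u''\in A$, the two translates lie in $A-A$.

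First I need the hypotheses. We have $m_A^-(b-1)=0$ by Lemma \ref{mzero}, since $b-(b-1)=1\in A$. Also $u+s\leq b-1$: from $y<3+s$ and Lemma \ref{small case} we get $a\geq v\geq u+1$, so $b-1\geq u+s$. For the middle interval estimate I would apply Lemma \ref{minterv} with $\beta=u$. This gives the window $[b-u-m_A^-(u),\,b-u''-s]$, which matches the piece exactly. For the first piece the window ends at $b-u''-m_A^-(u'')$, again matching.

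Summing the five pieces gives
\[
2\F{b-1}-2|A\cap[u,u+s]|-2|A\cap[u'',u''+s]|+|A\cap[a,b-m_A^-(u)]|+|A\cap[a,b-m_A^-(u'')]|.
\]
The loss terms vanish by the gap estimates. We have $v-u\geq 1>s$ by Lemma \ref{y - 1 gap lem}, and $v''-u''\geq s+g\geq s$ by the previous lemma. So $(u,u+s)$ and $(u'',u''+s)$ miss $A$.

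The remaining point, which I expect to be the main obstacle, is ordering and disjointness: I need $u''+s\leq u$, so that the middle piece is a genuine interval and the pieces are disjoint. Since $u>2$, $u\in A$ and $v''=\min(A\cap[2,\infty))$, we get $u\geq v''\geq u''+s+g\geq u''+s$. The endpoint inequalities between consecutive pieces follow from $m_A^-(\cdot)\leq t\leq s$. Finally, $b-1$ is the top endpoint, and $1=b-(b-1)$ is the bottom.

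The second inequality of the lemma uses $|A\cap[a,b-m]|\geq\frac{s+t}{2}-m$, as in the earlier corollary. The third needs $m_A^-(u)+m_A^-(u'')\leq\F{u}$. For this I would take the maximizing interval $[r,u'']$ for $m_A^-(u'')$, which has mass at most $\F{u''}$, and the one $[r',u]$ for $u$. If $r'\geq u''$, then
\[
m_A^-(u)\leq|A\cap[r',u]|\leq\F{u}-\F{u''},
\]
and together with $m_A^-(u'')\leq \F{u''}$ we are done. If instead $r'<u''$, then using $d_A$ additivity and $d_A([u'',u])<0$ across the gap, I would bound $m_A^-(u)\leq m_A^-(u'')$ up to a negative term. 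Combining this with $m_A^-(u'')\leq \F{u''}$ and the mass in $[u'',u]$ should still give the bound. This last bookkeeping is where I would be most careful.
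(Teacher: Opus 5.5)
Your plan is the paper's proof. It uses the same five pieces and the same hypotheses: $m_A^-(b-1)=0$, $u+s\le b-1$ via Lemma \ref{small case} and $v-u\ge 1$, and $u''+s\le v''\le u$. The loss terms on $(u,u+s)$ and $(u''\!,u''+s)$ vanish in the same way, the second inequality is handled identically, and you make the same case split on whether the maximizing interval $[r',u]$ for $m_A^-(u)$ starts to the right or left of $u''$.

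The step that does not close as you sketch it is the case $r'<u''$ of $m_A^-(u)+m_A^-(u'')\le f(u)$. Using only $d_A([u'',u])\le 0$ gives $m_A^-(u)\le m_A^-(u'')$, so the sum is bounded only by $2f(u'')$, which need not be at most $f(u)$. Keeping the negative term but bounding both occurrences of $m_A^-(u'')$ by $f(u'')$ is no better: you then need $f(u'')\le |[u'',u]\setminus A|$, and that does not follow from anything you have.

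The missing ingredient is the bound $m_A^-(u'')\le t\le s\le v''-u''$, which lets the gap $(u'',v'')$ absorb $m_A^-(u'')$ completely:
\[
d_A([r',u])=d_A([r',u''])-(v''-u'')+d_A([v'',u])\le m_A^-(u'')-(v''-u'')+|A\cap[v'',u]|\le f(u)-f(u'').
\]
Here $f(v'')=f(u'')$ because $A\cap(u'',v'')=\varnothing$. So $m_A^-(u)\le f(u)-f(u'')$ holds in both cases. Adding $m_A^-(u'')\le f(u'')$ then finishes the proof, exactly as the paper does.
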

\begin{proof}
     Since we are in the remaining case $y<3+s$, Lemma \ref{small case} gives $a\geq v$. Since $v-u\geq1$, we get
    $$
        b-1=a+s-1\geq v+s-1\geq u+s.
    $$
    Moreover, $u>2$ implies $v''\leq u$. Hence, by the previous lemma,
    $$
        u-u''\geq v''-u''\geq s+g\geq s.
    $$
    Also,
    $$
        \F{u+s}=\F{u}
        \qquad\text{and}\qquad
        \F{u''+s}=\F{u''},
    $$
    because $A$ has no mass in the gaps $(u,u+s)$ and $(u'',u''+s)$.
Since $m_A^-(b-1)=0$, we can apply Lemma \ref{minterv}
to the interval $[u+s,b-1]$ to get
$$
    |(A-A)\cap[1,b-u-s]|
    \geq 2|A\cap[u+s,b-1]|
    =
    2(\F{b-1}-\F{u+s}).
$$
Since $u\in A$, the translate by $u$ gives
    $$
        (A\cap[a,b-m_A^-(u)])-u
        \subseteq
        (A-A)\cap[b-u-s,b-u-m_A^-(u)],
    $$
    and therefore contributes
    $$
        |A\cap[a,b-m_A^-(u)]|.
    $$
    Applying Lemma \ref{minterv} to the interval $[u''+s,u]$ we get
$$
    |(A-A)\cap[b-u-m_A^-(u),b-u''-s]|
    \geq 2|A\cap[u''+s,u]|
    =
    2(\F{u}-\F{u''+s}).
$$
Similarly, since $u''\in A$, the translate by $u''$ gives
    $$
        (A\cap[a,b-m_A^-(u'')])-u''
        \subseteq
        (A-A)\cap[b-u''-s,b-u''-m_A^-(u'')],
    $$
    and therefore contributes
    $$
        |A\cap[a,b-m_A^-(u'')]|.
    $$
        Finally, applying Lemma \ref{minterv} to the interval $[1,u'']$ we get
    $$
        |(A-A)\cap[b-u''-m_A^-(u''),b-1]|
        \geq 2|A\cap[1,u'']|
        =
        2\F{u''}.
    $$
    The intervals just constructed are disjoint, except possibly at endpoints. Indeed, using
    $m_A^-(u),m_A^-(u'')\leq t\leq s$ and $u-u''\geq s$, their endpoints are ordered as
    $$
        1\leq b-u-s\leq b-u-m_A^-(u)
        \leq b-u''-s\leq b-u''-m_A^-(u'')\leq b-1.
    $$
    Summing the contributions gives
    \begin{align*}
        |(A-A)\cap[1,b-1]|
        &\geq
        2(\F{b-1}-\F{u+s})
        + |A\cap[a,b-m_A^-(u)]|\\
        &\quad
        +2(\F{u}-\F{u''+s})
        + |A\cap[a,b-m_A^-(u'')]|
        +2\F{u''}.
    \end{align*}
    Since $\F{u+s}=\F{u}$ and $\F{u''+s}=\F{u''}$, we obtain
    $$
        |(A-A)\cap[1,b-1]|
        \geq
        2\F{b-1}
        + |A\cap[a,b-m_A^-(u)]|
        + |A\cap[a,b-m_A^-(u'')]|.
    $$

    For every $z\in\{u,u''\}$, we have
    $$
        |A\cap[a,b-m_A^-(z)]|
        \geq
        |A\cap[a,b]|-m_A^-(z)
        =
        \frac{s+t}{2}-m_A^-(z).
    $$
    Hence,
    $$
        |(A-A)\cap[1,b-1]|
        \geq
        2\F{b-1}+s+t-m_A^-(u)-m_A^-(u'').
    $$

    It remains to prove that
    $$
        m_A^-(u)+m_A^-(u'')\leq \F{u}.
    $$
    First, for every $r\leq u''$ we have
    $$
        2|A\cap[r,u'']|-(u''-r)\leq \F{u''},
    $$
    and therefore
    $$
        m_A^-(u'')\leq \F{u''}.
    $$
    We claim that
    $$
        m_A^-(u)\leq \F{u}-\F{u''}.
    $$
    Let $r\leq u$. If $r\geq u''$, then
    $$
        2|A\cap[r,u]|-(u-r)
        \leq |A\cap[r,u]|
        \leq \F{u}-\F{u''},
    $$
    where we used only $|A\cap[r,u]|\leq u-r$.

    Suppose now that $r<u''$. Since $A\cap(u'',v'')=\varnothing$, we have
    \begin{align*}
        2|A\cap[r,u]|-(u-r)
        &=
        \bigl(2|A\cap[r,u'']|-(u''-r)\bigr)\\
        &\quad
        -(v''-u'')
        +\bigl(2|A\cap[v'',u]|-(u-v'')\bigr)\\
        &\leq
        m_A^-(u'')-(v''-u'')+|A\cap[v'',u]|.
    \end{align*}
    Since
    $$
        m_A^-(u'')\leq t\leq s\leq v''-u'',
    $$
    it follows that
    $$
        2|A\cap[r,u]|-(u-r)
        \leq |A\cap[v'',u]|
        \leq \F{u}-\F{u''}.
    $$
    Taking the supremum over $r\leq u$, we get
    $$
        m_A^-(u)\leq \F{u}-\F{u''}.
    $$
which gives
    $$
        m_A^-(u)+m_A^-(u'')\leq \F{u}.
    $$
\end{proof}

\begin{lem}
    We have $u < 2$, and therefore $u = u''$ and $v = v''$. 
\end{lem}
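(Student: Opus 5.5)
Assume $u>2$ for a contradiction; $u=2$ is already excluded, since $u\in A$, $1\in A$ and $A$ is sum free. The plan mirrors the proof of $y>2+s$, with Lemma \ref{double boost lem} replacing Corollary \ref{another boost cor}. Since $A$ is sum free, $(A-A)\cap[1,b-1]$ is disjoint from $A\cap[1,b-1]$, so Lemma \ref{double boost lem} gives
$$
3\F{b-1}\leq b-2-s-t+\F{u}.
$$
We know $y<3+s$, so $b=x+1-y+s>x-2$. Hence $b-1>x-3$, and the intervals $[b-1,x-1]$, $[x-1,x+1]$ need to be controlled. For the first, maximality of the discrepancy of $I$ applied to $[b-1,x-1]$ gives $\F{x-1}-\F{b-1}\leq (x-b+t)/2$.

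Next I would bound $\F{x}$ and $\F{x+1}$ in terms of $\F{b-1}$. The available tools are the gap $[u',u'+s]$ around $x$ of length at least $s$ (indeed $v'-u'\geq1$), the estimate $\F{x+1}-\F{x}\leq (x-u'+t)/2$ from Corollary \ref{rough t bound}, and the bound \eqref{x interval mass estimate}. The key new input is the term $\F{u}$ in the displayed inequality, which must be cancelled. Since $u>2$, the gap $(u'',v'')$ lies below $u$, so $\F{u}=\F{u}-\F{v''}+\F{u''}$. The two pieces $\F{u''}\leq 1$ and $\F{u}-\F{v''}$ should be absorbable: $\F{y}-\F{y-2}$ appears with a negative sign in \eqref{x interval mass estimate}, and $u\in(y-2,y-1]$. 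So I would rewrite $\F{u}\leq \F{y-2}+(\F{y}-\F{y-2})$ and use \eqref{direct disc eq} to trade $\F{y-2}$ for $(y-3-g-\F{x-1}+\F{a+1})/2$.

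Adding everything should give $\F{x-1}+\F{x}+\F{x+1}\leq x+E$. I expect $E$ to be a combination of $s-t$, $g$ and $3+s-y$ that is nonpositive once the gap bounds $v''-u''\geq s+g$ and $u''<2-g-(s-t)/2$ are used; that contradicts the choice of $x$. The consequences then follow. If $u<2$, then $u\leq u''$, while $u''\leq 2<y-1$ and $u=\max(A\cap(-\infty,y-1])$ give $u''\leq u$, so $u=u''$. Also $v>u$ and $(u,v)\cap A=\varnothing$ force $v\geq 2$; if $v>2$ then $v=v''$ directly, and if $v=2$ then $2\in A$, which contradicts $1\in A$ and sum-freeness.

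The main obstacle is the bookkeeping in the second paragraph: choosing which estimates to add so that $\F{u}$, $\F{b-1}$ and $g$ cancel exactly. The extremal example is tight, so there is no slack. If the direct combination fails, I would first try to exploit the gap around $2$ once more via the translates $u''+(A\cap I)$ and $v''+(A\cap I)$ inside $[a+2-\cdot,\,b+2]$. Combined with the mass in $[u-1,u]$, this should produce an extra forbidden set of measure $s+t-\F{u''}$ inside $[a,b+2]$.
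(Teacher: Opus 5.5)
Your first step is correct and matches the paper: Lemma~\ref{double boost lem} plus sum-freeness gives $3f(b-1)\le a-2-t+f(u)$. The deduction of $u=u''$, $v=v''$ from $u<2$ is also fine (simply note $v\ge y-1>2$). The gap is in the middle. The step that turns this inequality into a contradiction is only announced, and the cancellation you describe does not close.

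To see this, carry it out with $f(x-1)-f(b-1)\le 1$, which is valid because $[b-1,x-1]$ has length $y-1-s<2$. Write $f(x-1)+f(x)+f(x+1)=3f(x-1)+|A\cap[x-1,x]|+\bigl(f(x+1)-f(x-1)\bigr)$. Then bound $f(u)\le f(y-2)+D$ with $D=f(y)-f(y-2)$, trade $f(y-2)$ via \eqref{direct disc eq}, and cancel $D$ against \eqref{x interval mass estimate}. Using $y>3$, this yields
\[
f(x-1)+f(x)+f(x+1)<x+1-s-t-\tfrac{g}{2}+|A\cap[x-1,x]|-\tfrac12|A\cap[a+1,x-1]|-|A\cap[b,a+1]|.
\]
The mass of $A$ in $[x-1,x]$ enters the target twice, but \eqref{x interval mass estimate} pays for it only once. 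So you would still need $|A\cap[x-1,x]|$ to be at most roughly $s+t-1$. Nothing on your list provides this: the gap around $x$ only gives $|A\cap[x-1,x]|<u+b-x\le s$. Using your discrepancy bound on $[b-1,x-1]$ instead does not help either; for $s=t$ and $y$ near $3$ it leads to the same requirement.

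What is missing is a bound on the mass just below $x$ that absorbs $f(u)$ at the same time. The paper gets it from Brunn--Minkowski applied to $(A\cap[b+1,x])-(A\cap I)$, which lies in $[1,y-1]$ and is disjoint from $A$:
\[
f(y-1)+\bigl(f(x)-f(b+1)\bigr)+\tfrac{s+t}{2}\le y-2 .
\]
Add this to $3f(b+1)\le a+1-t+f(u)$, which follows from your inequality and $f(b+1)-f(b-1)\le1$. Use $f(u)\le f(y-1)$, together with $f(x-1)\le f(b+1)$ and $f(x+1)\le f(b+1)+1$ (both hold because $b>x-2$). The result is $f(x-1)+f(x)+f(x+1)\le x+1-t-\frac{s+t}{2}<x$, since $t>\frac12$ and $s\ge t$.

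This Brunn--Minkowski step requires $A\cap[b+1,x]\neq\varnothing$, so the paper first handles the empty case separately. There $3f(b+1)>x-1$, hence $f(u)>y-3+t$. On the other hand, the gap around $2$ gives $f(u)\le u-1-(v''-u'')\le y-2-s-g$. Together these force $s+t+g<1$, again contradicting $t>\frac12$. Your plan contains neither this difference-set estimate nor the case split. Once they are in place, the detour through \eqref{direct disc eq} is unnecessary.
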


\begin{proof}
    For the sake of contradiction, assume that $u>2$. By Lemma \ref{double boost lem}, we have
    $$
        |(A - A)\cap [1, b - 1]|
        \geq 2\F{b - 1} + s + t - \F{u}.
    $$
    Since $(A-A)\cap[1,b-1]$ is disjoint from $A\cap[1,b-1]$, we get
    $$
        2\F{b - 1} + s + t - \F{u} + \F{b-1}\leq b-2.
    $$
    Hence,  
    $$
        3\F{b - 1}\leq b - 2 - s - t + \F{u} = a - 2 - t + \F{u}.
    $$
    Since $\F{b+1}-\F{b-1}\leq1$, this implies
    $$
        3\F{b + 1}\leq a + 1 - t + \F{u}.
    $$
 Note that $A\cap[b+1,x]\neq \varnothing$.
 Suppose, for a contradiction,
that $A\cap[b+1,x]=\varnothing$. Then $\F{x}=\F{b+1}$. Since we are in the
remaining case $y<3+s$, we have
$$
    b+1=x+2-y+s>x-1,
$$
and hence $\F{x-1}\leq \F{b+1}$. Also $\F{x+1}\leq \F{b+1}+1$. Therefore, by
the contradiction assumption,
$$
    x<\F{x-1}+\F{x}+\F{x+1}\leq 3\F{b+1}+1,
$$
so
$$
    3\F{b+1}>x-1=a+y-2.
$$
On the other hand, the estimate just proved gives
$$
    3\F{b+1}\leq a+1-t+\F{u}.
$$
Consequently,
$$
    \F{u}>y-3+t.
$$
Since we are assuming $u>2$, the point $v''=\min(A\cap[2,\infty))$ satisfies
$v''\leq u$. As $A\cap(u'',v'')=\varnothing$ and $v''-u''\geq s+g$, we have
$$
    \F{u}\leq u-1-(v''-u'')\leq u-1-s-g\leq y-2-s-g,
$$
because $u\leq y-1$. Hence,  
$$
    y-3+t<\F{u}\leq y-2-s-g,
$$
which gives
$$
    s+t+g<1.
$$
This contradicts $t>1/2$ and $s\geq t$. Thus $A\cap[b+1,x]\neq\varnothing$.
By the Brunn--Minkowski inequality applied to
    $A\cap[b+1,x]$ and $A\cap[a,b]$, we have
    $$
        |(A\cap[b+1,x])-(A\cap[a,b])|
        \geq
        |A\cap[b+1,x]|+|A\cap[a,b]|.
    $$
    Moreover,
    $$
        (A\cap[b+1,x])-(A\cap[a,b])
        \subseteq [1,y-1],
    $$
    since $x-a=y-1$. This difference set is disjoint from $A\cap[1,y-1]$, by sum-freeness. Therefore
    $$
        \F{y - 1} + (\F{x} - \F{b + 1}) + (\F{b} - \F{a})\leq y - 2.
    $$
     By summing the two previous inequalities we have,
    $$
        \F{x}+2\F{b+1}
        \leq x-t+\F{u}-\F{y-1}-(\F{b}-\F{a})
        \leq x-t-\frac{s+t}{2},
    $$
    where we used $u\leq y-1$ and $\F{b}-\F{a}=(s+t)/2$. However, since $b+1\geq x-1$, we have
    $$
        \F{x - 1} + \F{x} + \F{x + 1}
        \leq x - t - \frac{s + t}{2} + 1
        \leq x,
    $$
    a contradiction.

Since $y>3$, we have $2<y-1$. Hence,   any point of $A\cap(-\infty,2]$ also lies in $A\cap(-\infty,y-1]$, so $u''\leq u$. On the other hand, $u<2$ and $u\in A$ give $u\leq u''$. Therefore,   $u=u''$. Similarly, there is no point of $A$ in $[2,y-1]$, otherwise it would contradict the definition of $u$; hence, the first point of $A$ to the right of $2$ is the first point to the right of $y-1$, and so $v=v''$.
\end{proof}

\subsection{The final contradiction}
We finally prove that under the obtained assumptions, we have a contradiction.

We first prove that
    \begin{equation}\label{eq:prep-u-small}
        \F{x-1}+\F{a-2}+\F{x+1}\leq x+t+\F{2}-3.
    \end{equation}
    We split the proof into three cases.
\medskip

\noindent\textbf{Case 1.}       First suppose that $v>y$. Since $u<2$ and $y>3$, we have
    $$
        \F{y}=\F{u}=\F{2}.
    $$
    Indeed, $2<y-1<y<v$ and $(u,v)\cap A=\varnothing$.

    By the Brunn--Minkowski inequality,
    $$
        \bigl|(A\cap[v',x+1])-(A\cap[x-1,u'])\bigr|
        \geq
        (\F{x+1}-\F{v'})+(\F{u'}-\F{x-1}).
    $$
    This difference set is contained in $[1,2]$, because $v'\geq u'+1$, and it
    is disjoint from $A$ by sum-freeness. Hence,  
    $$
        (\F{x+1}-\F{v'})+(\F{u'}-\F{x-1})+\F{2}\leq1.
    $$
    Since $A$ has no mass in $(u',v')$, we have $\F{u'}=\F{v'}$, and therefore
    $$
        \F{x+1}-\F{x-1}+\F{2}\leq1.
    $$
    Combining this with \eqref{diff eq},
    $$
        2\F{x-1}+\F{a-2}\leq a-2+t+2\F{y},
    $$
    and using $\F{y}=\F{2}$, gives
    $$
        \F{x-1}+\F{a-2}+\F{x+1}
        \leq
        a-1+t+\F{2}
        =
        x-y+t+\F{2}
        <
        x+t+\F{2}-3,
    $$
    because $y>3$. Thus \eqref{eq:prep-u-small} holds in this case.
\medskip

\noindent\textbf{Case 2.}
        Next suppose that
    $$
        v\leq y,\qquad v+a\geq v'.
    $$
    Since $u<2$, $y>3$, and $v\geq y-1$, we have
    $$
        \F{v}=\F{u}=\F{2}.
    $$
    Since in this case $v\leq y$, while $v\geq y-1$, we have
$
    v-y+2\in[1,2].
$
    Consider the set
    $$
        (A\cap[v',v+a])-(A\cap[x-1,u']).
    $$
    This difference set is contained in
    $$
        [v'-u',v+a-x+1]\subseteq[1,v-y+2],
    $$
    and is disjoint from $A$ by sum-freeness. Hence,
    $$
        (\F{v+a}-\F{v'})+(\F{u'}-\F{x-1})+\F{v-y+2}
        \leq v+a-x.
    $$
    Since $\F{u'}=\F{v'}$, this gives
    $$
        \F{v+a}-\F{x-1}+\F{v-y+2}\leq v+a-x.
    $$

    By Lemma \ref{mzero}, we have $m_A^-(y)=0$, since $a+y=x+1\in A$.
    Therefore,   Lemma \ref{minterv}, applied to $[v,y]$, gives
    $$
        |(A+A)\cap[v+a,x+1]|\geq 2(\F{y}-\F{v}).
    $$
    This sumset is disjoint from $A$, and hence
    $$
        2(\F{y}-\F{v})+(\F{x+1}-\F{v+a})\leq x+1-v-a.
    $$
    Adding the last two inequalities, and using $\F{v}=\F{2}$, gives
    $$
        \F{x+1}-\F{x-1}+\F{v-y+2}+2(\F{y}-\F{2})\leq1.
    $$

    Since $v-u\geq1$, we have
    $$
        \F{v-y+2}\geq \F{u-(y-3)}\geq \F{u}-(y-3)=\F{2}-(y-3).
    $$
    Therefore,
    $$
        \F{x+1}-\F{x-1}+2\F{y}\leq y-2+\F{2}.
    $$
    Combining this with \eqref{diff eq}, we obtain
    $$
        \F{x-1}+\F{a-2}+\F{x+1}
        \leq
        a-2+t+y-2+\F{2}
        =
        x+t+\F{2}-3.
    $$
    Thus \eqref{eq:prep-u-small} holds in this case.
\medskip

\noindent\textbf{Case 3.} 
    It remains to consider the case when
    $$
        v\leq y,\qquad v+a<v'.
    $$
    We first show that in this case one must have
    $$
        y-v\geq s.
    $$
    Suppose instead that $y-v<s$. Since $v\geq y-1$, we have
    $0\leq y-v\leq1$, and
    $$
        v+a=x+1-(y-v).
    $$
    As $y-v<s$, we have $[a,a+y-v]\subseteq I$. Since $v\in A$, the translate
    $$
        v+(A\cap[a,a+y-v])
    $$
    is contained in $[v+a,x+1]$ and is disjoint from $A$. Hence, inside the
    interval $[v+a,x+1]$, whose length is $y-v$, we have
    $$
        |A\cap[a,a+y-v]|+(\F{x+1}-\F{v+a})\leq y-v.
    $$
    On the other hand, $v+a<v'$ and $v+a\geq x$. Since
    $v'=\min(A\cap[x,\infty))$, this gives
    $$
        \F{v+a}=\F{x}.
    $$
    Using $\F{x+1}-\F{x}>1-t$, we obtain
    $$
        |A\cap[a,a+y-v]|<y-v-1+t.
    $$
    Therefore
    $$
        |A\cap[a+y-v,b]|
        >
        \frac{s+t}{2}-(y-v-1+t)
        =
        1-y+v+\frac{s-t}{2}.
    $$
    But $[a+y-v,b]$ has length $s-y+v$, and
    $$
        1-y+v+\frac{s-t}{2}\geq s-y+v
    $$
    because $s+t\leq2$. Thus
    $$
        |A\cap[a+y-v,b]|>s-y+v,
    $$
    which is impossible. Hence,   $y-v\geq s$.

    Since
    $$
        v+a=x+1-(y-v),
    $$
    and $v+a<v'$, while $v+a\geq x$, the set $A$ has no mass in $[x,v+a]$.
    Hence,
    $$
        \F{v+a}=\F{x}.
    $$
    By the previously proved gap lemma, $A$ is disjoint from $[u+b-1,x]$.
    Therefore,
    $$
        u'\leq u+b-1.
    $$
    Since $v-u\geq1$, we have $u\leq v-1$, and so
    $$
        u'\leq u+b-1\leq v+b-2=x+s-(y-v)-1.
    $$
    Since $y-v\geq s$, this gives $u'\leq x-1$. Since $x-1\in A$, we actually
    have $u'=x-1$, and hence
    $$
        \F{x}=\F{x-1}.
    $$
   Moreover, since $v\leq y$ and $v\geq y-1$, we have $0\leq y-v\leq 1$.
Thus
$
    v-y+2=2-(y-v)\in[1,2].
$
Since $\min A=1$, it follows that
$$
    \F{v-y+2}\leq 1-(y-v).
$$
    Consequently,
    $$
        \F{v+a}-\F{x-1}+\F{v-y+2}
        =
        \F{x}-\F{x-1}+\F{v-y+2}
        \leq
        1-(y-v)
        =
        v+a-x.
    $$

    By Lemma \ref{mzero}, we have $m_A^-(y)=0$, since $a+y=x+1\in A$.
    Therefore,   Lemma \ref{minterv}, applied to $[v,y]$, gives
    $$
        |(A+A)\cap[v+a,x+1]|\geq 2(\F{y}-\F{v}).
    $$
    This sumset is disjoint from $A$, and hence
    $$
        2(\F{y}-\F{v})+(\F{x+1}-\F{v+a})\leq x+1-v-a=y-v.
    $$
    Adding the last two inequalities, and using $\F{v}=\F{2}$, gives
    $$
        \F{x+1}-\F{x-1}+\F{v-y+2}+2(\F{y}-\F{2})\leq1.
    $$
    As above, since $v-u\geq1$, we have
    $$
        \F{v-y+2}\geq \F{u-(y-3)}\geq \F{u}-(y-3)=\F{2}-(y-3).
    $$
    Therefore,
    $$
        \F{x+1}-\F{x-1}+2\F{y}\leq y-2+\F{2}.
    $$
    Combining this with \eqref{diff eq}, we obtain
    $$
        \F{x-1}+\F{a-2}+\F{x+1}
        \leq x+t+\F{2}-3.
    $$
    Thus \eqref{eq:prep-u-small} holds in all cases.
\bigskip

We now finish the proof. By \eqref{w ineq} we have
    $$
        \F{a}-\F{a-2}\leq1-\F{2-w}.
    $$
    Moreover, since $a-2+w\in A$, the translate
    $$
        (a-2+w)+(A\cap[2-w,2])
    $$
    is contained in $[a,a+w]$ and is disjoint from $A$. Hence, inside
    $[a,a+1]$, the three sets
    $$
        (a-2+w)+(A\cap[2-w,2]),\qquad
        A\cap[a,b],\qquad
        A\cap[b,a+1]
    $$
    are disjoint up to endpoints. 
    Therefore
    $$
        \F{2}-\F{2-w}+\frac{s+t}{2}+(\F{a+1}-\F{b})\leq1.
    $$
    Together with \eqref{w ineq}, this gives
    $$
        \F{a}-\F{a-2}+\F{2}
        \leq
        2-\frac{s+t}{2}-(\F{a+1}-\F{b}).
    $$
    Adding this to \eqref{eq:prep-u-small} and cancelling the extra $\F{2}$,
    gives
    $$
        \F{a}+\F{x-1}+\F{x+1}
        \leq
        x-1-\frac{s-t}{2}-(\F{a+1}-\F{b}).
    $$
    Since
    $$
        \F{b}-\F{a}=\frac{s+t}{2},
    $$
    it follows that
    $$
        \F{b}+\F{x-1}+\F{x+1}
        \leq
        x-1+t-(\F{a+1}-\F{b}).
    $$
    Equivalently,
    $$
        \F{a+1}+\F{x-1}+\F{x+1}\leq x-1+t.
    $$

    Finally, since $1\in A$ and $A$ is sum free, the translate
    $$
        1+(A\cap[a,b])
    $$
    is contained in $[a+1,b+1]$ and is disjoint from $A$. Hence,
    $$
        \F{b+1}-\F{a+1}\leq\frac{s-t}{2}.
    $$
    Therefore,
    $$
        \F{x-1}+\F{b+1}+\F{x+1}
        \leq
        x-1+t+\frac{s-t}{2}
        =
        x-1+\frac{s+t}{2}
        \leq x.
    $$

    It remains only to replace $\F{b+1}$ by $\F{x}$. Since $y>3$ and $s\leq1$,
    we have
    $$
        x-b-1=y-s-2>0.
    $$
    Hence,   $b+1<x$. Also, since $u<2$, we have
    $$
        b+1>u+b-1.
    $$
    By the previously proved gap lemma, $A$ is disjoint from $[u+b-1,x]$.
    Hence,   $A$ is disjoint from $[b+1,x]$, and therefore
    $$
        \F{x}=\F{b+1}.
    $$
    We conclude that
    $$
        \F{x-1}+\F{x}+\F{x+1}\leq x,
    $$
    contradicting the choice of $x$ and ending the proof of Lemma \ref{key lemma}.

\bibliographystyle{alphaurl}
\bibliography{references}

@article{EberhardGreenManners2014,
  author       = {Sean Eberhard and Ben Green and Freddie Manners},
  title        = {Sets of integers with no large sum-free subset},
  journaltitle = {Annals of Mathematics},
  series       = {2},
  year         = {2014},
  volume       = {180},
  number       = {2},
  pages        = {621--652},
  doi          = {10.4007/annals.2014.180.2.5}
}

@article{Ruzsa1991,
  author       = {Imre Z. Ruzsa},
  title        = {Diameter of sets and measure of sumsets},
  journaltitle = {Monatshefte f{\"u}r Mathematik},
  year         = {1991},
  volume       = {112},
  number       = {4},
  pages        = {323--328},
  doi          = {10.1007/BF01351772}
}

@article{deRoton2018,
  author       = {Anne de Roton},
  title        = {Small sumsets in $\mathbb{R}$: full continuous $3k-4$ theorem, critical sets},
  journaltitle = {Journal de l'{\'E}cole polytechnique -- Math{\'e}matiques},
  year         = {2018},
  volume       = {5},
  pages        = {177--196},
  doi          = {10.5802/jep.67}
}

@article{FigalliJerison2015,
  author       = {Alessio Figalli and David Jerison},
  title        = {Quantitative stability for sumsets in $\mathbb{R}^n$},
  journaltitle = {Journal of the European Mathematical Society},
  year         = {2015},
  volume       = {17},
  number       = {5},
  pages        = {1079--1106},
  doi          = {10.4171/JEMS/527}
}

@online{GreenOpenProblems,
  author  = {Ben Green},
  title   = {100 Open Problems},
  year    = {2018},
  url     = {https://people.maths.ox.ac.uk/greenbj/papers/open-problems.pdf},
}

@article{vanHintumKeevash2026,
  author       = {Peter van Hintum and Peter Keevash},
  title        = {Locality in sumsets},
  journaltitle = {Advances in Mathematics},
  year         = {2026},
  volume       = {485},
  pages        = {110727},
  doi          = {10.1016/j.aim.2025.110727}
}

@article{Freiman1959,
  author       = {G. A. Freiman},
  title        = {The Addition of Finite Sets. I},
  journaltitle = {Izv. Vyssh. Uchebn. Zaved. Matematika},
  year         = {1959},
  number       = {6 (13)},
  pages        = {202--213},
  note         = {In Russian}
}

@misc{JingMudgal2026,
  author       = {Yifan Jing and Akshat Mudgal},
  title        = {A structure theorem for sets with doubling {$4+\delta$}},
  year         = {2026},
  eprint       = {2604.25893},
  eprinttype   = {arXiv},
  eprintclass  = {math.NT}
}

@article{Kneser1956,
  author       = {Martin Kneser},
  title        = {Summenmengen in lokalkompakten abelschen Gruppen},
  journaltitle = {Mathematische Zeitschrift},
  year         = {1956},
  volume       = {66},
  pages        = {88--110},
  doi          = {10.1007/BF01186598}
}

@article{Gardner2002,
  author       = {Richard J. Gardner},
  title        = {The {Brunn--Minkowski} inequality},
  journaltitle = {Bulletin of the American Mathematical Society},
  year         = {2002},
  volume       = {39},
  number       = {3},
  pages        = {355--405},
  doi          = {10.1090/S0273-0979-02-00941-2}
}

@incollection{Erdos1965,
  author    = {Paul Erd{\H{o}}s},
  title     = {Extremal problems in number theory},
  booktitle = {Theory of Numbers},
  series    = {Proceedings of Symposia in Pure Mathematics},
  volume    = {8},
  year      = {1965},
  pages     = {181--189},
  publisher = {American Mathematical Society},
  address   = {Providence, RI}
}

@article{Bourgain,
  author       = {Jean Bourgain},
  title        = {Estimates related to sumfree subsets of sets of integers},
  journaltitle = {Israel Journal of Mathematics},
  year         = {1997},
  volume       = {97},
  pages        = {71--92},
  doi          = {10.1007/BF02774027}
}

@online{Bedert,
  author       = {Benjamin Bedert},
  title        = {Large sum-free subsets of sets of integers via {$L^1$}-estimates for trigonometric series},
  year         = {2025},
  eprint       = {2502.08624},
  eprinttype   = {arXiv},
  eprintclass  = {math.NT}
}
\end{document}